\DeclareMathOperator{\CAT}{CAT}
\newcommand{\Q}{\mathcal Q}
\newcommand{\RR}{\mathbb R}
\newcommand{\diam}{\mbox{\rm diam}}
\newcommand{\C}{\mathcal C}
\newcommand{\interior}{\mbox{\rm int}}
\newtheorem{theorem}{Theorem}
\newtheorem{defn}[theorem]{Definition}
\newtheorem{definition} [theorem] {Definition}
\newtheorem{corollary} [theorem] {Corollary}
\newtheorem{claim} [theorem] {Claim}
\newtheorem{proposition} {Proposition}
\newtheorem{lemma} {Lemma}
\newcommand{\integers}{{\mathbb Z}}
\newcommand{\natls}{{\mathbb N}}
\newcommand{\reals}{{\mathbb R}}
\newcommand{\makefig}[3]{
        \begin{figure}[htbp] \refstepcounter{figure} \label{#2}
        \begin{center}~ #3~\\ \medskip {Figure \thefigure.  #1}
        \end{center} \medskip \end{figure} }
\newenvironment{pf*}[1]{%
 \begin{proof}[#1]%
}{ \end{proof} }
\renewcommand{\bold}[1]{\medskip \noindent {\bf \boldmath #1
                        }\nopagebreak[4]}
\newcommand{\bdry}{\partial}
\newcommand{\closure}{\overline}
\newcommand{\disjunion}{\sqcup}
\newcommand{\st}{\; | \;}         
\newcommand{\zed}{\integers}
\newcommand{\Teich}{\mbox{\rm Teich}}
\newtheorem*{namedtheorem}{\theoremname}
\newcommand{\theoremname}{testing}
\newcommand{\calC}{{\mathcal C}}
\newcommand{\calH}{{\mathcal H}}
\newcommand{\calL}{{\mathcal L}}
\newcommand{\calM}{{\mathcal M}}
\newcommand{\calN}{{\mathcal N}}
\newcommand{\calW}{{\mathcal W}}
\newcommand{\ml}{{\calM \calL}}
\begin{document}


\title{
Coarse and synthetic Weil-Petersson geometry:
quasi-flats, geodesics,  and \\ relative hyperbolicity
}

\author{Jeffrey Brock and Howard Masur\thanks{Both authors gratefully
acknowledge the support of the NSF.   \newline \indent AMS 2000 Subject Classification: Primary 30F60; Secondary
20F67}}

\maketitle

\begin{abstract}
We analyze the coarse geometry of the Weil-Petersson metric on
Teichm\"uller space, focusing on applications to its synthetic
geometry (in particular the behavior of geodesics).  
We settle the
question of the strong relative hyperbolicity of the Weil-Petersson
metric via consideration of its coarse quasi-isometric model, the {\em
pants graph}.  
We show that in dimension~3 the pants graph is strongly
relatively hyperbolic with respect to naturally defined product
regions and show any quasi-flat lies a bounded distance from a single
product.  For all higher dimensions there is no non-trivial collection
of subsets with respect to which it strongly relatively hyperbolic;
this extends a theorem of \cite{Behrstock:Drutu:Mosher:thick} in
dimension 6 and higher into the intermediate range (it
is hyperbolic if and only if the dimension is 1 or 2
\cite{Brock:Farb:rank}).  
Stability and relative stability of quasi-geodesics in dimensions up
through 3 provide for a strong understanding of the behavior of
geodesics and a complete description of the $\CAT(0)$-boundary of the
Weil-Petersson metric via curve-hierarchies and their associated {\em
boundary laminations}.

\end{abstract}

\section{Introduction}

The study of the large scale geometry of Teichm\"uller space and has
given rise to new perspectives on Teichm\"uller geometry and dynamics
in recent years, with results of Masur-Minsky
\cite{Masur:Minsky:CCII}, Brock \cite{Brock:wp}, Brock-Farb
\cite{Brock:Farb:rank}, Behrstock-Minsky \cite{Behrstock:Minsky:rank},
Rafi \cite{Rafi:Teich:model}, Wolpert \cite{Wolpert:compl}, and others
giving insight into coarse phenomena that arise in consideration of
various metrics.  Notable among these is the Weil-Petersson metric on
$\Teich(S)$, which carries a convenient coarse description in terms of
combinatorics of pants decompositions of a surface (see
\cite{Brock:wp}).

One feature that is common to virtually all these investigations is
the emergence of obstructions to hyperbolicity (in the sense of
Gromov) in higher dimensional cases -- the Weil-Petersson metric is
Gromov hyperbolic if and only if the Teichm\"uller space has dimension
at most 2 (see \cite{Brock:Farb:rank}).  When a space fails to be
Gromov hyperbolic, the lack of stability properties familiar in
negative curvature impedes an immediate understanding of the behavior
of geodesics and quasi-geodesics.

Nevertheless, the notion of {\em strong relative hyperbolicity} allows
for similar control of quasi-geodesics up to their behavior in certain
well defined regions that are coarsely isolated from one another.  In this
paper we flesh out precisely in which cases the Weil-Petersson metric
exhibits such strong relative hyperbolicity.

It is
important to note that understanding the coarse, large scale structure
of a metric space can lead to a precise understanding of fine
structure in the setting of $\text{CAT} (0)$ and $\text{NPC}$
(non-positively curved) geometry.  As an example, our methods provide
for a complete description of the $\text{CAT} (0)$ boundary of the
Weil-Petersson metric up through Teichm\"uller spaces of dimension $3$,
originating out of a purely coarse combinatorial model.

\medskip

Let $S = S_{g,n}$ be a compact surface of genus $g$ with $n$ boundary
components.  We define the {\em complexity} $\zeta(S)$ of $S$ to be
the integer $3g-3+n$, namely, the complex dimension of the
corresponding Teichm\"uller space $\Teich(S)$, or the {\em
Teichm\"uller dimension} of $S$.  The initial focus of the paper will
be on case $(g,n) \in \{(2,0), (1,3), (0,6) \}.$  We say a
curve $\gamma$ is {\em domain separating} if $S\setminus\gamma$ has
two components neither of which is a three-holed sphere.

A domain separating curve $\gamma$ on $S$ determines a set $X_\gamma
\subset P(S)$ consisting of pants decompositions that contain
$\gamma$.  When $\zeta(S) = 3$ and $\gamma$ is domain separating, the
set $X_\gamma$ naturally decomposes as a product of Farey-graphs, each
naturally the pants graph on the complementary one-holed torus or four
holed sphere in $S\setminus \gamma$.  We show the following.

\begin{theorem}
\label{thm:rel}
Let $S = S_{g,n}$ where $(g,n) \in \{ (2,0), (1,3), (0,6)\}$.  Then
the pants graph $P(S)$ is strongly relatively hyperbolic relative to
the sets $X_\gamma$ where $\gamma$ ranges over all domain separating curves
in $S$.
\end{theorem}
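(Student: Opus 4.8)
The plan is to establish strong relative hyperbolicity by verifying a standard package of conditions, in the form used by Bowditch, Farb, and Dru\c{t}u--Sapir: (i) the \emph{electrified} pants graph $\widehat P(S)$, obtained from $P(S)$ by adjoining, for each domain separating curve $\gamma$, a cone vertex adjacent to every pants decomposition in $X_\gamma$, is Gromov hyperbolic; (ii) distinct regions are coarsely isolated, i.e.\ for each $r$ there is $D(r)$ with $\diam\big(N_r(X_\gamma)\cap N_r(X_{\gamma'})\big)\le D(r)$ whenever $\gamma\neq\gamma'$; and (iii) a bounded-penetration / no-backtracking property holds for $\{X_\gamma\}$ along quasigeodesics. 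The engine throughout is the Masur--Minsky hierarchy machinery together with the subsurface-projection distance formula $d_{P(S)}(P,Q)\asymp\sum_{W}[\,d_W(P,Q)\,]_K$, the sum over essential non-annular subsurfaces of $S$.

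First I would record the structural facts special to $\zeta(S)=3$. If $\gamma$ is domain separating then $S\setminus\gamma=W_1\sqcup W_2$ with $\zeta(W_1)=\zeta(W_2)=1$, so each $W_i$ is a one-holed torus or four-holed sphere, $P(W_i)$ is a Farey graph, and the obvious inclusion identifies $X_\gamma$, with its internal path metric, with the $\ell^1$ product $P(W_1)\times P(W_2)$; via the distance formula this inclusion is a quasi-isometric embedding, and (as for all such product regions) $X_\gamma$ is uniformly quasiconvex in $P(S)$. The decisive combinatorial point is that two \emph{distinct} domain separating curves in a complexity-$3$ surface must intersect: a curve disjoint from $\gamma$ lies inside some $W_i$, but every essential curve in an $S_{1,1}$ or $S_{0,4}$ is either non-separating in $S$ or cuts off a pair of pants, hence not domain separating. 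Equivalently, $S$ contains two disjoint essential non-annular subsurfaces only as the two sides of a single domain separating curve; in particular the only coarse products of two unbounded factors inside $P(S)$ are the sets $X_\gamma$, each a product of two Farey graphs.

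Condition (ii) now follows: if $P,P'\in X_\gamma$ each lie within $r$ of $X_{\gamma'}$, choose $Q\in X_{\gamma'}$ with $d(P,Q)\le r$; since $\gamma'$ crosses $\partial W_i=\gamma$ for some $i$, the projection $\pi_{W_i}(\gamma')$ is defined, and the Lipschitz property of subsurface projection bounds $d_{W_i}(P,P')$, and symmetrically $d_{W_{3-i}}(P,P')$, in terms of $r$ only; the product distance formula on $X_\gamma\cong P(W_1)\times P(W_2)$ promotes this to a bound on $d_{P(S)}(P,P')$. For (i), note that coning off the $X_\gamma$ deletes from the distance formula exactly the terms indexed by the complementary pieces $W_1,W_2$ of domain separating curves --- by the previous paragraph, precisely the subsurfaces possessing a disjoint essential non-annular partner. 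Thus $\widehat P(S)$ carries a distance formula $d_{\widehat P(S)}(P,Q)\asymp\sum_{W\in\mathcal W'}[\,d_W(P,Q)\,]_K$ over an index family $\mathcal W'$ containing no pair of disjoint domains. I would then run the Masur--Minsky hierarchy-path argument: a hierarchy path from $P$ to $Q$ is recorded by a time-ordered sequence of large-domain subsurfaces which, after deletion of the electrified terms, is linearly ordered and shadowed by a geodesic of the curve graph $\mathcal C(S)$; this shows that hierarchy paths are uniform unparametrised quasigeodesics in $\widehat P(S)$ and that the triangles they span are uniformly thin, so $\widehat P(S)$ is Gromov hyperbolic --- the argument is the Masur--Minsky proof of hyperbolicity of $\mathcal C(S)$, carried out relative to the product regions. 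Condition (iii) emerges from the same analysis, the sub-hierarchies carried by $W_1$ and $W_2$ recording the portion of a hierarchy path inside $X_\gamma$ and controlling its entry and exit without backtracking.

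The main obstacle is condition (i): proving that electrifying the product regions removes \emph{every} obstruction to thin triangles. This has two delicate parts. One must make the complexity-$3$ classification airtight --- that no further source of quasi-flats survives, so that $\mathcal W'$ genuinely has no disjoint domains --- which combines Brock--Farb's rank computation with Euler-characteristic bookkeeping for the subsurfaces of $S_{2,0}$, $S_{1,3}$, and $S_{0,6}$. More seriously, the Masur--Minsky tools (hierarchy paths, the distance formula, and above all the Morse/stability property of hierarchy paths) must be transported to the electrified metric $\widehat P(S)$ with uniform constants; a stability lemma for hierarchy paths in $\widehat P(S)$ is the technical core, and once it together with (ii) and (iii) is in hand, strong relative hyperbolicity of $P(S)$ relative to $\{X_\gamma\}$ follows from the cited characterizations.
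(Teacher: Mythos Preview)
Your strategy is a legitimate alternative, but it is genuinely different from the paper's. The paper does \emph{not} pass through hyperbolicity of the electrified space. Instead it works entirely in $P(S)$: for each hierarchy path $\rho$ it constructs, by hand, a coarse projection $\Pi\colon P(S)\to X(\rho)$ (the union of $\rho$ with the Farey products $X_\gamma$ for separated component domains), proves $\Pi$ is coarsely Lipschitz and satisfies an $(a,b,c)$-contraction property, and deduces via the usual Mostow-type argument that $X(\rho)$ is quasiconvex and that all $(K,C)$-quasi-geodesics with given endpoints fellow-travel it. From this ``relative stability'' the paper then verifies the Dru\c{t}u--Sapir axioms (bounded intersection of neighborhoods, quasi-geodesics cross neighborhoods of nearby $X_\gamma$, and the triangle condition $3^*$) directly. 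Your route --- cone off, establish a distance formula for $\widehat P(S)$ over the reduced index set $\mathcal W'$, and run a Masur--Minsky thin-triangles argument --- is more conceptual and closer to the general theory, while the paper's projection construction is more explicit and yields the relative-synchronization statements (closeness of $\Pi(P)$ to $\pi_{g_Z}(P)$ in every component domain $Z$) as a byproduct, which the paper later uses for the $\CAT(0)$-boundary description.

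The main soft spot in your outline is the assertion that coning off the $X_\gamma$ yields a distance formula $d_{\widehat P(S)}\asymp\sum_{W\in\mathcal W'}[d_W]_K$: the upper bound is easy (collapse the separated-domain portions of a hierarchy path), but the lower bound --- that no short $\widehat P(S)$-path can bypass the non-separated contributions --- needs an argument, and this is essentially the ``stability lemma for hierarchy paths in $\widehat P(S)$'' you flag as the technical core. The paper sidesteps this by never leaving $P(S)$: the contraction property of $\Pi$ is what does the work your electrified stability lemma would do. Your combinatorial observation that distinct domain-separating curves in complexity $3$ must intersect is correct and is exactly what drives condition (ii) in both approaches; the paper uses it in the same way.
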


For the purposes of the proof, we will refer to the formulation of
strong relative hyperbolicity  given in \cite{Drutu:Sapir:relative}
and refined in \cite{Drutu:relhyp}.

Roughly speaking, relative hyperbolicity guarantees that by `coning-off' each
$X_\gamma$ to a single point $p_\gamma$ by edges of length one, the
resulting metric is Gromov hyperbolic.  The theorem asserts further
that this relative hyperbolicity is {\em strong} in the sense that the
subsets $X_\gamma$ satisfy the bounded region (or coset) penetration
property (cf. \cite{Farb:bcp}, \cite{Brock:Farb:rank}).

In particular, this condition implies that when two quasi-geodesics in
$P(S)$ begin and end at the same position, they enter and exit uniform
neighborhoods of each $X_\gamma$ within a bounded distance of one
another.  
We give a more formal definition in
section~\ref{section:rel}.

Our considerations have been motivated by the notion of a {\em
hierarchy path}, a particular kind of quasi-geodesic arising out of
the hyperbolicity of the curve complex.  In \cite{Masur:Minsky:CCII},
this transitive family of quasi-geodesics in $P(S)$ is described,
built up from geodesics in the curve complexes of non-annular
essential subsurfaces of $S$.  Such a quasi-geodesic is called a {\em
resolution} of a {\em hierarchy} $H(P_1,P_2)$ connecting $P_1$ and
$P_2$ in $P(S)$.  

In the above cases, given a resolution $\rho \colon [0,n] \to P(S)$ of
a hierarchy $H(P_1,P_2)$ we denote by $X(\rho)$ the union of the image
of $\rho$ and the Farey-graph products $X_{\gamma} = P(W) \times
P(W^c)$ where $\gamma \in \C(S)$ is the common boundary of $W$ and
$W^c$ for which either $W$ or $W^c$ is a `component domain' of
$H(P_1,P_2)$ (see \cite{Masur:Minsky:CCII}).

Then Theorem~\ref{thm:rel} will follow from the following
quasi-convexity result.
\begin{theorem}
For any resolution $\rho$ of the hierarchy $H(P_1,P_2)$, the union
$X(\rho)$ is quasi-convex in $P(S)$.
\label{theorem:main}
\end{theorem}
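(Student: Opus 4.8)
The plan is to exploit the Masur--Minsky theory of hierarchies and their resolutions, together with the distance formula for the pants graph,
\[
d_{P(S)}(x,y)\;\asymp\;\sum_{V}\bigl[\,d_V(x,y)\,\bigr]_K,
\]
the sum over essential non-annular non-pants subsurfaces $V\subseteq S$, where $[t]_K=t$ for $t\ge K$ and $0$ otherwise. Since $\zeta(S)=3$, each such $V$ is $S$ itself or has $\zeta(V)\in\{1,2\}$, the surfaces with $\zeta(V)=1$ (one-holed tori and four-holed spheres, with Farey curve complexes) being exactly the factors of the product regions $X_\gamma=P(W)\times P(W^c)$. Recall $X(\rho)=\rho([0,n])\cup\bigcup_\gamma X_\gamma$, the union over domain-separating curves $\gamma$ such that $W$ or $W^c$ is a component domain of $H=H(P_1,P_2)$. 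I would produce a constant $R$ so that every hierarchy path $\tau$ joining two points $x,y\in X(\rho)$ lies in $N_R(X(\rho))$; the passage from hierarchy paths to arbitrary geodesics is then routine from the distance formula together with the quasi-convexity of the product regions recorded in step (a) below. The argument splits into cases according to whether each of $x,y$ lies on the image of $\rho$ or in some product region.

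Two preliminary facts are isolated first. \textbf{(a) Each $X_\gamma$ is quasi-convex in $P(S)$.} Projecting $z\in P(S)$ to the pants decomposition $P_z:=\gamma\cup\pi_W(z)\cup\pi_{W^c}(z)\in X_\gamma$ and using that projections to subsurfaces of $W$ or of $W^c$ are thereby coarsely matched, while the remaining curves of $P_z$ are all disjoint from $\gamma$, one obtains
\[
d_{P(S)}(z,X_\gamma)\;\asymp\;\sum_{V\not\subseteq W,\;V\not\subseteq W^c}\bigl[\,d_V(z,\gamma)\,\bigr]_K .
\]
Along a hierarchy path between two points $u,v\in X_\gamma$ this quantity is uniformly bounded: for each relevant $V$ one has $\pi_V(u)\approx\pi_V(v)\approx\pi_V(\gamma)$ (again because all curves of $u,v$ other than $\gamma$ are disjoint from $\gamma$), so the shadow of the path to $\C(V)$ --- either the geodesic of $H(u,v)$ in $\C(V)$ when $V$ is a component domain, or a set of bounded diameter otherwise --- is short and stays near $\pi_V(\gamma)$. \textbf{(b) The resolution $\rho$ meets each $X_\gamma$ occurring in $X(\rho)$.} When $W$ (or $W^c$) is a component domain of $H$, throughout the interval of the resolution in which the $\C(W)$-geodesic of $H$ is traversed the curve $\gamma=\partial W$ is one of the pants curves, so $\rho$ enters $X_\gamma$.

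Granting (a) and (b): if $x,y$ both lie on $\rho$, the relevant sub-path of $\rho$ is itself a resolution, so $\tau$ fellow-travels it; if $x,y$ lie in a common $X_\gamma$, we are done by (a). In the remaining cases I would build a comparison path $\sigma$ by concatenation --- from $x$ move inside $X_{\gamma_1}$ to a point of $\rho\cap X_{\gamma_1}$, then along $\rho$ to a point of $\rho\cap X_{\gamma_2}$, then inside $X_{\gamma_2}$ to $y$, with the obvious degenerations when an endpoint already lies on $\rho$ --- each piece of which lies in $X(\rho)$; it then remains to show $\sigma$ is a \emph{uniform} quasi-geodesic, after which $\tau$ fellow-travels $\sigma$ and we conclude. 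This reduces to showing that the subsurface-projection data of the three pieces of $\sigma$ do not cancel, i.e. that $\sum_V[\,d_V(x,y)\,]_K$ dominates the sum of the corresponding quantities over the three pieces up to a uniform error. Here the combinatorics of $H$ enters: the time-order on component domains (equivalently, the Behrstock inequality applied to the geodesics of $H$) forces a subsurface $V$ carrying a large projection along one piece to be either nested in some $W$ with $\partial W=\gamma_i$ --- its contribution then confined to the $X_{\gamma_i}$-piece, with $V$ invisible from $\rho$ --- or (nested in) a component domain of $H$, with its contribution confined to the $\rho$-piece; since $\gamma_i$ lies on $\rho$, these alternatives are coarsely incompatible, which is what rules out cancellation.

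The principal obstacle is precisely this last estimate: proving that the concatenated path through $\rho$ and the product regions is a uniform quasi-geodesic, i.e. controlling the interaction of subsurface projections across the junctions where $\sigma$ enters and leaves a product region. This demands a careful accounting, organized by the linear time-order on the component domains of $H(P_1,P_2)$, of which subsurfaces can carry large projection distances between $x$ and $y$; the quasi-convexity of the individual product regions in (a) is a necessary but comparatively routine input.
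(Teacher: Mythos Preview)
Your approach diverges from the paper's, and it contains a genuine gap at the step you yourself flag as ``principal.'' The paper does \emph{not} argue by producing a comparison path $\sigma\subset X(\rho)$ and invoking fellow-traveling. Instead it builds a coarsely Lipschitz, coarsely idempotent projection $\Pi\colon P(S)\to X(\rho)$ and proves that $\Pi$ has the $(a,b,c)$-\emph{contraction property}: balls far from $X(\rho)$ have bounded $\Pi$-image. Quasi-convexity then follows from the standard Mostow-type argument (any quasi-geodesic that strays far from $X(\rho)$ is inefficient). The contraction property is exactly the substitute for hyperbolicity that lets one control \emph{arbitrary} quasi-geodesics, not just hierarchy paths.

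The gap in your outline is the inference ``$\sigma$ is a uniform quasi-geodesic, hence $\tau$ fellow-travels $\sigma$.'' That is the Morse lemma, and it fails in $P(S)$ once $\zeta(S)\ge 2$: two hierarchy paths between the same endpoints need not fellow-travel, precisely because of the product regions (one can traverse $W$ then $W^c$, or the reverse, or interleave). Your Case~1 already exhibits this: the claim that a hierarchy path $\tau$ between $\rho(i)$ and $\rho(j)$ fellow-travels the sub-resolution $\rho|_{[i,j]}$ is false as stated; what is true is only that $\tau$ stays near $X(\rho|_{[i,j]})$, which is the very statement you are trying to prove. Showing $\sigma$ is a quasi-geodesic (your ``no cancellation'' estimate) is necessary but not sufficient; to close the argument you would still have to exhibit, for each $\tau(k)$, a point of $X(\rho)$ whose subsurface projections simultaneously match those of $\tau(k)$ to within a uniform constant. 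Carrying that out is tantamount to constructing the relatively-synchronized projection $\Pi$ that the paper builds, and it is where all the work lies.
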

Quasi-convexity of this set guarantees that quasi-geodesics in $P(S)$,
while not stable in the whole of $P(S)$, do satisfy a relative
stability with respect to the product regions $X_\gamma$.

\bold{Rank and quasi-flats.}  A {\em quasi-flat} $F$ in a metric space
$X$ is a quasi-isometric embedding\footnote{The map $F$ is a {\em
quasi-isometric embedding} if $F$ distorts distances by a bounded
additive and multiplicative amount.}  $$F: \reals^n \to X$$ where $n \ge
2$.  The integer $n$ is called the {\em rank} of the quasi-flat.  The
investigations due to \cite{Kleiner:Leeb:rigidity} and
\cite{Eskin:Farb} of quasi-isometric rigidity in the setting of higher
rank symmetric spaces each used the classification of quasi-flats as a
central tool.

Let $\zeta(S) = 3$, and let $\gamma$ be a separating curve for which
each complementary $X_1 $ and $X_2$ satsfies $\zeta(X_i) = 1$.  Then
the Farey graph product $P(X_1)\times P(X_2)$ sits naturally in $P(S)$
as the subset $X_\gamma \subset P(S)$ consisting of pants
decompositions containing the curve $\gamma$.
Theorem~\ref{theorem:main} then allows one to give a classification of
maximal quasiflats for $P(S)$ in the cases when $\zeta(S) = 3$.

\begin{theorem}{\sc (Quasi-Flats Theorem)}
\label{thm:quasiflats}
Let $\zeta(S) = 3$.  Then each quasi-flat $F$ in $P(S)$ has rank $2$
and lies a bounded distance from a product $P(X_1)\times
P(X_2) \subset P(S)$ of Farey-graphs corresponding to the
complementary subsurfaces of a domain-separating curve $\gamma$. 
\end{theorem}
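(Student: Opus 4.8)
The plan is to deduce the theorem from the strong relative hyperbolicity of $P(S)$ (Theorem~\ref{thm:rel}) by the asymptotic-cone method used to classify quasi-flats in higher-rank settings, as in \cite{Kleiner:Leeb:rigidity} and \cite{Eskin:Farb}, here adapted to relatively hyperbolic spaces following \cite{Drutu:Sapir:relative, Drutu:relhyp, Behrstock:Drutu:Mosher:thick}. Fix a quasi-flat $F\colon \reals^n \to P(S)$ with $n \ge 2$. I will show that $F(\reals^n)$ lies in a uniformly bounded neighborhood of a single region $X_\gamma$ with $\gamma$ domain-separating, and that $n=2$.

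\textbf{Coarse containment in a peripheral region.} By Theorem~\ref{thm:rel} together with the characterization of strong relative hyperbolicity in \cite{Drutu:Sapir:relative, Drutu:relhyp}, every asymptotic cone $\mathrm{Cone}_\omega(P(S))$ is tree-graded with respect to the ultralimits of the sets $X_\gamma$. The quasi-isometric embedding $F$ induces in each such cone a bi-Lipschitz embedded copy of $\reals^n$. For $n \ge 2$ the space $\reals^n$ is connected and has no cut point, hence so does its homeomorphic image; since a connected subset of a tree-graded space with no cut point lies in a single piece, the flat maps into a single piece of every asymptotic cone. To transfer this back to $P(S)$ I would combine two observations: first, $F(\reals^n)$ cannot be coarsely covered by two distinct regions $X_\gamma, X_{\gamma'}$, because their coarse intersection is bounded --- a feature of strong relative hyperbolicity --- while $\reals^n$ minus a bounded set is connected for $n\ge 2$; second, $F(\reals^n)$ cannot escape every bounded neighborhood of every $X_\gamma$, for otherwise its image in the $\delta$-hyperbolic graph $\widehat{P(S)}$ obtained by coning off the $X_\gamma$ would be unbounded, which is incompatible with the flat occupying a single piece of each asymptotic cone, given the bounded coset penetration property furnished by Theorem~\ref{thm:rel} (cf. \cite{Farb:bcp}). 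Hence $F(\reals^n) \subseteq N_R(X_\gamma)$ for some $R$ and some $\gamma$; and $\gamma$ is automatically domain-separating, since the peripheral collection consists only of such curves. When $\zeta(S)=3$ a domain-separating $\gamma$ has both complementary pieces of complexity exactly $1$, so $P(X_1)$ and $P(X_2)$ are Farey graphs and $X_\gamma$ is quasi-isometric to $P(X_1)\times P(X_2)$.

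\textbf{Rank.} Since $N_R(X_\gamma)$ is quasi-isometric to a product of two Farey graphs, each of which is Gromov hyperbolic, $F$ yields a quasi-flat in a product of two Gromov-hyperbolic spaces. Every asymptotic cone of such a product is a product of two $\reals$-trees and hence has topological dimension at most $2$; as $\reals^n$ bi-Lipschitz embeds into this cone, $n \le 2$, so $n = 2$. Combining the two parts, $F(\reals^2)$ lies within bounded distance of $X_\gamma = P(X_1)\times P(X_2)$ for a domain-separating curve $\gamma$, which is the assertion of the theorem. (If desired, one can further pin down $F(\reals^2)$ as the bounded neighborhood of a product of two bi-infinite geodesics by appealing to the classification of rank-$2$ quasi-flats in a product of two hyperbolic spaces, but this refinement is not needed here.)

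\textbf{Main obstacle.} The delicate point is the transfer step: upgrading ``contained in a single piece of every asymptotic cone'' to ``contained in a uniformly bounded neighborhood of one fixed $X_\gamma$'' inside $P(S)$ itself. This is exactly where the \emph{strength} of the relative hyperbolicity in Theorem~\ref{thm:rel} --- the bounded coset/region penetration property, which ultimately rests on the quasi-convexity of the sets $X(\rho)$ in Theorem~\ref{theorem:main} --- is indispensable. One must control the excursions of $F$ into the various peripheral regions, obtain uniformity in $R$, and rule out the flat being distributed across infinitely many regions; the last of these is handled by the connectivity of $\reals^n$ minus a bounded set (for $n\ge 2$) together with the bounded coarse intersection of distinct peripheral regions.
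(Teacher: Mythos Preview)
Your approach is valid but genuinely different from the paper's. The paper gives a direct, hands-on argument that avoids asymptotic cones entirely: for a quasi-isometric embedding $\phi\colon \reals^2\to P(S)$, it joins $\phi(-R,0)$ to $\phi(R,0)$ by a hierarchy path $\rho_R$, then observes that the ``detour'' quasi-geodesics $\phi(\sigma_{R,R_1})$ through height $R_1$ must, by Theorem~\ref{theorem:main}, stay in a bounded neighborhood of $X(\rho_R)$ while being far from $\rho_R$ itself---forcing them into a single Farey-graph product $X_\gamma$ (via bounded intersection of distinct products and the entry/exit control of Theorem~\ref{thm:rel}). Letting $R_1$ and then $R$ grow pins the whole plane to one $X_\gamma$. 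The rank bound is handled separately in Corollary~\ref{cor:rank} by intersecting two coordinate $2$-planes. Your route through tree-graded asymptotic cones is the standard higher-level machinery and has the virtue of treating all $n\ge 2$ at once; however, your transfer step is the genuine crux and your sketch of it is not quite right as written. The claim that escaping every bounded neighborhood of every $X_\gamma$ forces unboundedness in the coned-off graph $\widehat{P(S)}$ does not follow, since $F$ composed with the cone-off map is not a quasi-isometric embedding; and the two-region argument does not by itself exclude the image being spread over infinitely many regions. What you actually need is the theorem (Dru\c{t}u--Sapir, see also \cite{Behrstock:Drutu:Mosher:thick}) that any quasi-isometrically embedded unconstricted space in an asymptotically tree-graded space lies in a bounded neighborhood of a single peripheral; citing this would close the gap cleanly. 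The paper's argument buys self-containment and explicit constants tied to Theorem~\ref{theorem:main}; yours buys conceptual clarity and portability, at the cost of importing heavier machinery.
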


The {\em geometric rank} of a metric space $X$ is the maximal positive
integer $n$ for which $X$ admits a quasi-isometric embedding $F \colon
\reals^n \to X$.  As a consequence we obtain the following corollary,
verifying a conjecture of the first author and Farb
\cite{Brock:Farb:rank} in the case $\zeta(S)=3$. We remark that the
following statement has been obtained independently (and for all
surfaces $S$) by work of Behrstock and Minsky (see
\cite{Behrstock:Minsky:rank}).
\begin{corollary}{\sc (Geometric Rank)}
\label{cor:rank}
When $\zeta(S) = 3$ the geometric rank of the pants graph $P(S)$ and
hence the Weil-Petersson metric on $\Teich(S)$ is $2$.
\end{corollary}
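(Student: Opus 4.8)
The plan is to derive Corollary~\ref{cor:rank} as a short consequence of Theorem~\ref{thm:quasiflats}: the substantive content is entirely in the Quasi-Flats Theorem, and what remains is to match the two inequalities defining the geometric rank.

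For the lower bound, I would produce an explicit rank-$2$ quasi-flat. Since $\zeta(S)=3$, there is always a domain-separating curve $\gamma$ whose two complementary subsurfaces $X_1,X_2$ each have complexity one: split $S_{2,0}$ into two one-holed tori, $S_{1,3}$ into a one-holed torus and a four-holed sphere, and $S_{0,6}$ into two four-holed spheres. Then $X_\gamma = P(X_1)\times P(X_2)$ is a product of two Farey graphs sitting inside $P(S)$, and by Theorem~\ref{theorem:main} the set $X_\gamma$ is quasi-convex, hence undistorted, in $P(S)$. Each factor $P(X_i)$ contains a quasi-geodesically embedded line $\ell_i\colon\reals\to P(X_i)$ (for instance the axis of a pseudo-Anosov mapping class of $X_i$ acting on $P(X_i)=\C(X_i)$). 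The map $(s,t)\mapsto(\ell_1(s),\ell_2(t))$ is then a quasi-isometric embedding of $\reals^2$ into $X_\gamma$ with its product metric, and composing with the undistorted inclusion $X_\gamma\hookrightarrow P(S)$ gives a quasi-isometric embedding $\reals^2\to P(S)$. Hence the geometric rank of $P(S)$ is at least $2$.

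For the upper bound, suppose $F\colon\reals^n\to P(S)$ were a quasi-isometric embedding with $n\ge 3$. By definition $F$ is a quasi-flat of rank $n\ge3$, which contradicts Theorem~\ref{thm:quasiflats}, since that theorem asserts every quasi-flat in $P(S)$ has rank exactly $2$. So the geometric rank of $P(S)$ equals $2$; and since $P(S)$ is quasi-isometric to $\Teich(S)$ with the Weil-Petersson metric by \cite{Brock:wp}, and geometric rank is a quasi-isometry invariant, the Weil-Petersson metric on $\Teich(S)$ has geometric rank $2$ as well. No genuine obstacle arises here: the only point needing care is that the product region $X_\gamma$ used in the lower bound is undistorted, which is already supplied by the quasi-convexity in Theorem~\ref{theorem:main}.
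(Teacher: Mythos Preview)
Your lower-bound construction is correct and more explicit than the paper, which simply takes the existence of rank-$2$ quasi-flats as given.

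The upper bound, however, is circular as written. You invoke the clause of Theorem~\ref{thm:quasiflats} asserting that every quasi-flat has rank~$2$, but the paper's proof of that theorem only treats maps $\phi\colon\reals^2\to P(S)$ and establishes the \emph{structural} conclusion that such a map lands near a single $X_\gamma$. The rank-$2$ clause in the statement of Theorem~\ref{thm:quasiflats} is in effect a forward reference to the present corollary, so citing it here begs the question.

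The paper's argument uses only the structural part. Given a hypothetical $\varphi\colon\reals^3\to P(S)$, restrict to the $x$-$y$ and $y$-$z$ coordinate planes; each restriction is a rank-$2$ quasi-flat and hence lies in a bounded neighborhood of some $X_\gamma$, respectively $X_{\gamma'}$. The two planes share the $y$-axis, so uniform neighborhoods of $X_\gamma$ and $X_{\gamma'}$ have infinite-diameter intersection, and the bounded-intersection property of the Farey-graph products then forces $\gamma=\gamma'$. Composing $\varphi$ with nearest-point projection to $X_\gamma$ yields a quasi-isometric embedding of $\reals^3$ into a product of two Farey graphs (hence two trees), which is impossible (the paper cites Kleiner--Leeb). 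This reduction to a product of trees, together with the external rank result ruling out $\reals^3$ there, is the substantive step your argument omits.
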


\bold{The boundary of the Weil-Petersson metric.}
Our main theorem has applications for understanding the $\CAT(0)$
geometry of the Weil-Petersson metric.

The Weil-Petersson metric on the Teichm\"uller space $\Teich(S)$ has
negative curvature, but it is not complete.  Its completion
$\closure{\Teich(S)}$ has the structure of a $\CAT(0)$ space, namely, a
geodesic metric space $X$ in which pairs of points on edges of a
geodesic triangle have distance at most that of the distance between
corresponding points on a triangle in Euclidean space.

It is shown in \cite{Brock:nc} that the unit
tangent spheres have no natural identification.
However, the notion of an {\em asymptote class} of
infinte geodesic rays is natural and basepoint independent.  When $X$
is Gromov hyperbolic, this $\CAT(0)$ boundary agrees with the usual
Gromov boundary.

As a consequence of Theorem~\ref{theorem:main} and the main result of
\cite{Brock:Farb:rank}, we give a description of the
$\CAT(0)$ boundary of the Weil-Petersson metric when $3g-3+n \le 3$.

We say $\lambda$ is a {\em boundary lamination} if each component
$\lambda'$ of $\lambda$ is a Gromov boundary point of
$\C(S(\lambda'))$ where $S(\lambda')$ represents the minimal
subsurface of $S$ containing $\lambda'$.   There is a natural topology
on boundary laminations, which we formulate in
section~\ref{section:catzero}, but we warn the reader in advance that
it is not continuous with respect to usual topologies on laminations
arising out of consideration of transverse measures.

\begin{theorem}
\label{thm:CAT}
Let $S_{g,n}$ satisfy $3g-3+n \le 3$.  Then  the
$\CAT(0)$ boundary of the Weil-Petersson metric on $\Teich(S)$
is homeomorphic to the space of boundary laminations.
\end{theorem}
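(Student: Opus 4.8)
\textbf{Proof strategy for Theorem~\ref{thm:CAT}.}
The plan is to transfer the problem to the pants graph via Brock's quasi-isometry between $\Teich(S)$ with the Weil--Petersson metric and $P(S)$~\cite{Brock:wp}, to describe geodesic rays by Masur--Minsky hierarchies~\cite{Masur:Minsky:CCII}, and to use Theorem~\ref{theorem:main} to compensate for the failure of quasi-geodesic stability. First I would set up the dictionary between $\CAT(0)$ geodesic rays and hierarchy rays. A geodesic ray $r$ in $\closure{\Teich(S)}$ projects to a uniform quasi-geodesic ray in $P(S)$; by Theorem~\ref{theorem:main} and the relative hyperbolicity of Theorem~\ref{thm:rel}, this projection is \emph{relatively stable}, i.e.\ it lies within bounded Hausdorff distance of a resolution $\rho$ of an infinite hierarchy $H$ except along excursions confined to bounded neighborhoods of product regions $X_\gamma$, and inside the deepest such region its behavior is controlled because each Farey factor is hyperbolic. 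From $H$ one reads off the disjoint essential subsurfaces carrying infinite $\C(\cdot)$-geodesics, together with their Gromov limits, and---when the deepest of these is a factor of some $X_\gamma = P(X_1)\times P(X_2)$---the asymptotic ratio at which $r$ progresses in the two factors; this data is the boundary lamination $\lambda(r)$. The first point to verify is that $\lambda(r)$ depends only on the asymptote class of $r$: asymptotic rays fellow-travel in $\closure{\Teich(S)}$, hence project to quasi-geodesic rays at bounded distance, and relative stability then forces the underlying hierarchy data, and its limits, to agree.

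For surjectivity I would argue by compactness rather than stability: given a boundary lamination $\lambda$, choose Teichm\"uller points $x_n$ whose short-curve and subsurface-projection data run out toward $\lambda$ (using the $\zeta\le 2$ boundary description of \cite{Brock:Farb:rank} inside each subsurface supporting a component of $\lambda$, and prescribing the ratio in the product case); since $\closure{\Teich(S)}$ is a proper $\CAT(0)$ space, the geodesics $[x_0,x_n]$ subconverge to a geodesic ray $r$, and one checks $\lambda(r)=\lambda$ from the construction.

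Injectivity splits according to $\lambda$. If $\lambda$ has a component filling all of $S$, then by relative stability together with the hyperbolicity of \cite{Brock:Farb:rank} in the relevant subsurfaces, any ray $r$ with $\lambda(r)=\lambda$ fellow-travels the essentially unique hierarchy ray with terminal data $\lambda$, so two such rays are asymptotic. If $\lambda$ is supported on proper subsurfaces, rays realizing it are eventually contained in a uniform neighborhood of a single product region $X_\gamma$; the bounded coset penetration property in Theorem~\ref{thm:rel} forces two such rays to enter and leave that neighborhood boundedly close, and inside the corresponding stratum $\closure{\Teich(X_1)}\times\closure{\Teich(X_2)}$---a product of two Gromov-hyperbolic $\CAT(0)$ spaces whose boundaries are the ending-lamination spaces of $X_1,X_2$ by the $\zeta=1$ case---a spherical-join computation shows that two rays with the same limit laminations in the two factors and the same ratio are asymptotic. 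This yields a bijection $\partial_\infty\closure{\Teich(S)}\to\mathcal{BL}$, the space of boundary laminations.

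Finally, for the homeomorphism: with the topology on $\mathcal{BL}$ defined in Section~\ref{section:catzero}---recording convergence of the associated hierarchy and subsurface-projection data, not of transverse measures---a sequence $r_n\to r$ in the cone topology projects to quasi-geodesic rays converging uniformly on compact sets, so the hierarchies agree on ever-longer initial segments and the terminal data converge in $\mathcal{BL}$; the inverse is continuous because the realization in the previous paragraph is uniform. Since $\closure{\Teich(S)}$ is proper, its cone-topology boundary is compact, so a continuous bijection onto the Hausdorff space $\mathcal{BL}$ is a homeomorphism. \emph{The main obstacle} is the product-region analysis underpinning both injectivity and continuity: outside the negatively curved and hyperbolic loci there is no stability of quasi-geodesics, and one must invoke the quasi-convexity of $X(\rho)$ (Theorem~\ref{theorem:main}) to pin the asymptote class of a ray trapped near $X_\gamma$ to the pair of ending laminations in the Farey factors plus their progress-ratio, and to recognize that this assignment---discontinuous for the measured-lamination topology, which is exactly the warning preceding the statement---is continuous for the hierarchy-based topology on $\mathcal{BL}$.
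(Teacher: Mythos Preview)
Your overall architecture---Brock's quasi-isometry, hierarchy rays, the relative stability from Theorem~\ref{theorem:main}, a case split for injectivity, and a compactness extraction for surjectivity---matches the paper's approach closely. However, there is a genuine gap in your surjectivity and homeomorphism steps, and it stems from a false assumption.

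\textbf{The Weil--Petersson completion is not proper.} You invoke properness of $\overline{\Teich(S)}$ twice: once to extract a limiting ray from the segments $[x_0,x_n]$, and once at the end to conclude that a continuous bijection from a compact boundary is a homeomorphism. But $\overline{\Teich(S)}$ is not locally compact. Near a noded surface $Z$ with node $\alpha$, the Dehn twist $\tau_\alpha$ fixes $Z$, so the full orbit $\{\tau_\alpha^n X\}$ of any nearby interior point $X$ lies in the closed ball of radius $d(X,Z)$ about $Z$; these points are all distinct, have fixed $\ell_\alpha>0$, and their twist parameters diverge, so they admit no convergent subsequence. Thus closed balls in $\overline{\Teich(S)}$ need not be compact, and the cone-topology boundary need not be compact either. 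Both appeals to properness fail.

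\textbf{The missing content in surjectivity.} The paper does extract a limiting geodesic, but not via properness: it uses that the base point $X$ lies in the smooth, finite-dimensional interior, so the unit tangent sphere at $X$ is compact, and the initial directions of the segments to the maximal cusps $C_n$ subconverge. The substantive work---which you compress into ``one checks $\lambda(r)=\lambda$''---is to show that the limit $X_\infty$ is an \emph{infinite} ray rather than a finite segment terminating in some stratum $\Teich_\sigma(S)$. The paper rules this out by a genuine argument: it takes a measured-lamination limit $\mu$ of the rescaled pants decompositions $s_nQ_n$, uses Wolpert's convexity of length functions along Weil--Petersson geodesics to bound $\ell_{X_\infty(t)}(\mu)\le 1$ up to the putative terminal time $T$, deduces $i(\gamma,\mu)=0$ for every $\gamma\in\sigma^0$, and then exploits the $\CAT(0)$ structure of the completion and the totally geodesic nature of $\Teich_\sigma(S)$ to force $X_n(T')$ into the $\sigma$-stratum for $T'>T$, contradicting that $X_\infty(T)$ was supposed to be a terminal boundary point of a limit of segments with interior endpoint $X$. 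None of this is automatic, and it is exactly where the non-properness bites: without it, there is no a priori reason the limit cannot collapse to a finite segment. Your proposal needs to supply this argument (or an alternative) and to replace the compactness shortcut for the homeomorphism with a direct proof of continuity of the inverse.
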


We remark that both the notion of a boundary lamination defined here
as well as the appropriate topology on the space of such depend on the
strong characterization of quasi-geodesics of
Theorem~\ref{theorem:main}.  Thus, with our methods, such a discussion
is specific to dimension at most 3.  In a separate paper with Minsky,
we develop a general notion of a lamination associated to a
Weil-Petersson geodesic ray that arises from convexity of length
functions along geodesics (see
\cite{Brock:Masur:Minsky:wp}).

In the present discussion, the principal idea from
Theorem~\ref{theorem:main} that infinite rays in the Weil-Petersson
metric have associated ``infinite hierarchies without annuli'' in the
sense of Masur and Minsky \cite{Masur:Minsky:CCII} gives rise to a
natural way to associate infinite geodesics in the curve complex of
$S$ or its subsurfaces.  Indeed, such the set of curves on $S$ used to
construct such `hierarchies' has infinite diameter in the curve
complex of some subsurface of $Y$ of $S$.  When there is a unique such
$Y$, the hierarchy is essentially determined by the asymptotic data of
a point in the Gromov-boundary $\partial \C(Y)$.  In the case at hand,
the only possibility if there is more than one such $Y$ is that there
are two such subsurfaces $Y$ and $Y^c$, and the hierarchy in question
has infinite diameter in each.  In this case the related rate of
divergence of the geodesic in each factor $\C(Y)$ and $\C(Y^c)$
determine an additional piece of data, the ``slope'' of the divergence
of the ray.  To encode this slope we associate real weights to the two
laminations and projectivize.  We will describe this in more detail in
section~\ref{section:catzero}.

\bold{Thickness and relative hyperbolicity.}
In the paper \cite{Brock:Farb:rank} it was shown that for each $S$
with $\zeta(S) \ge 3$ the pants graph $P(S)$, and thence the
Weil-Petersson metric on 
$\Teich(S)$, is not Gromov hyperbolic.  As the central obstruction to
hyperbolicity is the existence of quasi-isometrically embedded product
regions, one can ask whether a line of reasoning similar to the above
approach to the case $\zeta(S) = 3$ might persist in higher
complexity.

The paper \cite{Behrstock:Drutu:Mosher:thick} takes up this theme in
generality; the notion of a {\em thick metric space} is introduced,
and it is shown that for $\zeta(S)\geq 6$ the pants graph $P(S)$ is
thick.  This condition is equivalent
(\cite{Behrstock:Drutu:Mosher:thick}, \cite{Drutu:Sapir:relative}) to
the failure of strong relatively hypoerbolicity in the sense of
\cite{Drutu:Sapir:relative}.

The argument given in \cite{Behrstock:Drutu:Mosher:thick} for the
thickness of the Weil-Petersson metric on Teichm\"uller space runs
aground in the cases of mid-range complexity, namely $\zeta(S)=4$ and
$5$.  Pushing their approach a bit further we show that our strong
relative hyperbolicity theorem for $\zeta(S) = 3$ is sharp in the
following sense.
\begin{theorem}
\label{thm:thickpantsgraph}
Let $S$ be a surface with $\zeta(S) \ge 4$.  Then pants graph $P(S)$
is not strongly relatively hyperbolic with respect to any co-infinite
collection of subsets. 
\end{theorem}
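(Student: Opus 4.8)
The plan is to show that for $\zeta(S) \geq 4$, the pants graph $P(S)$ is \emph{thick} (in the sense of Behrstock--Drutu--Mosher), since thickness is known to be incompatible with strong relative hyperbolicity with respect to any co-infinite collection of subsets (this equivalence is quoted in the excerpt). The case $\zeta(S) \geq 6$ is already handled in \cite{Behrstock:Drutu:Mosher:thick}, so the real content is the intermediate range $\zeta(S) \in \{4,5\}$. Recall that thickness is established by exhibiting a collection of quasi-isometrically embedded ``unconstricted'' subsets — here the natural product regions $Q(\Delta) = \prod_i P(W_i)$ coming from a multicurve $\Delta$ whose complementary pieces $W_i$ each have positive complexity — such that (i) each product region is itself thick (indeed, a product of two unbounded spaces is unconstricted, hence thick of order $0$), (ii) the product regions coarsely cover $P(S)$, and (iii) the ``thick chain'' condition holds: any two product regions can be connected by a finite chain of product regions in which consecutive members have coarsely infinite-diameter (indeed quasi-isometrically embedded unbounded) intersection.

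\textbf{Key steps.} First I would fix, for each $S$ with $\zeta(S) \geq 4$, a supply of separating multicurves $\Delta$ for which $S \setminus \Delta$ has at least two components each of positive complexity; when $\zeta(S) \geq 4$ one can always split off, say, a one-holed torus or four-holed sphere from a complementary piece that still has complexity $\geq 2$, and crucially the \emph{other} piece also has positive complexity — this is exactly where $\zeta(S) \geq 4$ (rather than $=3$) is used, since it gives room for the complementary factor to itself contain a further essential splitting. Second, I would verify that each $Q(\Delta) = P(W_1) \times \cdots \times P(W_k)$ sits quasi-isometrically in $P(S)$ (this is standard from the coarse product structure in \cite{Brock:wp}, \cite{Masur:Minsky:CCII}) and is unconstricted because it is a product of two or more infinite-diameter spaces — such products have no cut-points in any asymptotic cone, so they are thick of order $0$. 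Third, I would check the covering property: every pants decomposition $P$ lies in some $Q(\Delta)$, e.g. take $\Delta$ to be any separating curve contained in $P$ that splits $S$ into two positive-complexity pieces (possible since $\zeta(S) \geq 2$; one must be slightly careful when $P$ contains no such curve, but then an adjacent pants decomposition does, and $P$ is within distance $1$). Fourth — the chaining step — given two product regions $Q(\Delta)$ and $Q(\Delta')$, I would connect them by elementary moves on the defining multicurves: changing $\Delta$ one curve at a time (or passing through a common refinement), each intermediate step shares with its neighbor a sub-product of the form $P(W) \times (\text{something unbounded})$, hence a quasi-isometrically embedded unbounded intersection. Following the Behrstock--Drutu--Mosher template, this network of product regions with thick overlaps certifies $P(S)$ as thick, hence not strongly relatively hyperbolic relative to any co-infinite collection.

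\textbf{Main obstacle.} The delicate point — and the place where the argument ``pushes their approach a bit further'' — is the intermediate complexities $\zeta(S) = 4, 5$, where a complementary piece after splitting can have complexity as low as $1$ and thus fails to split further into two positive-complexity pieces. The chaining argument must then route through multicurves whose complementary \emph{products} are nontrivial even if no single factor can be internally subdivided; one needs to verify that the graph whose vertices are these product regions and whose edges record unbounded coarse intersection is still connected. Concretely, the hard part will be handling $S_{1,4}$, $S_{0,7}$, $S_{2,1}$ and $S_{0,8}$, $S_{1,5}$, $S_{2,2}$, $S_{3,0}$ (the $\zeta = 4$ and $\zeta = 5$ surfaces): one must check case by case that one can always find a separating curve $\gamma$ with \emph{both} sides of positive complexity (true as soon as $\zeta(S) \geq 2$ and $S$ is not a one-holed torus or four-holed sphere), and that the set of such curves is connected under the relevant moves — this last being a mild variant of connectivity of the relevant ``separating curve complex,'' which holds in these ranges. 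Once connectivity of the product-region network is in hand, the remaining verifications (quasi-isometric embedding, unconstrictedness of products, coarse covering) are routine and follow the blueprint of \cite{Behrstock:Drutu:Mosher:thick}.
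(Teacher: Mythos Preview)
Your outline matches the paper's approach closely: prove $P(S)$ is thick by exhibiting the product regions $X_\gamma$ (quasi-flats over domain-separating curves) as a network of uniformly unconstricted subsets that coarsely cover and can be thickly chained, then invoke the equivalence of thickness with failure of strong relative hyperbolicity from \cite{Drutu:Sapir:relative} and \cite{Behrstock:Drutu:Mosher:thick}. The paper indeed reduces the chaining step to a connectivity statement for $\C_{\rm sep}(S)$, which it proves directly for $(g,n)\in\{(1,4),(1,5),(0,7),(0,8)\}$, and in a modified form (allowing consecutive curves to intersect so long as their complement contains a four-holed sphere) for $S_{2,2}$.

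There is one genuine gap. You list $S_{2,1}$ among the cases to be handled by ``connectivity of the relevant separating curve complex,'' but the paper does \emph{not} establish this for $S_{2,1}$ and instead uses a substantially different argument: it passes to the closed genus-2 surface via puncture-forgetting, invokes Kra's theorem to produce a pseudo-Anosov in the point-pushing subgroup, and uses its orbits to thickly connect the order-$1$-thick sets $\widehat X_\gamma$ (the preimages of $X_\gamma$ under puncture-forgetting). This only yields thickness of order at most $2$ for $S_{2,1}$, and the paper explicitly leaves open whether order $1$ is achievable. So your assertion that the needed connectivity ``holds in these ranges'' is unsubstantiated for $S_{2,1}$; you must either prove $\C_{\rm sep}(S_{2,1})$ is connected (which the authors evidently could not) or supply an alternative argument such as the one above. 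A minor correction: $S_{3,0}$ has $\zeta = 6$, so it is already covered by \cite{Behrstock:Drutu:Mosher:thick} and should not appear in your $\zeta=5$ list.
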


Here, a subset $Y \subset X$ of a metric space $Y$ is {\em
co-infinite} if 
there are points in $X$ at arbitrarily large distance from $Y$ (this
plays the role of the infinite index assumption in the context of groups).
The theorem is a consequence of the fact that in these cases the pants
graph is {\em thick} in the sense of
\cite{Behrstock:Drutu:Mosher:thick}, shown in Theorem~\ref{thm:thick}.

The theorem extends the relevant result of
\cite{Behrstock:Drutu:Mosher:thick} which treats the case $\zeta(S) \ge 6$,
to the intermediate range $\zeta(S) \in \{4,5\}$, and establishes the
failure of strong relative hyperbolicity with respect to any
collection of coinfinite subsets in general for $\zeta(S) \ge 4$.

Theorem~\ref{thm:thickpantsgraph} shows that the precise control of
geodesics using these coarse methods stops in Teichm\"uller
dimension~$3$ ($\zeta(S) = 3$).  Interestingly, the central feature of
these non-relatively-hyperbolic cases is the ability to ``chain
flats'' in a gross sense: roughly speaking one can join any pair of
points without ever leaving a union of quasi-isometrically embedded
copies of $\reals^2$.  While this can be done as early as
dimension~$2$ for the mapping class group, this kind of connectivity
only begins in dimension~$4$.

\bold{Plan of the paper.}  We begin with preliminaries, condensing the
notions required from the coarse geometry of the curve complex into
the manageable formulation of a {\em hierarchy path}, namely, a
particular type of quasi-geodesic in $P(S)$ arising inductively from
the hyperbolicity of the curve complex.  In section~\ref{section:rel}
we show that when $\zeta(S) = 3$ we have relative stability of
quasi-geodesics in $P(S)$.  In section~\ref{section:catzero}, we
deduce applications of this stability result to the finer structure of
the $\CAT(0)$ boundary of the Weil-Petersson metric for $S$ with
$\zeta(S) \le 3$.  Finally, in section~\ref{section:nrh} we exhibit
the ``thickness'' of the pants graph $P(S)$ and thence the
Weil-Petersson metric for $S$ with $\zeta(S) = 4$ and $5$, and discuss
how it follows naturally that the Weil-Petersson metric cannot be
strongly relatively hyperbolic with respect to any collection of
co-infinite subsets for $\zeta(S) \ge 4$.  This final result shows
that hyperbolicity and strong relative hyperbolicity, and the
concomitant synthetic control, are sharp to Teichm\"uller dimension at
most $3$.

\bold{Acknowledgments.}  The authors gratefully acknowledge the
support of the many institutions that have played a role in supporting
this research, including the University of Chicago, and the University
of Texas at Austin.  The second author thanks Brown University for its
hospitality while this work was being completed.  The authors also
thank Jason Behrstock, Cornelia Dru\c{t}u, Benson Farb, Lee Mosher,
and Mark Sapir, for many useful and illuminating conversations and
comments.  This work has some thematic overlap with our forthcoming
joint-project with Yair Minsky \cite{Brock:Masur:Minsky:wp}, whom we
also thank for his continued support and collaboration.

\section{Preliminaries}
\label{section:preliminaries}

In this section we set out some of the preliminary notions we will
need.

\bold{Complexes of curves and pants.}
Let $S$ be a compact surface of negative Euler characteristic.  The
{\em curve complex} of $S$, denoted $\C(S)$, is the simplicial complex
whose vertices correspond to isotopy classes of essential simple
closed curves on $S$ and whose $k$-simplices span collections of $k+1$
vertices whose corresponding simple closed curves can be realized
pairwise-disjointly by simple closed curves on the surface.  We will
be primarily interested in the 1-skeleton of $\C(S)$, often called the
{\em curve graph} of $S$, and the associated distance function
$d_S(.,.)$ on the $0$-skeleton induced by the metric obtained by
assigning each edge length $1$.

A related notion, the {\em pants graph} $P(S)$ associated to $S$ is a
graph whose vertices correspond to {\em pants decompositions}, namely,
maximal families of distinct, essential, non-peripheral isotopy
classes of simple closed curves on $S$ so that the classes in the
family have pairwise disjoint representatives.

In this case, edges connect vertices whose corresponding pants
decompositions differ by an {\em elementary move}: pants
decompositions $P$ and $P'$ differ by an elementary move if $P'$ can
be obtained from $P$ by replacing one isotopy class $\alpha$ in $P$ by
another $\beta$ so that representatives of $\beta$ intersect
representatives of $\alpha$ minimally among all possible choices for
$\beta$.  These moves have two types (see figure~\ref{fig:move}).  In
the first type $P\setminus \alpha$ contains a torus with a hole and
$\beta$ intersects $\alpha$ once. In the second $P\setminus \alpha$
contains a $4$ holed sphere and $\beta$ intersects $\alpha$ twice.

\makefig{Elementary moves on Pants
decompositions.}{fig:move}{\psfig{file=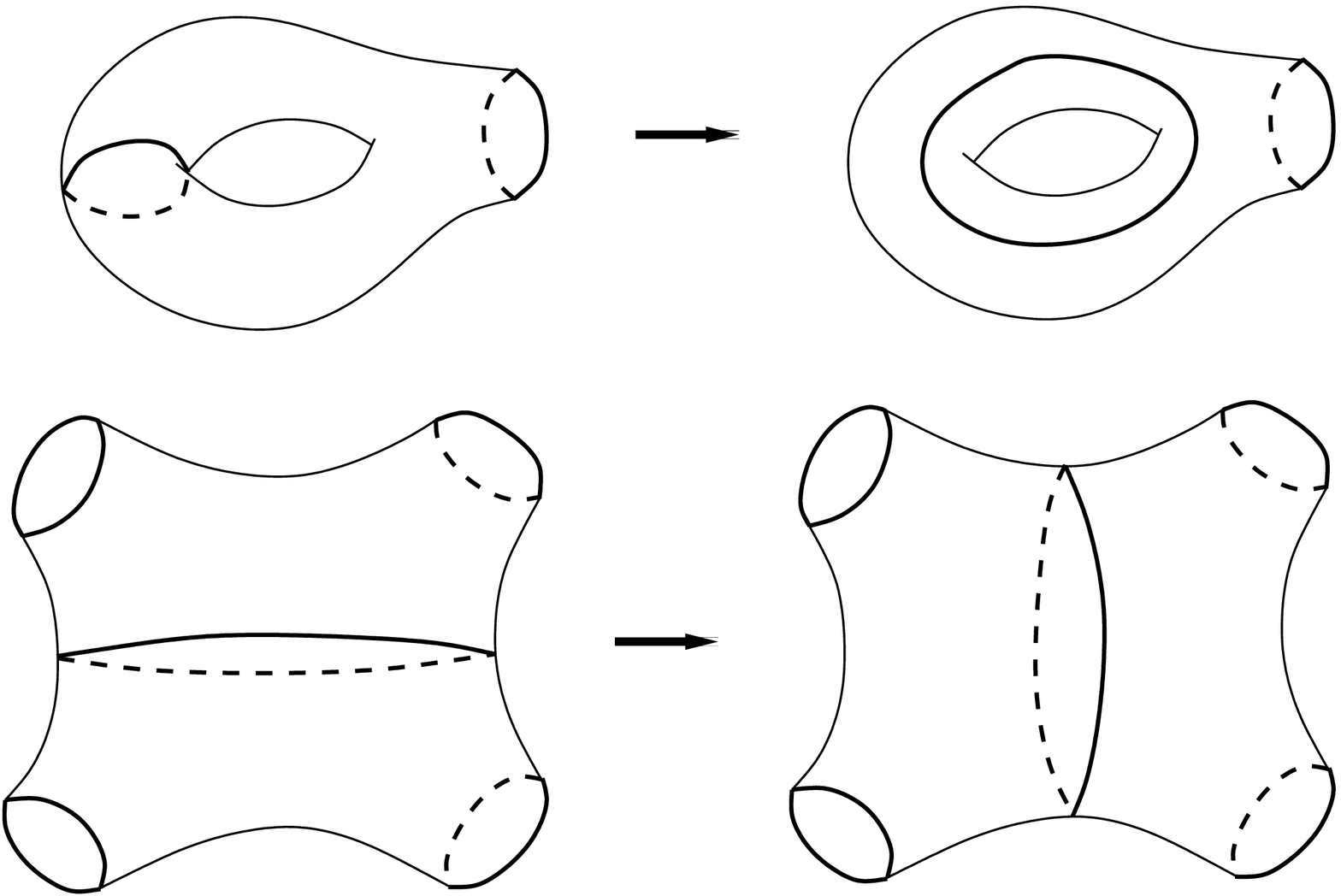,height=1.5in}}

It is a theorem of Hatcher and Thurston that the graph $P(S)$ is
connected (see \cite{Hatcher:Thurston:pants}), and thus there is a
notion of distance between vertices obtained by metrizing $P(S)$ so
that each edge has length $1$.  We denote by $d(\cdot,\cdot)$ this
distance on $P(S)$.

Given a subsurface $Y\subseteq S$ that is not an annulus, work of
Masur and Minsky \cite{Masur:Minsky:CCII} define a {\em coarse
projection} $\pi_Y$ from $\C(S)$ to uniformly bounded diameter subsets
of $\C(Y)$, as follows.

If $\gamma\in\C(S)$ intersects $Y$ essentially, we define
$\pi_Y(\gamma)$ to be the collection of curves in $\C(Y)$ obtained by
resolving the essential arcs of $\gamma\cap Y$ into simple closed
curves.  More precisely, if $X$ is a finite area hyperbolic structure
on $\interior(S)$ chosen for reference, one can consider the arcs of
intersection of the geodesic representative $\gamma^*$ of $\gamma$
with the realization $Y^*$ of $Y$ as a subsurface of $X$ with geodesic
boundary. For each arc $\alpha$ of $\gamma^* \cap Y^*$, denote by
$\alpha^*$ the shortest representative of $\alpha$ modulo the boundary
of $Y^*$ (i.e. up to homotopy with endpoints constrained to $\bdry
Y^*$).  A regular neighborhood of $\alpha^* \cup \bdry Y^*$ will have
boundary consisting of simple closed curves in $Y$, some of which will
be non-peripheral in $Y$.  The union of these non-peripheral ones
makes up the collection $\pi_Y(\gamma) \subset \C(Y)$.  If this
collection is empty, then $\pi_Y(\gamma)$ is defined to be the empty
set.

If $\pi_Y(\gamma)$ is non-empty, it is easy to see that it has
uniformly bounded diameter in $\C(Y)$.  The projection $\pi_Y$ is
defined in exactly the same manner for pants decompositions $P$:
resolve arcs of intersection of the geodesic representatives of $P$
with the geodesic representative of $Y$ into simple closed curves in
$Y$ and record those that are non-peripheral.  Once again, it is easy
to see that $\pi_Y(P)$ is a subset of $\C(Y)$ of uniformly bounded
diameter.

We note for any non-annular subsurface $Y$, at least one curve in a
pants decomposition $P$ must have essential intersection with $Y$, so
one always has $\pi_Y(P) \not= \emptyset$.  For two pairs of pants
$P_1,P_2$ we will use the notation $d_Y(P_1,P_2)$ to denote $$
\diam_{\C(Y)}(\pi_Y(P_1) \cup \pi_Y(P_2)).$$

By a result of Masur and Minsky (see \cite{Masur:Minsky:CCI}), the
curve complex $\C(Y)$ is $\delta$-hyperbolic for some $\delta$.

Let $g_Y$ be a geodesic in $\C(Y)$.

\begin{definition}
For any pants $P$ decomposition, denote by $\pi_{g_Y}(P)$ the nearest
point on the geodesic $g_Y$ of the projection $\pi_Y(P)$.
\end{definition}

It follows from the hyperbolicity of $\C(Y)$ that this map is coarsely
well-defined and that there is a constant $k\geq 1$ such that it is
$k$-Lipschitz.

\begin{definition}
The domain $Y_1$ is {\em nested} in the domain $Y_2$ if $Y_1\subset
Y_2$.  The domains $Y_1$ and $Y_2$ {\em intersect transversely} if
they intersect and are not nested.
\end{definition}

\bold{Hierarchy paths.} We will use a construction in
\cite{Masur:Minsky:CCII} of a class of 
quasi-geodesics in $P(S)$ which we call {\em hierarchy paths} and
which have the following properties.  Given a positive function
$f(x)$, and a real number $M >0$, let
\begin{displaymath}
[f(x)]_M = \left\{
\begin{array}{cc}
f(x) & \ \text{if} \ f(x) \ge M, \ \text{and} \\ 0 & \text{otherwise.}
\end{array}\right.
\end{displaymath}

\begin{definition}{\sc (Hierarchy Paths)}
Any two pants decompositions $P_1$ and $P_2$ can be connected by at
least one hierarchy path $\rho=\rho(P_1,P_2)\colon [0,n]\to P(S)$,
with $\rho(0) = P_1$ and $\rho(n) = P_2$.

These paths have the following properties.
\label{defn:hierarchy:paths}
\begin{enumerate}
\item There is a constant $M_2$ such that if $Y$ is a
subsurface of $S$ with $\zeta(Y) \ge 1$ and $d_Y(P_1,P_2)\geq M_2$
then there is a maximal connected interval of times $I_Y=[t_1,t_2]$
such that for all $t\in I_Y$, $\bdry Y$ is a curve in $\rho(t)$.  We
will call such a subsurface $Y$ a {\em component domain} of $\rho$.
By convention, the full surface $S$ is also a component domain.
\label{component:domain}
\item For each component domain $Y$, there is a geodesic $g_Y(s)$ in
the curve complex $\C(Y)$, for $s$ in a parameter interval $J_Y$, such
that for each $t\in I_Y$, $\rho(t)$ contains a curve in $g_Y(s)$.
Furthermore, the assignment $t\to s(t)$ is a monotonic function from
$I_Y$ to $J_Y$.

\item \label{boundary:projection}
If $Y_1$ and $Y_2$ are component domains that intersect transversely,
then there is a notion $\prec_t$ of {\em time order} of the two
domains which is the same for any hierarchy path joining $P_1$ and
$P_2$.  Time ordering has the property that there is a constant
$M_1\geq M_2$ such that if $Y_1\prec_t Y_2$ then $d_{Y_2}(P_1, \bdry
Y_1)\leq M_1$ and $d_{Y_1}(P_2,\bdry Y_2)\leq M_1$.

\item \label{endpoint:projection}
For any component domain $Y$, if $I_Y=[t_1,t_2]$, then
$$d_Y(\rho(t),\rho(t_1))\leq M_1, \ \ \ \text{if} \ \ t \le t_1$$ and
$$d_Y(\rho(t),\rho(t_2))\leq M_1, \ \ \ \text{if} \ \ t \ge t_2.$$

\item There exist $K'$, and $C$ depending on $M_1$  so that
\begin{equation}
\label{eq:distance}
d(P_1,P_2) \asymp_{K',C} \sum_{
\stackrel{Y \subset S}{Y \ \text{\rm non-annular}}} [d_Y(P_1,P_2)]_M,
\end{equation}
where $\asymp_{K',C}$ denotes equality up to the multiplicative factor
$K'$ and additive constant $C$.
\end{enumerate}
\end{definition}

In
\cite{Masur:Minsky:CCII} it is shown that $\prec_t$ defines a partial
ordering on the component domains of a hierarchy path.  The notion
that two properly intersecting domains $Y_1$ and $Y_2$ are
time-ordered refers to the fact that changes of projections to a pair
of time ordered component domains do not occur simultaneously along a
path but rather sequentially.  We will denote by $\rho(P,P')$ a choice
of a hierarchy path joining $P$ to $P'$ in $P(S)$.
\begin{lemma}{\sc (Time Order)}
\label{lem:time:order} 
Suppose $Y_1$ and $Y_2$ are transversely intersecting domains, and
$P_1,P_2$ are pants decompositions in $P(S)$ that satisfy
\begin{displaymath}
d_{Y_1}(P_1,P_2)\geq 2M_1 \ \ \ \text{and} \ \ \ d_{Y_2}(P_1,P_2)\geq
2M_1.
\end{displaymath}
Suppose $Y_1\prec_t Y_2$ in $\rho(P_1,P_2)$. If $R$ is another pants
decomposition that satisfies
\begin{displaymath}d_{Y_1}(R,P_2)> 2M_1,
\end{displaymath} then
\begin{displaymath}  d_{Y_2}(P_1,R)\leq 2M_1
\end{displaymath} and
\begin{displaymath}|d_{Y_2}(P_1,P_2)-d_{Y_2}(R,P_2)|\leq 2M_1,
\end{displaymath}
and $Y_1\prec_t Y_2$ in any hierarchy path $\rho(R,P_2)$.
\end{lemma}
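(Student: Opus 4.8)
\emph{Proof proposal.} The plan is to deduce all three conclusions from two facts. The first is property~(\ref{boundary:projection}) of hierarchy paths: if $Z_1$ and $Z_2$ are transversely intersecting component domains of a path $\rho(A,B)$ with $Z_1\prec_t Z_2$, then $d_{Z_2}(A,\bdry Z_1)\le M_1$ and $d_{Z_1}(B,\bdry Z_2)\le M_1$. The second is the \emph{Behrstock inequality} (see \cite{Behrstock:Minsky:rank}): for transversely intersecting $Y_1,Y_2$ and any pants decomposition $\mu$ one has $\min\{d_{Y_1}(\mu,\bdry Y_2),\,d_{Y_2}(\mu,\bdry Y_1)\}\le M_1$, where we take $M_1$ large enough to serve simultaneously as the hierarchy constant and the Behrstock constant (permissible, since these constants may be increased at will). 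I will also use freely that $d_Y(\cdot,\cdot)=\diam_{\C(Y)}(\pi_Y(\cdot)\cup\pi_Y(\cdot))$ obeys the triangle inequality through any nonempty intermediate projection.

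The first thing I would establish is the single estimate from which everything flows, namely $d_{Y_1}(R,\bdry Y_2)>M_1$. Since $Y_1\prec_t Y_2$ in $\rho(P_1,P_2)$, property~(\ref{boundary:projection}) gives $d_{Y_1}(P_2,\bdry Y_2)\le M_1$; together with the hypothesis $d_{Y_1}(R,P_2)>2M_1$ and the triangle inequality in $\C(Y_1)$ this yields $d_{Y_1}(R,\bdry Y_2)\ge d_{Y_1}(R,P_2)-d_{Y_1}(P_2,\bdry Y_2)>M_1$. Feeding this into the Behrstock inequality with $\mu=R$ forces $d_{Y_2}(R,\bdry Y_1)\le M_1$. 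On the other hand $Y_1\prec_t Y_2$ in $\rho(P_1,P_2)$ also gives $d_{Y_2}(P_1,\bdry Y_1)\le M_1$; and since $Y_1,Y_2$ intersect transversely, $\bdry Y_1$ meets $Y_2$ essentially, so $\pi_{Y_2}(\bdry Y_1)\neq\emptyset$ and the triangle inequality in $\C(Y_2)$ gives $d_{Y_2}(P_1,R)\le d_{Y_2}(P_1,\bdry Y_1)+d_{Y_2}(\bdry Y_1,R)\le 2M_1$. That is the first conclusion, and the second follows at once: from $d_{Y_2}(P_1,P_2)\le d_{Y_2}(P_1,R)+d_{Y_2}(R,P_2)$ and $d_{Y_2}(R,P_2)\le d_{Y_2}(R,P_1)+d_{Y_2}(P_1,P_2)$ one gets $|d_{Y_2}(P_1,P_2)-d_{Y_2}(R,P_2)|\le d_{Y_2}(P_1,R)\le 2M_1$.

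For the time-order assertion I would argue by elimination. Since $d_{Y_1}(R,P_2)>2M_1\ge M_2$, the domain $Y_1$ is a component domain of every hierarchy path $\rho(R,P_2)$ by property~(\ref{component:domain}). Assuming $Y_2$ is also a component domain of $\rho(R,P_2)$ — the only case in which a time order of the two is defined — property~(\ref{boundary:projection}) and the transverse intersection of $Y_1$ and $Y_2$ force exactly one of $Y_1\prec_t Y_2$ or $Y_2\prec_t Y_1$ to hold in $\rho(R,P_2)$. But $Y_2\prec_t Y_1$ in $\rho(R,P_2)$ would give $d_{Y_1}(R,\bdry Y_2)\le M_1$ by property~(\ref{boundary:projection}), contradicting the estimate above; hence $Y_1\prec_t Y_2$ in $\rho(R,P_2)$, and this order is independent of the chosen path by property~(\ref{boundary:projection}).

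I do not expect a genuine obstacle: once property~(\ref{boundary:projection}) and the Behrstock inequality are in hand, the proof is a short chase through subsurface projections and triangle inequalities. The only points requiring care are bookkeeping ones — arranging that the one constant $M_1$ dominates both the hierarchy threshold and the Behrstock constant, and observing that the factor $2$ in the hypotheses $d_{Y_i}(P_1,P_2)\ge 2M_1$ and $d_{Y_1}(R,P_2)>2M_1$ is precisely the slack needed to clear the threshold $M_1$ of property~(\ref{boundary:projection}) in the steps above. One should also note that for the time-order conclusion to be non-vacuous one wants $d_{Y_2}(R,P_2)\ge M_2$ so that $Y_2$ is indeed a component domain of $\rho(R,P_2)$; this holds in the intended applications, where $d_{Y_2}(P_1,P_2)$ is large, by the second conclusion.
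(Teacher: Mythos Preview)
Your argument is correct. The difference from the paper's proof is one of logical order and external input. You import the Behrstock inequality to obtain $d_{Y_2}(R,\bdry Y_1)\le M_1$ directly, then read off the two distance conclusions, and only afterwards deduce the time order in $\rho(R,P_2)$ (conditionally on $Y_2$ being a component domain there). The paper instead stays entirely within the hierarchy-path axioms of Definition~\ref{defn:hierarchy:paths}: it first argues by contradiction that $Y_2$ must be a component domain of $\rho(R,P_2)$ with $Y_1\prec_t Y_2$ --- since otherwise one would have $d_{Y_1}(R,\bdry Y_2)\le M_1$, which combined with $d_{Y_1}(\bdry Y_2,P_2)\le M_1$ forces $d_{Y_1}(R,P_2)\le 2M_1$ --- and only then applies property~(\ref{boundary:projection}) in $\rho(R,P_2)$ to extract $d_{Y_2}(\bdry Y_1,R)\le M_1$ and conclude. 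Your route is a little quicker but invokes an outside result and requires enlarging $M_1$ to absorb the Behrstock constant; the paper's route is self-contained and, as a byproduct, shows unconditionally that $Y_2$ is a component domain of $\rho(R,P_2)$, which you flag as an extra hypothesis in your final paragraph.
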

\begin{proof}

Since $Y_1\prec_t Y_2$ in a hierarchy path $\rho(P_1,P_2)$ we have
$$d_{Y_1}(\bdry Y_2,P_2)\leq M_1.$$ Since $2M_1>M_2$, the subsurface
$Y_1$ is a component domain of a hierarchy path from $R$ to $P_2$.
Arguing by contradiction suppose $Y_2$ is not a component domain of
this latter path, or if it is, suppose $Y_2\prec_t Y_1$.  By the
fourth property of hierarchies we would have $$d_{Y_1}(R,\bdry
Y_2)\leq M_1.$$ The triangle inequality then gives
$$d_{Y_1}(R,P_2)\leq 2M_1,$$ a contradiction to the assumption.

Thus $Y_2$ is a component domain in a hierarchy path $\rho(R,P_2)$ and
we have $Y_1\prec_t Y_2$.  Thus by the third property of hierarchies,
$d_{Y_2}(\bdry Y_1,R)\leq M_1$.  Since $Y_1\prec_t Y_2$ in
$\rho(P_1,P_2)$ $d_{Y_2}(\bdry Y_1, P_1)\leq M_1$ as well. The
triangle inequality gives the first bound.  $$|d_{Y_2}(\partial
Y_1,P_2)- d_{Y_2}(R,P_2)|\leq M_1$$ and $$|d_{Y_2}(\partial
Y_1,P_2)-d_{Y_2}(P_1,P_2)|\leq M_1.$$ The two inequalities together
finish the proof.
\end{proof}

\section{Relative stability of quasi-geodesics}
\label{section:rel}

We now specialize to the case when $S = S_{g,n}$ is a surface of genus
$g$ with $n$ boundary components, and $\zeta(S)=3$ (in other words,
$(g,n)$ is $(2,0)$, $(1,3)$, or $(0,6)$).

\begin{defn}
Let $S = S_{g,n}$ where $\zeta(S)=3$.  Then we say an essential
subsurface $W$ of $S$ is a {\em separated domain} if $W$ is a four
holed sphere or one-holed torus, and there is another four-holed
sphere or one-holed torus $W^c$ so that $W$ and $W^c$ may be embedded
disjointly in $S$.
\end{defn}

Given a hierarchy path $\rho$, we denote by $X(\rho)$ the union of the
image of $\rho$ and the Farey-graph products $X_{\gamma} = C(W) \times
C(W^c)$ where the separating curve $\gamma \in \C(S)$ is the common
boundary of the separated domains $W$ and $W^c$ for which either $W$
or $W^c$ is a component domain of $\rho$.

The central result of the paper is Theorem~\ref{theorem:main} which
itself is a consequence of the following result, which guarantees the
existence of a contracting projection map from $P(S)$ to $X(\rho)$.

We remind the reader of an important contraction property that always
exists for projections to quasi-convex subsets of $\delta$-hyperbolic
metric spaces.
\begin{defn}
A map $\Pi$ has an $(a,b,c)$-contraction property if there exists
$a,b,c>0$ such that if $d(P,\Pi(P))\geq a$, and $d(P,Q)<bd(\Pi(P),P)$
then $d(\Pi(P),\Pi(Q))<c$.
\end{defn}
In particular, if $X$ is a $\delta$-hyperbolic metric space then there
is a triple $(a,b,c)$ so that if $g$ is any geodesic lying in $X$ then
the nearest point projection $\pi_g:X\to g$ satisfies an
$(a,b,c)$-contraction property.  If a map $\Pi$ satisfies an
$(a,b,c,)$-contraction property, for some $(a,b,c)$, we say it has the
{\em contraction property}.

\begin{theorem}
\label{th:projection}
Fix a hierarchy path $\rho = \rho(P_1,P_2)$.  There exists a
projection map $$\Pi:P(S)\to X(\rho)$$ that is coarsely idempotent,
coarsely Lipschitz, and has the contraction property.

Moreover the projection $\Pi$ satisfies the following conditions:
there is a constant $K$ depending only on the topology of $S$ so that
for each $P \in P(S)$ we have
\begin{enumerate}
\item each non-separated component domain $Z$ of
$\rho$ satisfies $$d_Z(\Pi(P),\pi_{g_Z}(P)) < K,$$
\item \label{separated:projection} each separated component domain
$W$ and its complement $W^c$ satisfy $$d_W(\Pi(P),\pi_W(P)) < K,\ \
d_{W^c}(\Pi(P),\pi_{W^c}(P))<K.$$
\end{enumerate}
\end{theorem}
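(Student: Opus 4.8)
The plan is to construct the projection $\Pi$ explicitly using the subsurface-coordinates dictionary provided by the distance formula~\eqref{eq:distance}, and then verify the three listed properties plus the contraction property one at a time. First I would fix the hierarchy path $\rho = \rho(P_1,P_2)$ together with its collection of component domains and the associated geodesics $g_Y$ in each $\C(Y)$. Given an arbitrary $P \in P(S)$, the idea is to define $\Pi(P)$ to be (any representative of) the pants decomposition whose subsurface projections are prescribed as follows: for a non-separated component domain $Z$ of $\rho$, set the $\C(Z)$-coordinate to be $\pi_{g_Z}(P)$, the nearest-point projection of $\pi_Z(P)$ to the geodesic $g_Z$; for a separated component domain $W$ (with complement $W^c$), keep the honest projections $\pi_W(P)$ and $\pi_{W^c}(P)$ unchanged, so that this coordinate lies in the Farey-graph factor of $X_\gamma$; and for every subsurface $Y$ that is \emph{not} a component domain of $\rho$, set the $\C(Y)$-coordinate to agree (coarsely) with $\pi_Y$ of the appropriate point of the image of $\rho$ — concretely, the point $\rho(t)$ for $t$ at the end of (or inside) the relevant time-interval, chosen using the time-ordering of Lemma~\ref{lem:time:order} and properties~\eqref{boundary:projection}--\eqref{endpoint:projection} of hierarchy paths. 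One then appeals to the realization/consistency theorem of Masur--Minsky (the fact that a coherent family of subsurface projections satisfying the standard consistency inequalities is realized, up to bounded error, by an actual pants decomposition, which is exactly what underlies~\eqref{eq:distance}) to see that such a $\Pi(P)$ exists and lies within bounded distance of $X(\rho)$; on component domains it either hits the geodesic $g_Z$ (hence the image of $\rho$) or lies in a Farey factor of some $X_\gamma$, so $\Pi(P)\in X(\rho)$ coarsely.

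Properties (1) and (2) of the theorem are then immediate from the construction: on a non-separated component domain $Z$ we \emph{defined} the $Z$-coordinate of $\Pi(P)$ to be $\pi_{g_Z}(P)$, so $d_Z(\Pi(P),\pi_{g_Z}(P))$ is bounded by the realization error $K$; similarly on a separated $W$ and its complement we kept $\pi_W(P)$, $\pi_{W^c}(P)$ intact. Coarse idempotence follows because if $P$ already lies in $X(\rho)$ then its coordinates already have the prescribed form (on a component domain, $\pi_Z(P)$ lies essentially on $g_Z$, so $\pi_{g_Z}(P)\approx\pi_Z(P)$; on a separated pair the coordinates are untouched; off the component domains, a point of $X(\rho)$ has the same projections as the nearby point of the image of $\rho$), so $\Pi(P)\approx P$. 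Coarse Lipschitzness is where I would use~\eqref{eq:distance} directly: $d(\Pi(P),\Pi(Q))$ is comparable to $\sum_Y [d_Y(\Pi(P),\Pi(Q))]_M$, and coordinate-by-coordinate the map is distance-nonincreasing up to additive error — for component domains because $\pi_{g_Z}$ is $k$-Lipschitz (as recorded just after its definition) and honest projections are $1$-Lipschitz, and for non-component domains because the coordinate of $\Pi(\cdot)$ depends on the input only through which point of $\rho$ gets selected, which changes by a controlled amount. Summing these term-by-term comparisons and applying~\eqref{eq:distance} once more on the $P,Q$ side gives the Lipschitz bound.

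The contraction property is the part I expect to be the main obstacle, and it is where hyperbolicity of the individual curve complexes must be leveraged. Suppose $d(P,\Pi(P))$ is large and $d(P,Q) < b\, d(P,\Pi(P))$; I want $d(\Pi(P),\Pi(Q))$ bounded. Via~\eqref{eq:distance}, $d(P,\Pi(P))$ large means some coordinate $d_Y(P,\Pi(P))$ is large, which — given the definition of $\Pi$ on component domains — forces $\pi_Y(P)$ to be far from the geodesic $g_Y$ (for a non-separated component domain $Z$: $\pi_Z(P)$ is far from $g_Z$), or forces the image of $\rho$ to be far from $P$ in some non-component direction. The mechanism is then: for each coordinate $Y$ where $\Pi$ involves a nearest-point projection to a geodesic in a $\delta$-hyperbolic space, the standard $(a,b,c)$-contraction property of $\pi_{g_Y}$ (quoted in the excerpt just before the theorem statement) says that unless $Q$ has moved a definite fraction of the way toward $g_Y$, its projection hasn't moved; and the hypothesis $d(P,Q)$ small relative to $d(P,\Pi(P))$, unpacked through~\eqref{eq:distance}, prevents that from happening in every relevant coordinate simultaneously. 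The delicate points are (a) controlling the non-component-domain coordinates and the time-ordering data under the move from $P$ to $Q$ — here Lemma~\ref{lem:time:order} is the right tool, since it says that changing the endpoint from $P_1$ to a nearby $R$ does not alter the time-order of transverse component domains and changes their projections by at most $2M_1$ — and (b) ensuring the finitely many coordinate-wise contraction constants, together with the bounded number of subsurfaces of a fixed $S$, assemble into a single triple $(a,b,c)$ depending only on the topology of $S$. I would organize this as: reduce via~\eqref{eq:distance} to the coordinate where $d_Y(P,\Pi(P))$ is comparable to $d(P,\Pi(P))$; apply geodesic contraction in $\C(Y)$ there; use Lemma~\ref{lem:time:order} and properties~\eqref{boundary:projection}--\eqref{endpoint:projection} to check the remaining coordinates of $\Pi(Q)$ stay close to those of $\Pi(P)$; and conclude. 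The bookkeeping of constants through~\eqref{eq:distance}, and the case analysis of whether the dominant domain is separated, non-separated, or not a component domain at all, is the genuinely technical heart of the argument.
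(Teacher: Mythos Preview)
Your construction of $\Pi$ has a genuine gap. You want to prescribe subsurface coordinates and then invoke a realization/consistency theorem to produce an actual pants decomposition. Two problems. First, you have not checked that your prescribed tuple $(x_Y)_Y$ is consistent: for two transverse non-separated component domains $Z_1,Z_2$ there is no reason the pair $(\pi_{g_{Z_1}}(P),\pi_{g_{Z_2}}(P))$ satisfies the Behrstock-type inequalities, since nearest-point projection to $g_{Z_i}$ can move $\pi_{Z_i}(P)$ a long way and destroy the consistency that $P$ itself enjoyed. Second, and more basically, even if such a $\Pi(P)$ exists it is just \emph{some} pants decomposition; nothing in your argument forces it to lie in $X(\rho)$, which is what the theorem requires. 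The paper sidesteps both issues by never realizing abstract coordinates: it locates, by a case analysis on the set $D(P,\rho)$ of domains where $P$ projects far from both endpoints, either a specific separated domain $W$ (and sets $\Pi(P)=\partial W\cup\pi_W(P)\cup\pi_{W^c}(P)\in X_{\partial W}$) or a minimal non-separated domain $Y\in D(P,\rho)\cap D(\rho)$ and then a specific parameter value $j$ on $\rho$ itself (and sets $\Pi(P)=\rho(j)$). The relative-synchronization inequalities (1) and (2) are then proved \emph{after the fact} by a four-case analysis on how $Z$ sits relative to $Y$; they are not built into the definition.

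Your contraction argument also does not go through as written. The hypothesis $d(P,Q)<b\,d(P,\Pi(P))$ is a global bound in $P(S)$; it does \emph{not} give $d_Y(P,Q)<b\,d_Y(P,\Pi(P))$ in each coordinate, so you cannot simply invoke the $(a,b,c)$-contraction of $\pi_{g_Y}$ coordinate-by-coordinate. (Your phrase ``bounded number of subsurfaces of a fixed $S$'' is also off: the number of component domains of $\rho$ is unbounded.) The paper's mechanism is quite different: it introduces the hierarchy path $\rho'=\rho(P,\Pi(P))$ and its own projection $\Pi'$, shows via Lemmas~\ref{lem:closestpoint}--\ref{lem:lastagain} that $\Pi$ is nearly constant along $\rho'$, and then uses the Lipschitz property of $\Pi'$ to place $Q'=\Pi'(Q)$ on $\rho'$ far from $\Pi(P)$. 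A time-ordering argument (the Claim in the proof) then pins down that any domain $W$ with $d_W(\Pi(P),\Pi(Q))$ large must contain the single domain $Z$ witnessing $d_Z(Q',\Pi(P))$ large, and there are only boundedly many such $W$ by nesting. That reduction to a single chain $Z\subseteq W$ is the missing idea in your outline.
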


\noindent We refer to the final two conditions of the theorem as the {\em
relative synchronization property} for the map $\Pi$.  Notice that in
condition~(\ref{separated:projection}) we do not require that the
projection $\Pi(P)$ be close to any geodesic in $\calC(W)$ or
$\calC(W^c)$.

\begin{proof}
We break the proof into three parts.  In part (i), we construct the
projection. In part (ii), we verify that $\Pi$ is relatively
synchronized, coarsely idempotent and coarsely Lipschitz.  Finally, in
part (iii) we show the projection $\Pi$ satisfies the contraction
property.
 
\bold{Part (i): Constructing the Projection.}
Let $\rho \colon [0,n] \to P(S)$ be a hierarchy path with $\rho(0) =
P_1$ and $\rho(n) = P_2$.  Let $$D(\rho)=S\cup \{Y: Y \text{such
that}\ d_Y(P_1,P_2)\geq 2M_1\}$$ Given $P$ in $P(S)$, let $$D(P,\rho)
= S\cup\{ Y: \min( d_Y(P_1,P), d_Y(P,P_2)) \ge 2M_1 \}.$$

First assume $D(P,\rho)$ contains a separated domain $W$, and either
$W$ or $W^c$ is in $D(\rho)$.  It follows immediately from
Lemma~\ref{lem:time:order} that there cannot be any other separated
domain (other than $W^c$) with the same property.  In this case set
$$\Pi(P)= \bdry W\cup \pi_W(P)\cup
\pi_{W^c}(P)$$ in the Farey graph product $X_{\bdry W}$. 

Now assume there is no such separating domain.  Again from
Lemma~\ref{lem:time:order} two subsurfaces $Y_1,Y_2$ in $D(P,\rho)\cap
D(\rho)$ cannot transversely intersect.  Then we let $B(P,\rho)$
denote the union of all the isotopy classes of boundary components of
the subsurfaces in $D(P,\rho)\cap D(\rho)$ that are non-peripheral in
$S$.  We note that $B(P,\rho)$ may be empty.  If it is nonempty it is
a simplex in $\C(S)$.  If $B^*(P,\rho)$ denotes the realization of
these isotopy classes as a disjoint union of simple closed curves on
$S$ then consider the complement $S
\setminus N(B^*(P,\rho)))$  of a union of annular neighborhoods of
these curves.  If this complement contains a subsurface $Y$ that is
not a 3-holed sphere, then $Y$ is unique, since it cannot be a
separated domain by assumption, and furthermore $Y$ lies in
$D(P,\rho)\cap D(\rho)$.  In this case $Y$ is minimal in
$D(P,\rho)\cap D(\rho)$ with respect to the order induced by inclusion
(we remind the reader that $Y$ may be the full surface $S$).

Recall $k$ is the Lipschitz constant for the projection $\pi_{g_W}$.
Denote by $\calW(P,\rho)$ the (possibly empty) set of all $W \subset
Y$ such that
\begin{itemize}
\item 
$d_W(P_1,P_2) > 10kM_1.$
\item $I_W\cap I_Y\neq\emptyset$
\end{itemize}

Note in particular that the first condition implies $W\in D(\rho)$.
Since $Y$ is minimal in $D(P,\rho)$, we have $W \notin D(P,\rho)$ so
$d_W(P,P_i)\leq 2M_1$ for either $P_1$ or $P_2$.  Then we claim that
the following order relationship holds. Either
\begin{enumerate}
\item $d_W(P_1,\pi_{g_W}(P)) \leq 5kM_1$ and we say $P < W$ or 
\item $d_W(\pi_{g_W}(P),P_2) \leq 5kM_1$ and we say $P > W$.
\end{enumerate}

(The definition of $\calW(P,\rho)$ and the triangle inequality says
that both inequalities cannot hold).  To prove the claim assume
without loss of generality that $d_W(P,P_1)\leq 2M_1$. Let $v_0$ be
the initial vertex of the geodesic $g_W$.  By
property~\ref{endpoint:projection} of hierarchy paths, we have
$d_W(P_1,v_0)\leq M_1$, and so by the triangle inequality,
$$d_W(P,v_0)\leq 3M_1.$$ Since $\pi_{g_W}(P)$ is the closest vertex in
$g_W$ we then have $$d_W(P,\pi_{g_W}(P))\leq 3M_1$$ and so again by
the triangle inequality $$d_W(P_1,\pi_{g_W}(P))\leq 5M_1\leq 5kM_1$$
proving the claim.

Let $\calW_-$ denote the set of $W \in \calW(P,\rho)$ with $P>W$ and
let $\calW_+$ denote the set of $W \in \calW(P,\rho)$ with $P<W$.

Define the subsets of the parameter values $$U_- = \bigcup_{W \in
\calW_-} I_W \ \ \ \text{and} \ \ \ U_+ = \bigcup_{W \in \calW_+}
I_W.$$ Then we let $i_1'$ be the maximum parameter value in $U_-$ and
$i_2'$ be the minimum parameter value in $U_+$.  (If $U_-=\emptyset$
take $i_1'$ to be the initial point of $I_Y$ and similarly if
$U_+=\emptyset$ take $i_2'$ the maximal value of $I_Y$).  Let $$P_1' =
\rho(i_1') \ \ \ \text{and} \ \ \ P_2' = \rho(i_2').$$

There is a connected set $J$ of vertices on $g_Y$ that are vertices of
pants decompositions between $P_1'=\rho(i_1')$ and $P_2'=\rho(i_2')$.
By the hyperbolicity of $\C(Y)$ there are a bounded number of vertices
in $J$ that are closest to the projection of $P$ into $\C(Y)$.  Pick
one such closest $v$.  Let $\rho(j)$ be a pants decomposition that
contains this $v$ and define $$\Pi(P)=\rho(j).$$ This completes the
construction.

\bold{Part (ii):  The map $\Pi$ is relatively synchronized, coarsely
idempotent, and coarsely Lipschitz.}

We first note the following Lemma.  Let $Y$ be the surface determined
by the construction of $\Pi$.  If $Y$ is a proper subsurface, then
$\Pi(P)$contains $\bdry Y$.  \begin{lemma}
\label{lem:bounded}  Suppose $Y$ is a proper subsurface.  Suppose    $Z$ transversely intersects $Y$,
and $d_Z(P_1,P_2)\geq 4M_1$.  Then $d_Z(P,\Pi(P))\leq 4M_1$.
\end{lemma}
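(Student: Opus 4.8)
\emph{Proof proposal.} The plan is to exploit the observation just recorded — that $\Pi(P)$ contains $\bdry Y$ — to replace $\Pi(P)$ by $\bdry Y$ inside the $\C(Z)$-projection, and then to play the defining condition ``$Z\notin D(P,\rho)$'' off against the time-order axioms of the hierarchy path $\rho$, using Lemma~\ref{lem:time:order} to eliminate the configurations that would otherwise violate the bound.

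First I would set up the reduction. Because $Y$ is a proper subsurface, the construction gives $\Pi(P)=\rho(j)$ with $j\in I_Y$, so $\bdry Y$ is among the curves of the pants decomposition $\Pi(P)$; and since $Z$ and $Y$ intersect transversely, $\bdry Y$ meets $Z$ essentially. Hence $\pi_Z(\bdry Y)\subseteq\pi_Z(\Pi(P))$ and $d_Z(\bdry Y,\Pi(P))\le c_0$, where $c_0$ (depending only on the topology of $S$) bounds the diameter in $\C(Z)$ of the $\pi_Z$-image of a pants decomposition. So $d_Z(P,\Pi(P))\le d_Z(P,\bdry Y)+c_0$, and since $M_1$ dominates $c_0$ it is enough to show $d_Z(P,\bdry Y)\le 3M_1$.

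Next I would observe that $Z\notin D(P,\rho)$: from $d_Z(P_1,P_2)\ge 4M_1\ge 2M_1$ we get $Z\in D(\rho)$; on the other hand $Y\in D(P,\rho)\cap D(\rho)$, and in the branch of the construction in which a proper subsurface $Y$ arises no two members of $D(P,\rho)\cap D(\rho)$ intersect transversely, so $Z\notin D(P,\rho)$, i.e. $\min\bigl(d_Z(P_1,P),\,d_Z(P,P_2)\bigr)<2M_1$. Now $Y$ and $Z$ are both component domains of $\rho$, and being transverse they are time-ordered. If $Y\prec_t Z$, then property~\ref{boundary:projection} gives $d_Z(P_1,\bdry Y)\le M_1$; in the case $d_Z(P_1,P)<2M_1$ the triangle inequality yields $d_Z(P,\bdry Y)<3M_1$, as desired, whereas the case $d_Z(P,P_2)<2M_1$ (which forces $d_Z(P_1,P)>2M_1$) is impossible — Lemma~\ref{lem:time:order} applied with $Y_1=Y$, $Y_2=Z$, $R=P$, whose hypotheses hold since $d_Y(P_1,P_2)\ge 2M_1$, $d_Z(P_1,P_2)\ge 2M_1$ and $d_Y(P,P_2)>2M_1$ because $Y\in D(P,\rho)$, would give $d_Z(P_1,P)\le 2M_1$. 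The case $Z\prec_t Y$ is symmetric, using $d_Z(P_2,\bdry Y)\le M_1$ from property~\ref{boundary:projection}: the sub-case $d_Z(P,P_2)<2M_1$ gives $d_Z(P,\bdry Y)<3M_1$ directly, while the sub-case $d_Z(P_1,P)<2M_1$ (forcing $d_Z(P,P_2)>2M_1$) is excluded by Lemma~\ref{lem:time:order} with $Y_1=Z$, $Y_2=Y$, $R=P$, which would give $d_Y(P_1,P)\le 2M_1$ against $Y\in D(P,\rho)$.

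The main obstacle is really the bookkeeping in the last step: one must recognize that the two ``out of order'' sub-cases are precisely those killed by the Time Order Lemma, and then check that the constants line up — in particular the mild clash between the non-strict inequalities defining $D(P,\rho)$ and the strict hypotheses of Lemma~\ref{lem:time:order}. If this causes trouble one simply enlarges $M_1$, or absorbs the discrepancy into the additive constant $c_0$ already present; nothing deeper is involved.
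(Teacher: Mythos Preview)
Your argument is correct and reaches the same conclusion, but the route differs from the paper's. The paper argues by contradiction: assuming $d_Z(P,\Pi(P))>4M_1$, it observes that $Z$ becomes a component domain of an \emph{auxiliary} hierarchy $\rho(P,\Pi(P))$, and since $\bdry Y$ sits in the terminal pants decomposition $\Pi(P)$ of that hierarchy, one obtains $d_Y(\bdry Z,P)\le M_1$; a second application (within the original $\rho$, using $d_Z(P_i,\Pi(P))\ge 2M_1$ for some $i$) gives $d_Y(P_i,\bdry Z)\le M_1$, and the triangle inequality then forces $d_Y(P_i,P)\le 2M_1$, contradicting $Y\in D(P,\rho)$. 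You instead stay entirely inside the original hierarchy $\rho$: you deduce $Z\notin D(P,\rho)$ from the ``no transverse pairs in $D(P,\rho)\cap D(\rho)$'' principle, and then run a four-way case split on the time order of $Y,Z$ in $\rho$ and on which of $d_Z(P_1,P),d_Z(P,P_2)$ is small, invoking Lemma~\ref{lem:time:order} to kill the two incompatible configurations. Your approach is more self-contained (no second hierarchy is introduced) at the cost of more casework; the paper's is shorter but leans more heavily on the Masur--Minsky machinery implicit in the ``fourth property'' step. The strict/non-strict discrepancy you flag is present in the paper's argument too (it also ends with $d_Y(P_1,P)\le 2M_1$ against $\ge 2M_1$), and is indeed harmless for the reason you give.
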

\begin{proof}  If the conclusion of the lemma is false, then $Z$ is a
component domain of a hierarchy $\rho(P,\Pi(P))$.  Since $\Pi(P)$
contains $\bdry Y$, by the fourth property of hierarchies, $d_Y(\bdry
Z,P)\leq M_1$.  On the other hand, by the triangle inequality, for
either $i=1,2$ we have $d_Z(P_i,\Pi(P))\geq 2M_1$.  Without loss of
generality assume this holds for $i=1$.  Then again by the fourth
property of hiearchies, since $\Pi(P)$ contains $\bdry Y$,
$d_Y(P_1,\bdry Z)\leq M_1$ and so by the triangle inequality we have
$$d_Y(P_1,P)\leq 2M_1,$$ a contradiction to the fact that $Y\in
D(P,\rho)$.
\end{proof}

We now give the proof of relative synchronization.  First assume that
$Z$ is not separating.  The proof breaks into cases.

\bold{Case I.}  {\em The subsurface 
$Z$ transversely intersects $Y$.}  If $d_Z(P_1,P_2)\leq 4M_1$, then by
property~\ref{endpoint:projection} of hierarchies and the fact $g_Z$
is a geodesic, $$d_Z(\Pi(P),\pi_{g_Z}(P))\leq 5M_1$$ and we can take
$K=5M_1$.

 If $d_Z(P_1,P_2)\geq 4M_1$, then by Lemma~\ref{lem:bounded} we have
 $d_Z(P,\Pi(P))\leq 4M_1$.  But then the fact that projections to
 geodesics are $k$-Lipschitz says that
 $$d_Z(\pi_{g_Z}(\Pi(P)),\pi_{g_Z}(P))
\leq 4kM_1$$ By Property 4 of hierarchies,
$d_Z(\pi_{g_Z}(\Pi(P)),\Pi(P))\leq M_1$.   Therefore  by the triangle 
inequality, we have the result for $K=4kM_1+M_1$.

 \bold{Case II.}  {\em $Z=Y$} If we can show that some closest point
 projection $v'=\pi_{g_Y}(P)$ satisfies $$d_Y(v',J)\leq 3,$$ then by
 the hyperbolicity of $\C(Y)$, certainly $v'$ would be within bounded
 distance of $\Pi(P)$, the nearest point projection in $J$ of $P$, and
 we would be done.  Assume this is false, and assume without loss of
 generality that all closest $v'$ lie before $J$ along $g_Y$ at
 distance greater than $3$.

Let $W\in W_-$ be the domain such that the right endpoint of $I_W$
coincides with the left endpoint of $J$.  Let $h$ be the geodesic in
$\C(Y)$ joining $\pi_Y(P)$ to $v'$.  Then we first assert that every
vertex of $h$ intersects $\partial W$.  If this were not true then
some vertex $u$ of $h$ would be within distance $1$ of $\bdry W$ and
therefore within distance $2$ of $J$.

If $d_S(u,v')\leq 1$ then $d_S(J,v')\leq 3$, a contradiction to the
assumption.  If $d_S(u,v')\geq 2$, then $d_S(u,J)\leq d_S(u,v')$, and
we have contradicted that there are no closest points in $J$.  This
proves the assertion that every vertex of $h$ must intersect $\partial
W$.  But then by the first property of hierarchy paths,
$$d_W(P,v')\leq M_2.$$ By the fourth property of hierarchies,
$$d_W(P_1,v')\leq M_1.$$ By the triangle inequality, $d_W(P,P_1)\leq
M_1+M_2\leq 2M_1$.  Then the Lipschitz property of projections to
geodesics implies $$d_W(\pi_{g_W}(P), \pi_{g_W}(P_1))\leq 2kM_1$$ and
so again by the fourth property of hierarchies and the triangle
inequality, $$d_W(P_1,\pi_{g_W}(P))\leq 2kM_1+M_1< 5kM_1$$ so $P<W$.
This contradiction finishes the argument.

 \bold{Case III.}{\em The subsurface $Y$ is a proper subsurface of
 $Z$. } Since $\Pi(P)$ contains $\bdry Y$ it is enough to show that
 $$d_Z(\bdry Y,\pi_{g_Z}(P))<3.$$ Assuming otherwise, since
 $\pi_{g_Z}(P)$ is the closest point projection, every vertex on the
 geodesic $h$ in $\C(Z)$ joining $\pi_Z(P)$ to $\pi_{g_Z}(P)$
 intersects $Y$. Consequently, by the first property of hierarchies,
 we have $d_Y(P,\pi_{g_Z}(P))\leq M_2$.  Since $\pi_{g_Z}(P)$
 intersects $Y$, for either $i=1,2$ $d_Y(P_i,\pi_{g_Z}(P))\leq M_1$,
 so for that value of $i$, $$d_Y(P,P_i)\leq M_1+M_2\leq 2M_1$$ a
 contradiction to $Y\in D(P,\rho)$.
 
\bold{Case IV.} {\em The subsurface $Z$ is a proper subsurface of $Y$.}
In this case $Z$ belongs to $\calW$. Without loss of generality assume
$P>Z$ so that $$d_Z(P_2,\pi_{g_Z}(P))\leq 5kM_1.$$ Since
$$d_Z(\Pi(P),P_2)\leq M_1$$ by the fourth property of hiearchies, we
are done by the triangle inequality.  We have proved 1.

We now establish 2.  If the separating $W$ lies in $D(P,\rho)$ and
either $W$ or $W^c$ lies in $D(\rho)$, then $\Pi(P)$ lies in
$X_\gamma$ where $W \subset S
\setminus \gamma$, and in this case $d_W(P, \Pi(P))=0$.
The cases that $W\notin D(P,\rho)$ or both $W,W^c\notin D(\rho)$ are
part of 1.

It follows from relative synchronization that the map is coarsely
idempotent; there is a constant $K$ such that if $P\in\rho[0,n]$ then
$d(\Pi(P),P)\leq K$.

To see that the map is coarsely Lipschitz, we note that the image
depends only on the projections of $P$ to subsurfaces of $S$.  Since
projections are Lipschitz maps, there is a constant $C$ such that if
$d_W(P,P')=1$, then $d_W(\Pi(P),\Pi(P'))\leq C$.  Now the Lipschitz
property follows from (\ref{eq:distance}).

\end{proof}

\bold{Part (iii): The projection $\Pi$ has the contraction property.}

We begin this section by establishing further properties of the
projection map $\Pi$.  Let $\rho':[0,N]\to P(S)$ denote the hierarchy
joining $P$ and $\Pi(P)$. The next lemma says that $\Pi$ is almost a
nearest point projection on $\rho$.
\begin{lemma}
\label{lem:closestpoint}
Suppose $Z$ is a component domain of $\rho'$ with corrresponding
geodesic $g'_Z$ and parameter interval $I'_Z$.  Let $j'_Z$ be the last
parameter value and $k_Z'$ any parameter value.
\begin{itemize}
\item If $Z$ is also  a component domain of $\rho$ with geodesic $g_Z$,  then 
 $$d_Z(\pi_{g_Z}(P_{k_Z'}),\pi_{g_Z}(P_{j_Z'}))\leq
 2kM_1+K+M_1+2k\delta.$$ where $\C(Z)$ is $\delta$ hyperbolic.

\item The closest point projection $\pi_{g_Z'}$ satisfies 
$$d_Z(\pi_{g_Z'}(P), P_{j_Z'})\leq (4+2k)\delta+2M_1(2k+1)+K+M_2$$ for
all $P\in \rho$.
\end{itemize}
\end{lemma}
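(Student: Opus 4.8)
The plan is to reduce both parts to the geometry of geodesics and their nearest-point projections in the $\delta$-hyperbolic space $\C(Z)$, using three ingredients: property~\ref{endpoint:projection} of hierarchy paths (``endpoint projection''); the second property of hierarchy paths (each $\rho'(t)$ with $t\in I'_Z$ carries a vertex of $g'_Z$); and the relative synchronization property of $\Pi$ from Theorem~\ref{th:projection}. Write $p=\pi_Z(P)$ and $q=\pi_Z(\Pi(P))$. Property~\ref{endpoint:projection}, applied to the component domain $Z$ of $\rho'$, shows the initial and terminal vertices of $g'_Z$ are $M_1$-close to $p$ and $q$; since $j'_Z=\max I'_Z$ and $\Pi(P)=\rho'(N)$ it also gives $d_Z(P_{j'_Z},q)\le M_1$, so $\pi_Z(P_{j'_Z})$ is a coarse stand-in for the terminal vertex of $g'_Z$. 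And the second property says $\pi_Z(P_{k'_Z})$, indeed $\pi_Z(\rho'(t))$ for every $t\in I'_Z$, lies within a universal constant of $g'_Z$.

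For the first bullet, where $Z$ is additionally a component domain of $\rho$ with geodesic $g_Z$, the point is that $g'_Z$ projects into a bounded sub-arc of $g_Z$ around $\pi_{g_Z}(P)$. Relative synchronization (Theorem~\ref{th:projection}(1) for non-separated $Z$, with the variant~(2), phrased via $\pi_W$, giving $d_Z(p,q)\le K$ outright when $Z$ is separated) places $q$ within $K$ of $\pi_{g_Z}(P)$, which is by definition the closest point of $g_Z$ to $p$; hence $g'_Z$ is, up to bounded Hausdorff error, a geodesic from $p$ to within $K$ of its own nearest point on $g_Z$. For any vertex $x$ of $g'_Z$ one then gets $d_Z(p,\pi_{g_Z}(x))\le d_Z(p,x)+d_Z(x,\pi_{g_Z}(P))\le d_Z(p,q)+K\le d_Z(p,\pi_{g_Z}(P))+2K$, while $d_Z(p,\pi_{g_Z}(x))\ge d_Z(p,\pi_{g_Z}(P))$; since the distance function $y\mapsto d_Z(p,y)$ grows coarsely linearly on $g_Z$ away from its minimum $\pi_{g_Z}(P)$, this forces $\pi_{g_Z}(x)$ to within $O(\delta)$ of $\pi_{g_Z}(P)$. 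Applying this to vertices of $g'_Z$ near $\pi_Z(P_{k'_Z})$ and $\pi_Z(P_{j'_Z})$, absorbing the universal and $M_1$-sized errors with the $k$-Lipschitz bound for $\pi_{g_Z}$, gives a bound of the asserted form $2kM_1+K+M_1+2k\delta$.

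For the second bullet I read ``for all $P\in\rho$'' as a re-used dummy, i.e.\ we must bound $d_Z(\pi_{g'_Z}(R),P_{j'_Z})$ for every vertex $R$ of $\rho$, and I would split on whether $Z$ is a component domain of $\rho$. If it is not, the Masur--Minsky theory bounds the total variation of $\pi_Z$ along $\rho$ by $O(M_2)$; since $\Pi(P)=\rho(j)$ lies on $\rho$ this gives $d_Z(R,\Pi(P))=O(M_2)$, hence $\pi_Z(R)$ is $O(M_2)$-close to $q$, which is $M_1$-close to $\pi_Z(P_{j'_Z})$; as $\pi_Z(P_{j'_Z})$ sits near the terminal vertex of $g'_Z$, the projection $\pi_{g'_Z}(R)$ lands within the stated bound of $P_{j'_Z}$ (this is the source of the $M_2$). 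If $Z$ is a component domain of $\rho$, property~\ref{endpoint:projection} for $\rho$ puts $\pi_Z(R)$ within $M_1$ of $g_Z$; since $\pi_{g_Z}(P)$ is the closest point of $g_Z$ to $p$, the geodesic $[p,\pi_Z(R)]$ passes within $M_1+O(\delta)$ of $\pi_{g_Z}(P)$, which by relative synchronization and property~\ref{endpoint:projection} for $\rho'$ lies within $K+M_1$ of both the terminal vertex of $g'_Z$ and of $\pi_Z(P_{j'_Z})$; so $g'_Z$, sharing its initial vertex with $[p,\pi_Z(R)]$ and ending near a point of that geodesic, is Hausdorff-close to an initial segment of it. Thus $\pi_Z(R)$ lies coarsely past the terminal vertex of $g'_Z$, its nearest point on $g'_Z$ is within $O(\delta)$ of that terminal vertex, and tracking constants yields the bound $(4+2k)\delta+2M_1(2k+1)+K+M_2$.

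I expect the work to be bookkeeping rather than conceptual: the only genuine bridge between the two geodesics $g_Z$ and $g'_Z$ is the relative synchronization statement that $\pi_Z(\Pi(P))$ is $K$-close to $g_Z$, together with the fact that $\Pi(P)$ actually lies on $\rho$, and once that is in hand everything reduces to standard $\delta$-hyperbolic projection estimates. The points needing care will be (i) handling the separated-domain case, where relative synchronization is stated via $\pi_W$ rather than $\pi_{g_W}$ and a short extra triangle inequality is needed; (ii) routing every ``coarse geodesic'' step through the $\delta$-hyperbolic projection lemmas and the $k$-Lipschitz bound, rather than arguing in a tree; and (iii) confirming that a subsurface which is not a component domain of $\rho$ really does have $O(M_2)$-bounded projection variation along $\rho$, the one place an external input from \cite{Masur:Minsky:CCII} is used.
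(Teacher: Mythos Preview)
Your plan is correct and follows the same overall strategy as the paper: both parts reduce to $\delta$-hyperbolic projection estimates in $\C(Z)$, using property~\ref{endpoint:projection} for $\rho'$ to identify the ends of $g'_Z$ with $p=\pi_Z(P)$ and $q=\pi_Z(\Pi(P))$, and relative synchronization to place $q$ within $K$ of $\pi_{g_Z}(P)$. Your reading of ``for all $P\in\rho$'' as a dummy variable and your case split in the second bullet (on whether $Z$ is a component domain of $\rho$) exactly match the paper.

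The one organizational difference worth noting is in the first bullet. You argue directly that every vertex $x$ of $g'_Z$ has $\pi_{g_Z}(x)$ close to $\pi_{g_Z}(P)$, via the sandwich $d_Z(p,\pi_{g_Z}(P))\le d_Z(p,\pi_{g_Z}(x))\le d_Z(p,\pi_{g_Z}(P))+2K$ together with coarse linear growth of $y\mapsto d_Z(p,y)$ along $g_Z$. The paper instead runs a four-term triangle-inequality chain on $g_Z$: it bounds $d_Z(\pi_{g_Z}(P_{i'_Z}),\pi_{g_Z}(P))\le kM_1$ and $d_Z(\pi_{g_Z}(\Pi(P)),\pi_{g_Z}(P_{j'_Z}))\le kM_1$ from property~\ref{endpoint:projection} plus the $k$-Lipschitz bound, and $d_Z(\pi_{g_Z}(P),\pi_{g_Z}(\Pi(P)))\le K+M_1$ from relative synchronization plus property~\ref{endpoint:projection} applied to $\Pi(P)\in\rho$; summing gives $d_Z(\pi_{g_Z}(P_{i'_Z}),\pi_{g_Z}(P_{j'_Z}))\le 2kM_1+K+M_1$, and then one appeals to hyperbolicity for intermediate parameters. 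This is a little cleaner for constant-tracking (your ``within $O(\delta)$'' should really be ``within $2K+O(M_1+\delta)$'', which is why the explicit bound you quote might not fall out exactly), but the content is the same. For the second bullet the paper first proves the bound for vertices $v\in g_Z$ and then lifts to $R\in\rho$ via $d_Z(R,g_Z)\le M_1$, whereas you work directly with $R$; again this is cosmetic.
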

\begin{proof}
Let $i_Z'$ be the initial paramter value of $I'_Z$.  By the fourth
property of hieararchies, we have
\begin{equation}
d_Z(P,P_{i'_Z})\leq M_1 \ \ \ \text{and} \ \ \
d_Z(\Pi(P),P_{j'_Z})\leq M_1.
\label{firstequation}
\end{equation}
Since the projections to a geodesic is $k$-Lipschitz, we get
\begin{equation}
d_Z(\pi_{g_Z}(P),\pi_{g_Z}(P_{i'_Z}))
\leq kM_1\ \ \ \text{and} \ \ \
d_Z(\pi_{g_Z}(\Pi(P)),\pi_{g_Z}(P_{j'_Z}))\leq kM_1.
\label{secondequation}
\end{equation}

Again applying the fourth property of hierarchies, we have
\begin{equation}
d_Z(\Pi(P),\pi_{g_Z}(\Pi(P)))\leq M_1.
\label{thirdequation}
\end{equation}
Thus by relative synchonization we have
\begin{equation}
\label{fourthequation}
d_Z(\pi_{g_Z}(P), \pi_{g_Z}(\Pi(P)))\leq K+M_1.
\end{equation}
The triangle inequality applied to inequalities~(\ref{secondequation})
and (\ref{fourthequation}) gives
$$d_Z(\pi_{g_Z}(P_{i'_Z}),\pi_{g_Z}(P_{j'_Z}))\leq 2kM_1+K+M_1.$$ The
first conclusion which is stated for any paramter value now follows
from the hyperbolicity of $\C(Z)$ and the fact that projections are
Lipschitz.

We prove the second statement. Notice first that if $Z$ is not
component domain of $\rho$, then by the first and fourth property of
hierarchies, and the triangle inequality. $$d_Z(P,P_{j_Z'})\leq
M_2+M_1$$ and the projection of $P$ to $g_{Z'}$ is even closer.

Now suppose $Z$ is a component domain with geodesic $g_Z$.  Let $v$ be
any vertex of $g_Z$ and let $v'=\pi_{g_Z'}(v)$ its closest point
projection on $g_{Z'}$.  Let $v''=\pi_{g_Z}(v')$ the closest point
projection of $v'$ on $g_Z$.  The first conclusion of the Lemma, the
fourth property of hierarchies, and the hyperbolicity of $\C(Z)$ imply
that $$d_Z(P_{j_Z'}, v'')\leq 2\delta+ 2kM_1+K+M_1+2k\delta+M_1.$$
Since $v''$ is the closest point to $v'$ on $g_Z$, the $\delta$
hyperbolicty of $\C(Z)$ implies that any geodesic $h$ joining $v'$ to
$v$ must pass within $2\delta$ of $v''$ and therefore by the above
bound, within $(4+2k)\delta+2M_1(k+1)+K$ of $\pi_{g_Z}(P_{j_Z'})$.
This gives the bound for $d_Z(v,v')$.  Since any $P\in \rho$ is
distance at most $M_1$ from $g_Z$ and the projections to geodesics are
$k$ Lipschitz, the result now follows from the triangle inequality.

\end{proof}

\begin{lemma}
\label{lem:almostproj}
There exists a constant $K''$ such that for any pants decomposition
$Q'=\rho'(i)$ in the hierarchy $\rho(P,\Pi(P))$,
$$d(\Pi(Q'),\Pi(P))\leq K'.$$
\end{lemma}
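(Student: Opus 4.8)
The plan is to bound $d(\Pi(Q'),\Pi(P))$ by way of the distance formula \eqref{eq:distance}. It suffices to produce a constant $M'$, depending only on $M_1$, $K$, $k$ and $\delta$ (hence only on the topology of $S$), with
$$d_Y(\Pi(Q'),\Pi(P))\le M'$$
for every non-annular subsurface $Y\subseteq S$; taking the threshold in \eqref{eq:distance} to be at least $M'$ (permissible, as the distance formula holds for all sufficiently large thresholds) then makes the right-hand side vanish and gives $d(\Pi(Q'),\Pi(P))\le C$, which is the advertised $K''$. So from now on I fix a non-annular $Y$ and recall $\rho'=\rho(P,\Pi(P))$. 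The basic mechanism I would use is coarse monotonicity of subsurface projections along a hierarchy path: combining the monotonicity of $t\mapsto s(t)$ (part (2) of Definition~\ref{defn:hierarchy:paths}) with the endpoint property (part~\ref{endpoint:projection}) yields a uniform constant with $d_Y(\rho'(i),\rho'(0))\le d_Y(\rho'(0),\rho'(N))+O(1)$ for every $i$, and likewise with $\rho'(N)$ in place of $\rho'(0)$; in particular $d_Y(Q',P)\le d_Y(P,\Pi(P))+O(1)$.

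I would then split into cases according to the role of $Y$ relative to $\rho$. If $Y$ is not a component domain of $\rho$ (a thrice-punctured sphere contributes nothing to \eqref{eq:distance}), then $d_Y(P_1,P_2)<M_2$; coarse monotonicity along $\rho$ keeps the $\C(Y)$-projection of every point of $\rho$ within $M_2+O(1)$ of $\pi_Y(P_1)$, and the explicit form of the Farey pieces $X_\gamma$ — a point $\bdry W\cup\pi_W(\cdot)\cup\pi_{W^c}(\cdot)$ projects into $\C(Y)$ close to $\pi_Y(\bdry W)$ when $Y$ is transverse to $W$ and $W^c$, and close to the image of one of $\pi_W(\cdot)$, $\pi_{W^c}(\cdot)$ when $Y$ is nested in one of them — gives the same for every point of $X(\rho)$; since $\Pi(Q'),\Pi(P)\in X(\rho)$ this bounds $d_Y(\Pi(Q'),\Pi(P))$. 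If $Y=Z$ is a non-separated component domain of $\rho$, then relative synchronization (Theorem~\ref{th:projection}(1)) gives $d_Z(\Pi(Q'),\pi_{g_Z}(Q'))<K$ and $d_Z(\Pi(P),\pi_{g_Z}(P))<K$, so it is enough to bound $d_Z(\pi_{g_Z}(Q'),\pi_{g_Z}(P))$; if $Z$ is also a component domain of $\rho'$ this is exactly the content of the first bullet of Lemma~\ref{lem:closestpoint}, applied at the parameter of $Q'$ and at the initial parameter of $I'_Z$ (which by the fourth property of hierarchies lies $\C(Z)$-close to $P$), while if $Z$ is not a component domain of $\rho'$ then $d_Z(P,\Pi(P))<M_2$ and the monotonicity bound together with the $k$-Lipschitz property of $\pi_{g_Z}$ finish it. Finally, if $Y=W$ or $W^c$ is a separated component domain of $\rho$, Theorem~\ref{th:projection}(2) reduces matters to bounding $d_W(\pi_W(Q'),\pi_W(P))$: when $\Pi(P)\in X_{\bdry W}$ one has $d_W(\Pi(P),P)=O(1)$, so $d_W(Q',P)=O(1)$ by monotonicity along $\rho'$; when $\Pi(P)\notin X_{\bdry W}$, the domain $W$ is handled by Theorem~\ref{th:projection}(1) and the previous case applies verbatim. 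In every case $d_Y(\Pi(Q'),\Pi(P))$ is bounded by a constant of the required form, and \eqref{eq:distance} assembles these into the bound $K''$.

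The hard part is the non-separated component-domain case, and specifically the following tension: although $\Pi(P)$ is \emph{synchronized} with $P$ — so $\rho'$ makes no net progress in any component domain of $\rho$ — the path $\rho'$ may itself be a long hierarchy path with many component domains of its own, and one must be certain that $\Pi$ records none of that travel. The essential input is Lemma~\ref{lem:closestpoint}: its first bullet says precisely that a subsurface which is a component domain of both $\rho$ and $\rho'$ still contributes only a bounded amount to $d_Z(\pi_{g_Z}(Q'),\pi_{g_Z}(P))$, and Lemma~\ref{lem:time:order} rules out any ambiguity (such as a competing separated domain) about which subsurfaces occupy this role. Once those are in hand the remaining estimates are routine projection bookkeeping, and the distance formula does the rest.
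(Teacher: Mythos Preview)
Your proof is correct and follows essentially the same route as the paper's: reduce $d(\Pi(Q'),\Pi(P))$ to subsurface projection bounds via the distance formula~\eqref{eq:distance}, then use relative synchronization together with Lemma~\ref{lem:closestpoint} to control $d_Z(\Pi(Q'),\Pi(P))$ on component domains of $\rho$. The paper's version is much terser --- it compresses your case analysis into the single clause ``Since $\Pi$ is Lipschitz we need only bound $d_Z(\Pi(P),\Pi(Q'))$ for those $Z$ that are component domains of both $\rho$ and the subhierarchy of $\rho'$'' --- but the substance (bounded $\pi_{g_Z}$-image of $g'_Z$ from Lemma~\ref{lem:closestpoint}, fourth property of hierarchies, Lipschitz projection) is identical. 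Your explicit handling of the separated-domain case and of subsurfaces that are not component domains of $\rho$ fills in details the paper leaves implicit, though your Case~1 appeal to ``coarse monotonicity'' and the Farey-piece description is itself somewhat informal.
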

\begin{proof}
We bound $d(\Pi(P),\Pi(Q'))$ using the distance formula
(\ref{eq:distance}).  Since $\Pi$ is Lipschitz we need only bound
$d_Z(\Pi(P),\Pi(Q'))$ for those $Z$ that are component domains of both
$\rho$ and the subhierarchy of $\rho'$ joining $P$ and $Q'$.

Now Lemma~\ref{lem:closestpoint} says that the geodesic $g_{Z'}$ has
bounded diameter projection to $g_Z$.  This together with the fourth
property of hierarchies and the fact that projection to $g_Z$ is
Lipschitz bounds the projection of any two points in the subhierarchy.

\end{proof}
  
Now let $\Pi'$ denote the projection to the hierarchy $\rho'$.

\begin{lemma}
\label{lem:lastagain}
There is $K'''$ and $C $, such that if $Z$ is a subsurface and $Q$ is
a pants decomposition such that
$$d_Z(\Pi'(Q),\Pi(P))\geq C,$$ then
\begin{enumerate}
\item  $d_Z(Q,\Pi(P))\geq 2M_1$. 
\item $d_Z(Q,\Pi(Q)\geq 2M_1+K$.
\item $d_Z(\Pi(P),\Pi(Q))\leq K'''$. 
\item For any $W$, if $d_W(Q,\Pi'(Q))\geq C + 2M_1$ then $d_W(Q,\Pi(P))\geq 2M_1$. 
\end{enumerate}
\end{lemma}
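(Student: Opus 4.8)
The plan is to exhibit $C$ and $K'''$ by a sequence of routine reductions, each of which converts a lower bound on a $g_{Z'}$-relative projection back to a lower bound on an absolute $d_Z$-distance, at which point the machinery of \S\ref{section:preliminaries} and the lemmas just proven apply. First I would prove (1). Since $\Pi'$ is the nearest-point projection to the hierarchy $\rho'$ joining $P$ and $\Pi(P)$, one has $d_Z(\Pi'(Q),\Pi(P)) \le d_Z(Q,\Pi(P)) + d_Z(Q,\Pi'(Q))$, and by the first and fourth properties of hierarchy paths applied to $\rho'$, together with Lemma~\ref{lem:closestpoint} (second bullet), $d_Z(Q,\Pi'(Q))$ is bounded by a universal constant $C_0 = (4+2k)\delta + 2M_1(2k+1) + K + M_2 + M_1$ or so. Hence taking $C \ge C_0 + 2M_1$ forces $d_Z(Q,\Pi(P)) \ge C - C_0 \ge 2M_1$, giving (1). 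For (2), I would combine (1) with the triangle inequality and the coarse idempotence/Lipschitz control on $\Pi$: by relative synchronization, $d_Z(\Pi(P),P)$ and $d_Z(\Pi(Q),Q)$ are the only places where $\Pi$ can move things far, but the real point is that $d_Z(Q,\Pi(Q)) \ge d_Z(Q,\Pi(P)) - d_Z(\Pi(P),\Pi(Q))$, so (2) will follow once (3) is in hand — I would therefore prove (3) first and deduce (2) from (1) and (3) by enlarging $C$ accordingly.

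For (3), the key observation is that (1) says $Z$ lies in $D(Q,\rho)$-type position relative to the endpoints, which by the time-order Lemma~\ref{lem:time:order} pins down the relevant separated or minimal domain used to build $\Pi(Q)$ to be compatible with that of $\Pi(P)$; then Lemma~\ref{lem:almostproj} applies to the hierarchy $\rho(P,\Pi(P)) = \rho'$. Concretely: since $d_Z(\Pi'(Q),\Pi(P)) \ge C$ and $\Pi'(Q)$ lies on $\rho'$, the pants decomposition $\Pi'(Q)$ is within bounded distance of a point $Q' = \rho'(i)$ of the hierarchy $\rho(P,\Pi(P))$, and Lemma~\ref{lem:almostproj} gives $d(\Pi(Q'),\Pi(P)) \le K'$. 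It remains to bound $d(\Pi(Q),\Pi(Q'))$; for this I would invoke the coarse Lipschitz property of $\Pi$ together with the distance formula~(\ref{eq:distance}), checking via Lemma~\ref{lem:closestpoint} and the time-order lemma that $Q$ and $Q'$ have uniformly comparable subsurface projections on every component domain that could contribute — the hypothesis $d_Z(\Pi'(Q),\Pi(P)) \ge C$ is exactly what guarantees $\Pi'(Q)$ is not near the $\Pi(P)$ end of $\rho'$, so that no large projection to a domain transverse to the construction domain of $\Pi(P)$ can appear. Summing gives $d_Z(\Pi(P),\Pi(Q)) \le d_Z(\Pi(P),\Pi(Q')) + d_Z(\Pi(Q'),\Pi(Q)) \le K' + (\text{Lipschitz bound})=:K'''$.

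Finally, (4) is the same mechanism as (1) but applied to an arbitrary domain $W$ instead of $Z$: the inequality $d_W(Q,\Pi(P)) \ge d_W(Q,\Pi'(Q)) - d_W(\Pi'(Q),\Pi(P)) \ge d_W(Q,\Pi'(Q)) - (\text{bound on }d_W(\Pi'(Q),\Pi(P)))$, where the subtracted term is controlled because $\Pi'(Q)$ and $\Pi(P)$ both lie on $\rho'$ and hence their $W$-projections differ by at most $C$ after a further additive constant coming from the fourth property of hierarchies applied to $\rho'$ — so choosing $C$ large enough relative to that constant and requiring $d_W(Q,\Pi'(Q)) \ge C + 2M_1$ yields $d_W(Q,\Pi(P)) \ge 2M_1$. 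The main obstacle I anticipate is (3): one must be careful that the domain used in the construction of $\Pi(Q)$ is genuinely forced to match the picture for $\Pi(P)$ — this is where the hypothesis $d_Z(\Pi'(Q),\Pi(P)) \ge C$ must be fed into Lemma~\ref{lem:time:order} with the right choice of reference pants decompositions, and where a naive triangle-inequality estimate would lose control of a large projection to some transverse domain. Everything else is bookkeeping with the constants $M_1, M_2, k, \delta, K, K'$ already in play.
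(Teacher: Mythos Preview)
Your plan contains a genuine gap at the very first step. In your argument for (1) you assert that $d_Z(Q,\Pi'(Q))$ is bounded by a universal constant, citing Lemma~\ref{lem:closestpoint} (second bullet) and the hierarchy-path properties for $\rho'$. But the second bullet of Lemma~\ref{lem:closestpoint} bounds $d_Z(\pi_{g_{Z'}}(P),P_{j_{Z'}})$ only for $P$ lying on the \emph{original} hierarchy $\rho$, not for an arbitrary pants decomposition $Q$. Relative synchronization of $\Pi'$ gives you $d_Z(\Pi'(Q),\pi_{g_{Z'}}(Q))\le K$, but says nothing about $d_Z(Q,\pi_{g_{Z'}}(Q))$: the point $\pi_Z(Q)$ can sit arbitrarily far from the geodesic $g_{Z'}$ in $\C(Z)$, so $d_Z(Q,\Pi'(Q))$ is genuinely unbounded. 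The same issue recurs in your (3), where you need to compare $\Pi(Q)$ with $\Pi(Q')$ and implicitly want $Q$ and $Q'=\Pi'(Q)$ to be close in the relevant projections; and in your (4), where you claim $d_W(\Pi'(Q),\Pi(P))$ is controlled just because both points lie on $\rho'$ --- they can be far apart in $\C(W)$ whenever $W$ is a component domain of $\rho'$.

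The paper's argument avoids all of this by working entirely inside the hyperbolic space $\C(Z)$ with a thin-quadrilateral argument. One sets up the quadrilateral on the four points $\pi_Z(Q)$, $\pi_Z(Q')$, $\pi_Z(\Pi(P))$, $\pi_Z(\Pi(Q))$ and uses two anchoring facts: relative synchronization of $\Pi$ forces the nearest-point projection of $\pi_Z(Q)$ to the geodesic $g$ from $\pi_Z(\Pi(P))$ to $\pi_Z(\Pi(Q))$ to land near $\pi_Z(\Pi(Q))$, while Lemma~\ref{lem:closestpoint} forces the nearest-point projection of $\pi_Z(Q')$ to $g$ to land near $\pi_Z(\Pi(P))$. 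Hyperbolicity of $\C(Z)$ then yields (1), (2), and (3) simultaneously once $d_Z(Q',\Pi(P))$ exceeds a threshold $C$. For (4) the paper runs a dichotomy: if $d_W(Q',\Pi(P))\ge C$ apply (1) with $W$ in place of $Z$; if $d_W(Q',\Pi(P))\le C$ the triangle inequality with the hypothesis $d_W(Q,Q')\ge C+2M_1$ finishes it. Your triangle-inequality instinct for (4) is the right one, but it needs this dichotomy rather than a blanket bound on $d_W(Q',\Pi(P))$.
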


\begin{proof}
Let $Q'=\Pi'(Q)$.  In order to bound the distance in $\calC(Z)$
between the projections of $\Pi(P)$ and $\Pi(Q)$, we can assume that
$Z$ is a component domain of $\rho$ with geodesic $g_Z$. We consider
the geodesic $g$ in $\C(Z)$ joining $\pi_Z(\Pi(P))$ and
$\pi_Z(\Pi(Q))$.  By the fourth property of hierarchies and the
hyperbolicity of $\C(Z)$, there is a constant $\delta'$ such that the
geodesic $g$ is Hausdorff distance $\delta'$ from $g_Z$.  By relative
synchronization and Lemma~\ref{lem:closestpoint}, there is a
$\delta''$ such that the closest point projection of $\pi_Z(Q')$ on
$g$ is within $\delta''$ of $\pi_Z(\Pi(P))$ and the closest point
projection of $\pi_Z(Q)$ on $g$ is within $\delta''$ of
$\pi_Z(\Pi(Q))$.

We consider the quadrilateral in $\C(Z)$ with vertices $$\{
\pi_Z(Q),\pi_Z(Q'),\pi_Z(\Pi(P)),\pi_Z(\Pi(Q)) \}.$$
The hyperbolicity of $\C(Z)$ implies that there are constants $C$ and
$\delta'''$ depending on $\delta'$ and $\delta''$ so that for
$d_Z(Q',\Pi(P)) > C$ we have $$d_Z(Q,\Pi(P)) \geq 2M_1 \ \ \
\text{and} \ \ \ d_Z(Q,\Pi(Q))\geq 2M_1 +K,$$ 
and the geodesic $h$ joining $\pi_Z(Q)$ and $\pi_Z(\Pi(Q))$ passes
within $\delta'''$ of $\Pi(P)$.  We will choose $C$ so that
\begin{equation}\
\label{eq:Cbound}
C> (4+2k)\delta+2M_1(2k+1)+K+M_2,
\end{equation} the constant on the right side coming from Lemma~\ref{lem:closestpoint}.  Taken together with the
fact that closest point projection of $\pi_Z(Q)$ on $g$ is within
$\delta''$ of $\pi_Z(\Pi(Q))$, we obtain the bound
$$d_Z(\Pi(P),\Pi(Q)) < 2
\delta''' +
\delta''$$ by an application of the triangle inequality.  Setting $K'''
= 2\delta''' + \delta''$, we have the first three statements of the
Lemma.

To prove the last statement, notice again by the first part that for
$d_W(Q',\Pi(P))\geq C$, we have $$d_W(Q,\Pi(P))\geq 2M_1.$$ If
$d_W(Q',\Pi(P))\leq C$, then the assumption $d_W(Q,Q')\geq C+2M_1$
gives $$d_W(Q,\Pi(P)\geq 2M_1$$ by the triangle inequality.
\end{proof}

\begin{proof}  [Proof of Theorem~\ref{th:projection}]
To conclude the proof of Theorem~\ref{th:projection} we seek a triple
$(a,b,c)$ so that the map $\Pi$ satisfies an $(a,b,c)$-contraction
property.  Choose $b$ so that $$b <
\frac{1}{2k'}$$ where $k'$ is the Lipschitz constant for the map
$\Pi'$.  Then since $\Pi'(P)=P$ we have
\begin{eqnarray*}
d(P,\Pi'(Q)) &=& d(\Pi'(P),\Pi'(Q)) \\ &\leq& k'd(P,Q) \\ &\leq&
bk'd(P,\Pi(P)) \\ &<& \frac{d(P,\Pi(P))}{2}.
\end{eqnarray*}

This implies $$d(\Pi'(Q),\Pi(P))>\frac{d(P,\Pi(P))}{2}.$$ Set
$Q'=\pi'(Q)$.  For $a$ large enough, the distance formula
(\ref{eq:distance}) guarantees the existence of a domain $Z$ for the
hierarchy path $\rho'$ such that $$d_Z(Q',\Pi(P))\geq C+2M_1+K$$ where
$C$ is the constant given by Lemma~\ref{lem:lastagain}.  Let $g_{Z'}$
the geodesic in $\calC(Z)$ in $\rho'$, recall $g_{Z'}$ has terminal
parameter value $j_{Z'}$.  Then the fourth property of hierarchies
implies
\begin{equation}
\label{eq:far}
d_Z(Q',P_{j_Z'})\geq C+M_1+K
\end{equation}

By the distance formula (\ref{eq:distance}), bounding
$d(\Pi(P),\Pi(Q))$ is equivalent to bounding $d_W(\Pi(P),\Pi(Q))$ for
each component domain $W$ of $\rho$ and by Lemma~\ref{lem:almostproj}
this is equivalent to bounding $d_W(\Pi(Q),\Pi(Q'))$.  We first note
that if $$d_W(Q,Q')\leq C + 2M_1+K$$ then $d_W(\Pi(Q),\Pi(Q'))\leq
k'(C+2M_1+K)$, since the map $\Pi$ is $k'$-Lipschitz.

Thus we can restrict to domains $W$ with $d_W(Q,Q')\geq C + 2M_1+K$.
\begin{claim}  If $d_W(\Pi(P),\Pi(Q))\geq 2M_1$, i.e. $W \in D(\rho)$, then 
 $Z \subseteq W$.
\end{claim}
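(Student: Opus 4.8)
The plan is to prove $Z\subseteq W$ by elimination: $Z$ can be neither disjoint from $W$, nor transverse to $W$, nor a proper supersurface of $W$. Throughout I work in the non-separated case (so that $W$ is a non-separated component domain of $\rho$, the separated case being governed by condition~(\ref{separated:projection}) of the theorem), and I may assume $Z\ne W$, the case $Z=W$ being trivial.

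First I would assemble the estimates already at hand. Since $d_Z(\Pi'(Q),\Pi(P))\ge C+2M_1+K\ge C$, the first three conclusions of Lemma~\ref{lem:lastagain} give $d_Z(Q,\Pi(P))\ge 2M_1$, $d_Z(Q,\Pi(Q))\ge 2M_1+K$, and $d_Z(\Pi(P),\Pi(Q))\le K'''$; and since by hypothesis $d_W(Q,Q')\ge C+2M_1+K\ge C+2M_1$, its fourth conclusion gives $d_W(Q,\Pi(P))\ge 2M_1$. As $2M_1>M_2$, property~(\ref{component:domain}) of hierarchy paths shows that $Z$ and $W$ are both component domains of $\sigma:=\rho(Q,\Pi(P))$; moreover $W\in D(\rho)$ makes $W$ a component domain of $\rho$. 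Since $\zeta(S)=3$, any non-annular subsurface disjoint from $W$ would have complexity $1$ and would exhibit $W$ itself as a separated domain, which is excluded; hence $Z$ is not disjoint from $W$, and the only remaining possibilities are that $Z$ and $W$ are transverse, that $Z\subsetneq W$, or that $W\subsetneq Z$.

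Next I would exclude the transverse case via the Time Order Lemma. If $Z$ and $W$ intersect transversely, then by property~(\ref{boundary:projection}) they are time-ordered in $\sigma$. If $Z\prec_t W$, apply Lemma~\ref{lem:time:order} to $\sigma$ with $Y_1=Z$, $Y_2=W$, $P_1=Q$, $P_2=\Pi(P)$ and auxiliary decomposition $R=Q'$: the hypotheses $d_Z(Q,\Pi(P))\ge 2M_1$, $d_W(Q,\Pi(P))\ge 2M_1$, $d_Z(Q',\Pi(P))>2M_1$ hold, and the conclusion $d_W(Q,Q')\le 2M_1$ contradicts $d_W(Q,Q')\ge C+2M_1+K$. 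If instead $W\prec_t Z$, apply the lemma with $Y_1=W$, $Y_2=Z$ and $R=\Pi(Q)$ (we may take the threshold in the claim strictly larger than $2M_1$, so $d_W(\Pi(Q),\Pi(P))>2M_1$): now the conclusion $d_Z(Q,\Pi(Q))\le 2M_1$ contradicts $d_Z(Q,\Pi(Q))\ge 2M_1+K$. So $Z$ and $W$ are not transverse, and hence nested: either $Z\subseteq W$ or $W\subsetneq Z$.

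The remaining step, excluding $W\subsetneq Z$, is the one I expect to be the real obstacle. Here I would use that $Z$ was produced as a domain of $\rho'$ with $d_Z(Q',P_{j_Z'})\ge C+M_1+K$ by inequality~(\ref{eq:far}), which together with property~(\ref{endpoint:projection}) ($d_Z(P_{j_Z'},\Pi(P))\le M_1$) makes $g_{Z'}$ arbitrarily long once $C$ is chosen large, with $P_{j_Z'}$ near its terminal vertex. If $W\subsetneq Z$, then $d_W(\Pi(P),\Pi(Q))\ge 2M_1\ge M_2$ forces, by the bounded–geodesic–image form of property~(\ref{component:domain}), the $\C(Z)$–geodesic from $\pi_Z(\Pi(P))$ to $\pi_Z(\Pi(Q))$ — of length at most $K'''$ by Lemma~\ref{lem:lastagain} — to pass within $1$ of $\bdry W$, so $d_Z(\bdry W,P_{j_Z'})\le M_1+K'''+1$, while $d_Z(Q',\bdry W)\ge C+K-K'''-1$ is arbitrarily large. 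I would then compare $g_{Z'}$ with the geodesic $g_Z$ of $\rho$ via Lemma~\ref{lem:closestpoint} (when $Z$ is a component domain of $\rho$; when it is not, $\rho$ projects with diameter $\le M_2$ into $\C(Z)$ and the conclusion is immediate) to locate $\bdry W$ within bounded distance of $\pi_{g_Z}(P)$, and feed this back into the construction of $\Pi(P)$: $W$ would then meet both defining conditions of the family $\calW(P,\rho)$ and sit on a definite side of $P$, forcing $\Pi(P)$ and $\Pi(Q)$ to agree up to bounded error in $\C(W)$ and contradicting $d_W(\Pi(P),\Pi(Q))\ge 2M_1$. The delicate point throughout is the order in which the constants are chosen — $C$ must dominate $K'''$, the constants of Lemma~\ref{lem:closestpoint}, and $10kM_1$ — and getting this bookkeeping right, rather than any single inequality, is where the work lies.
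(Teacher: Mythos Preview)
Your elimination of the transverse case is the paper's argument verbatim: both orderings are dispatched via Lemma~\ref{lem:time:order} with auxiliary point $R=Q'$ (for $Z\prec_t W$) and $R=\Pi(Q)$ (for $W\prec_t Z$), yielding contradictions with $d_W(Q,Q')\ge C+2M_1+K$ and $d_Z(Q,\Pi(Q))\ge 2M_1+K$ respectively. Your handling of the disjoint case via the separated-domain observation is a reasonable addition the paper leaves implicit.

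Where you diverge is the case $W\subsetneq Z$, and here your route is both longer than the paper's and not complete. The paper never reopens the construction of $\Pi$; it obtains a direct contradiction with inequality~(\ref{eq:far}) in four lines. The key observation you are missing is that since $W\in D(\rho)$ is a component domain of $\rho$, the curve $\partial W$ appears in a pants decomposition lying on $\rho$, so the second conclusion of Lemma~\ref{lem:closestpoint} bounds $d_Z\bigl(\pi_{g_{Z'}}(\partial W),\,P_{j_{Z'}}\bigr)$ by the constant on the right of~(\ref{eq:Cbound}), which was arranged to be strictly less than $C$. On the other side, relative synchronization of $\Pi'$ gives $d_Z\bigl(Q',\,\pi_{g_{Z'}}(\partial W)\bigr)\le K$. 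Adding these, $d_Z(Q',P_{j_{Z'}})<C+K$, contradicting~(\ref{eq:far}). That is the whole argument.

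Your plan to instead locate $\partial W$ via bounded geodesic image and then ``feed it back'' into the family $\calW(P,\rho)$ has a genuine gap: membership in $\calW(P,\rho)$ requires $d_W(P_1,P_2)>10kM_1$, and this does not follow from the Claim's hypothesis $d_W(\Pi(P),\Pi(Q))\ge 2M_1$ --- nor can it be forced by enlarging $C$, since $C$ governs $Z$, not $W$. Even granting that, you would still owe an argument that $\Pi(P)$ and $\Pi(Q)$ land on the same side of $I_W$ in the $<,>$ ordering. The paper's approach sidesteps all of this by working entirely in $\C(Z)$ with the already-established Lemma~\ref{lem:closestpoint}.
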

By way of contradiction first assume $W$ transversely intersects $Z$.
By the last conclusion of Lemma~\ref{lem:lastagain} we have
$$d_W(Q,\Pi(P))\geq 2M_1$$ and $$d_Z(Q,\Pi(P))\geq 2M_1.$$ Since $W$
and $Z$ transversely intersect, they are time ordered in any hierarchy
$\rho(Q,\Pi(P))$.  If $Z\prec_t W$, then since $d_Z(Q',\Pi(P))\geq
2M_1$, an application of Lemma~\ref{lem:time:order} with $Q'=R$ from
that lemma says that $d_W(Q,Q')\leq 2M_1$, a contradiction.

Thus $W\prec_t Z$ in $\rho(Q,\Pi(P))$.  We apply
Lemma~\ref{lem:time:order} again using the hierarchy $\rho(Q,\Pi(P))$
this time with $R=\Pi(Q)$.  Since $d_Z(Q,\Pi(Q))\geq 2M_1$ we conclude
that $d_W(\Pi(P),\Pi(Q))\leq 2M_1$, a contradiction.  We have ruled
out $W$ transversely intersecting $Z$.

Next suppose $W\subsetneq Z$.  By the $K$ relative synchronization of
the projection $\Pi'$, we have, $$d_Z(Q',\pi_{g_Z'}(\bdry W))\leq K$$
so by (\ref{eq:far}) and the triangle inequality,
$$d_Z(\pi_{g_Z'}(\bdry W),P_{j_Z'})\geq C+M_1.$$

On the other hand, since $W$ is assumed to be a component domain of
$\rho$, by the second conclusion of Lemma~\ref{lem:closestpoint}
$$d_Z(\pi_{g_Z'}(\bdry W), P_{j_Z'})\leq
(4+2k)\delta+2M_1(2k+1)+M_2+K<C $$ and again we have a
contradiction. This proves the claim.

By the claim then we need only bound $d_W(\Pi(Q),\Pi(Q'))$ for
$Z\subseteq W$.  By third conclusion of Lemma~\ref{lem:lastagain} we
have the bound for $W=Z$.  The remaining possibility is $W$ contains
$Z$.  By relative synchonization, $$d_W(\Pi(P),\pi_{g_W}(\bdry Z))\leq
K,\ \ d_W(\Pi(Q),\pi_{g_W}(\bdry Z))\leq K.$$ The desired bound
$d_W(\Pi(P),\Pi(Q)))\leq 2K$ is given by the triangle inequality.

\end{proof}

\begin{proof} [Proof of  Theorem~\ref{theorem:main}]

This follows from the usual Mostow type argument.  A proof in this
context is given by Lemma 7.1 of \cite{Masur:Minsky:CCI}: by
Theorem~\ref{th:projection} a quasi-geodesic cannot stray 
far from the set $X(\rho)$ due to inefficiency outside of a bounded
neighborhood of the projection image.
\end{proof}

\begin{proof}[Proof of Theorem~\ref{thm:rel}]
By the Main Theorem, given $(K,C)$, there exists $K'$ such that any
$(K,C)$-quasi-geodesic $\phi(n)$ stays within $K'$ of $X(\rho)$ where
$\rho=\rho(P_1,P_2)$ is the hierarchy joining its endpoints $P_1,P_2$.
By relative synchonization and the distance formula
(\ref{eq:distance}) there is a constant $K_1$ depending on $K'$ such
that for all $n$, $$d(\phi(n),\Pi(\phi(n)))\leq K_1.$$

Now given a Farey graph product $X_\gamma\subset X(\rho)$ with
complementary domains $X\setminus \gamma=W\cup W^c$, let $n$ be the
maximal parameter value such that for all $j=1,\ldots, n$, $\phi(j)$
does not lie in $\calN_{K_1}(X_\gamma)$, the $K_1$ neighborhood of the
product $X_\gamma$.

Let $I_W, I_{W^c}$ the parameter intervals for $W,W^c$.  (at least one
of which is nonempty).  Now let $j\leq n$.  If $$\max(|I_W|, |I_{W^c}|
) \geq k,$$
we claim that either $\Pi(\phi(j))\in X_{\gamma'}$ where some
complementary component of $\gamma'$ is time ordered before $W$ or
$W^c$, or $\Pi(\phi(j))=\rho(j')$ where $j'$ satisfies $$j'<I_W\cup
I_{W^c}.$$

Suppose on the contrary, there exists $j_1,j_2\leq n$ such that
$$\Pi(\phi(j_1))\in X_{\gamma_1}\cup \rho(j')$$ for some complementary
component of $\gamma_1$ time ordered before $W$ or $W^c$ and
$j'<I_W\cup I_{W^c}$ and $$\Pi(\phi(j_2))\in X_{\gamma_2}\cup
\rho(j'')$$ for some complementary component of $\gamma_2$ time
ordered after $W$ or $W^c$ and $j''>I_W\cup I_{W^c}$.

Since $X_\gamma$ separates $X(\rho)$ and $\Pi(\phi(j_1))$ and
$\Pi(\phi(j_2))$ lie in distinct components of $X(\rho) \setminus
X_\gamma$, any path joining $\Pi(\phi(j_1))$ and $\Pi(\phi(j_2))$ must
enter $X_\gamma$.  Since the map $\Pi$ is $k$-Lipschitz and
$\max(|I_W|,|I_{W^c}|)\geq k$, there must be some $j_1< j_0< j_2$ such
that $\Pi(\phi(j_0))\in X_\gamma$, contrary to assumption, proving the
claim.

Then since $d(\Pi(\phi(j)),\phi(j))\leq K_1$ it follows from
(\ref{eq:distance}) and the fourth property of hiearchies, that for
all $j\leq n$, $$d_W(\phi(j),P_1)\leq K_1+M_1\ \ \ \text{and} \ \ \
d_{W^c}(\phi(j),P_1)\leq K_1+M_1.$$ This implies that the first time
$\phi$ enters a $K_1$ neighorhood of $X_\gamma$ (at time $n+1$) there
is a bound on $d_W(P_1,\phi(n+1))$ and $d_{W^c}(P_1,\phi(n+1))$ and
this in turn, by the fourth property of hierarchies implies that
$\phi$ enters this neighborhood a bounded distance from where $\rho$
does.  The same is true for the minimal parameter value $m$ where
$\phi(j) \notin \calN_{K_1}(X_\gamma)$ for all $j \ge m$.

For those $X_\gamma$ such that $\max (|I_W|, |I_{W^c}|) \le k$ we
simply note that the bound $d(\phi(n),\Pi(\phi(n)))\leq K_1$ bounds
$$\max\left(d_W(\phi(n),P_i),d_{W^c}(\phi(n),P_i)\right).$$ In
particular, $\phi(n)$ enters any neighborhood of $X_\gamma$ a
uniformly bounded distance away from where $\rho(n)$ does.  This
completes the proof.
\end{proof}

The notion of {\em strong relative hyperbolicity} with respect to a
collection of subsets, introduced for groups by Farb (see also
\cite{Brock:Farb:rank} for a metric space notion in a similar context
to this paper) finds its currently accepted form in
\cite{Drutu:Sapir:relative}, and \cite{Drutu:relhyp}.

\begin{definition}  A metric space is said to be {\em strongly
relatively hyperbolic} with respect to a collection of subsets $\calH$ if
\begin{enumerate}

\item Given $K$ there exists $M$ such that the
 intersection of $K$ neighborhoods of any two subsets from $\calH$ is $M$
 bounded.

\item Given $L,C$, there is $M$ such that for any pair of
 points $x,y$ and subset $A$ from the collection $\calH$, if $d(x,y)\geq
 3\max(d(x,A),d(y,A))$, then any $(L,C)$ quasi-geodesic between $x,y$
 crosses the $M$ neighborhood of $A$.

\item For every $k$, there
 exists $M$ such that every thick $k$-gon belongs to one of the sets
 in $\calH$.
\end{enumerate}
\label{defn:relhyp}
\end{definition}
By work of Dru\c{t}u \cite{Drutu:relhyp}, the last condition can be replaced by

\begin{enumerate}
\item[$\mathit{3^*}$] {\em For positive constants $L$ and $C$ there
are constants $M$ and $M'$, such that for any $(L,C)$-quasi-geodesic
triangle in $X$, there exists a set $A$ in the collection $\calH$ whose
$M$ neighborhood intersects the three sides of the triangle, such that
the pairs of entrance points of the sides in this neighborhood
starting from the same vertex are distance at most $M'$ apart.}
\end{enumerate}

\begin{theorem}
The space $P(S)$ is strongly relatively hyperbolic with respect to the
collection of Farey graph products $X_\gamma$.

\end{theorem}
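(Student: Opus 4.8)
The plan is to verify the three conditions of Definition~\ref{defn:relhyp} for the collection $\calH = \{X_\gamma\}$, where $\gamma$ ranges over domain-separating curves; equivalently, by Dru\c{t}u's reformulation, conditions~(1), (2), and $(3^*)$. The heavy lifting has already been done: Theorem~\ref{theorem:main} gives quasi-convexity of $X(\rho)$ together with the contracting projection $\Pi$ of Theorem~\ref{th:projection}, and the proof of Theorem~\ref{thm:rel} above extracts from this exactly the statement that $(K,C)$-quasi-geodesics between $P_1,P_2$ track $X(\rho)$ and enter/exit each $X_\gamma$-neighborhood uniformly close to where the hierarchy path $\rho$ does. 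So the present theorem is essentially a repackaging: I would state it as a corollary of Theorems~\ref{theorem:main} and~\ref{thm:rel}, and spend the proof translating their conclusions into the language of Definition~\ref{defn:relhyp}.

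First I would check condition~(1), the bounded-neighborhood-intersection property for pairs $X_{\gamma_1}, X_{\gamma_2}$ with $\gamma_1 \ne \gamma_2$. This is purely combinatorial via the distance formula~(\ref{eq:distance}): if $P$ lies in the $K$-neighborhood of both $X_{\gamma_1}$ and $X_{\gamma_2}$, then $d_{W_i}(P, \bdry\gamma_i)$ and $d_{W_i^c}(P,\bdry\gamma_i)$ are all $O(K)$ for $i=1,2$; but $\gamma_1$ and $\gamma_2$ are distinct domain-separating curves, so at least one of $\gamma_2, W_2, W_2^c$ intersects $\gamma_1$ essentially, forcing $d_Z(P,P')$ for such $Z$ between any two such $P, P'$ to be bounded by some $M(K)$, and then~(\ref{eq:distance}) caps the diameter of the intersection. (The case where the $\gamma_i$ are disjoint is handled the same way, since $\zeta(S)=3$ leaves no room for a third separated domain, by the uniqueness observation following Lemma~\ref{lem:time:order}.)

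Next, condition~(2) is precisely what the proof of Theorem~\ref{thm:rel} establishes: given $(L,C)$ and a quasi-geodesic $\phi$ from $x$ to $y$ with $d(x,y) \ge 3\max(d(x,X_\gamma), d(y,X_\gamma))$, the hierarchy path $\rho(x,y)$ must pass through a bounded neighborhood of $X_\gamma$ (otherwise the distance formula would force $d(x,y)$ to be dominated by $d(x,X_\gamma)+d(y,X_\gamma)$ up to bounded error, contradicting the separation hypothesis), and $\phi$ stays within $K_1$ of $X(\rho)$ and enters the $K_1$-neighborhood of $X_\gamma$ within bounded distance of where $\rho$ does; enlarging the neighborhood constant absorbs the comparison. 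For condition~$(3^*)$, given a quasi-geodesic triangle with vertices $P_1, P_2, P_3$, I would argue that the three hierarchy paths $\rho(P_i,P_j)$ either pairwise track within bounded distance near a common point (the ``thin triangle'' case coming from the relative hyperbolicity after coning off, i.e. the hyperbolicity of the electrified space), or else all three enter a neighborhood of a common $X_\gamma$ near mutually bounded entrance points; this is exactly the thick-$k$-gon alternative and follows by applying the projection $\Pi$ associated to one side and using the contraction property together with the synchronization conditions of Theorem~\ref{th:projection}. The entrance points of the three sides starting from a common vertex are then controlled by the same argument as in the proof of Theorem~\ref{thm:rel}.

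The main obstacle I anticipate is condition~$(3^*)$: making the dichotomy ``either the triangle is uniformly thin, or it degenerates into a single $X_\gamma$'' precise requires knowing that the coned-off (electrified) pants graph $\hat P(S)$ is $\delta$-hyperbolic, which in this setup comes from the fact that $X(\rho)$ is built from curve-complex geodesics that are already $\delta$-hyperbolic and the products $X_\gamma$ are exactly the non-hyperbolic ``blocks'' being coned. One clean route is to invoke the equivalence (proved in \cite{Drutu:Sapir:relative}) between the quasi-geodesic-triangle formulation and the asymptotic-cone characterization of relative hyperbolicity, and to check the latter directly: in every asymptotic cone of $P(S)$, the ultralimits of the $X_\gamma$'s are the pieces of a tree-graded structure. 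Verifying tree-gradedness amounts to showing that two ultralimit pieces meet in at most a point (condition~(1) above passes to the cone) and that every simple loop lies in a single piece, which is again the content of Theorems~\ref{theorem:main} and~\ref{th:projection} transported to the cone. I would present whichever of these two formulations keeps the bookkeeping lightest, most likely the quasi-geodesic-triangle one since $(3^*)$ is stated that way and the needed entrance-point estimates have already appeared in the proof of Theorem~\ref{thm:rel}.
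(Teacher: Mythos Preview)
Your overall plan---verify conditions (1), (2), and $(3^*)$ of Definition~\ref{defn:relhyp} using Theorems~\ref{theorem:main} and~\ref{thm:rel} together with the distance formula---matches the paper's approach, and your treatment of (1) and (2) is essentially what the paper does. For (1) the paper observes that if $\gamma \ne \gamma'$ then $\gamma'$ must intersect both complementary pieces $W, W^c$ of $\gamma$, so the entire set $X_{\gamma'}$ (and hence any $K$-neighborhood of it) has bounded projection to $\C(W)$ and $\C(W^c)$; the distance formula finishes. For (2) the paper gives the explicit computation: with $a,b$ the nearest points in $X_\gamma$ to $x,y$, property~(\ref{endpoint:projection}) of hierarchy paths gives $d_W(x,a), d_{W^c}(x,a) \le M_1$ and similarly for $y,b$, and then the hypothesis $d(x,y) \ge 3\max(d(x,X_\gamma),d(y,X_\gamma))$ forces $d_W(x,y)+d_{W^c}(x,y) \ge 2M_2$, so the hierarchy enters $X_\gamma$ and Theorem~\ref{thm:rel} transfers this to arbitrary quasi-geodesics.

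Where you diverge is $(3^*)$, and here you are making it harder than necessary. You anticipate needing either hyperbolicity of the electrified space or the asymptotic-cone/tree-graded characterization in order to set up a thin-versus-fat dichotomy. The paper does neither: it argues directly from subsurface projections. One simply assumes (relabelling vertices if needed) that $d_W(x,y) \ge M_2$ for some separated domain $W$ with $\bdry W = \gamma$; then the hierarchy from $x$ to $y$ enters $X_\gamma$ at a point $a$ with $d_W(x,a),\, d_{W^c}(x,a) \le M_1$, and the hierarchy from $x$ to $z$ either enters $X_\gamma$ at a point $b$ satisfying the same bounds, or never enters at all, in which case $d_W(x,z), d_{W^c}(x,z) \le M_2$. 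Either way the two ``entrance points'' from the vertex $x$ are uniformly close in $\C(W)$ and $\C(W^c)$, hence uniformly close in $P(S)$ by the distance formula, and Theorem~\ref{thm:rel} upgrades hierarchy paths to arbitrary quasi-geodesics. No coning, no cones: just the projection bounds already built into the hierarchy machinery. Your route via tree-gradedness would also work, but it imports more machinery than the situation requires.
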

\begin{proof}
We show property (1) is satisfied.  Let $X_\gamma,X_{\gamma'}$ a pair of Farey
graph products. Let $W$ and $W^c$ the separating domains with boundary
$\gamma$.  Since $\gamma'$ intersects both $W$ and $W^c$, and every
curve in the Farey graph product $X_{\gamma'}$ is disjoint from
$\gamma'$, the projection of the entire Farey graph product
$X_{\gamma}$ to $W$ and $W^c$ lies at bounded distance from the
projection of $\gamma'$.  Since the projection map is Lipschitz, the
same is true of a $K$ neighborhood of $X_{\gamma'}$.  This together
with the distance formula (\ref{eq:distance}) gives the first
condition.

To verify (2), we can assume $$d(x,y)\geq 6K'(M_1+M_2)+3K'C.$$ Again
let $W$ and $W^c$ be the components of the complement of $\gamma$.  Let $a\in
X_\gamma$ the closest point to $x$ and $b\in X_\gamma$ the closest
point to $y$.  By the fourth property of hierarchies, $$d_W(x,a)\leq
M_1,\ \ \ d_{W^c}(x,a)\leq M_1$$ with the same inequalities with $y$
and $b$.  Then we have
\begin{eqnarray*}
d_W(x,y)+d_{W^c}(x,y) &\geq& 
d_W(a,b)+d_{W^c}(a,b)-2M_1 \\
&\geq& 
\frac{d(a,b)}{K'}-C-2M_1 \\
&\geq& \frac{d(x,y)}{3K'}-C-2M_1\\
&\geq& 2M_2
\end{eqnarray*}
Then by the first property of hierarchies, any hierarchy joining $x$
and $y$ passes through $X_\gamma$, and therefore by
Theorem~\ref{thm:rel} any $(L,C)$-quasi-geodesic passes through a
bounded neighborhood of $X_\gamma$.  If

To verify $(3^*)$, without loss of generality assume $d_W(x,y)\geq
M_2$ for some separated domain $W$. Then any hierarchy joining $x$ and
$y$ passes through the corresponding Farey graph product $X_\gamma$ at
a point $a$ that satisfyies $d_W(x,a)\leq M_1$ and $d_{W^c}(x,a)\leq
M_1$.  If $d_W(x,z)\geq M_2$ or $d_{W^c}(x,z)\geq M_2$ the same
estimates hold for the entry point $b$ for any hierarchy joining $x$
and $z$.  The triangle inequality then bounds the distance between $a$
and $b$ in these domains.  If a hierarchy joining $x$ and $z$ does not
enter the Farey graph product, one has the bound $M_2$ on $d_W(x,z)$
and $d_{W^c}(x,z)$. Thus in either case we have a bound on the
distance between $a$ and $b$ in a fixed neighborhood of $X_\gamma$.
Again by Theorem~\ref{thm:rel} this is true for any
$(L,C)$-quasi-geodesic.
\end{proof}

\begin{proof}  [Proof of  Theorem~\ref{thm:quasiflats}]

Suppose $\phi:\RR^2\to P(S)$ is a $(K_0,C_0)$-quasi-isometric
embedding.  For each $R>0$ join $\phi(-R,0)$ to $\phi(R,0)$ by a
hierarchy path $\rho_R$.  Now consider for $|R_1|\leq R$, the
$\sqrt{2}$ quasi-geodesic $\sigma_{R,R_1}$ in $\RR^2$ consisting of
segments joining $(-R,0)$ to $(-R,R_1)$, $(-R,R_1)$ to $(R,R_1)$ and
$(R,R_1)$ to $(R,0)$.

Its image $\phi(\sigma_{R,R_1})$ is a $K_0\sqrt{2}+C_0$ quasi-geodesic
joining $\phi(-R,0)$ and $\phi(R,0)$.  Since $\phi$ is a
$(K_0,C_0)$-quasi-isometry, except for an initial and final segment on
each of length $R'$
$$d(\phi(\sigma_{R,R_1}),\rho_R)>\frac{R'}{K_0}-C_0.$$ Since
$\phi(\sigma_{R,R_1})$ must remain a bounded distance from $X(\rho_R)$
it follows that for $\frac{R'}{K_0}-C_0$ sufficiently large, by
Theorem~\ref{thm:rel} these points on $\phi(\sigma_{R,R_1})$ must lie
in a bounded neighborhood of the product of Farey graphs $\{ X_\gamma
\st \gamma
\in \rho_R \}.$  

For $R'$ chosen sufficiently large, but fixed the fact that bounded
neighborhoods of Farey graph products have bounded intersection
guarantees that this subset of $\phi(\sigma_{R,R_1})$ must lie within
a bounded neighborhood of a single $X_\gamma$, and in addition, $\phi$
must enter and exit this neighborhood a bounded distance from where
$\rho_R$ enters and exits the neighborhood.  The above statement is
true for arbitrary $R$, and therefore by enlarging $R$ while keeping
$R_1$ fixed, we conclude there is a single $X_\gamma$ such that the
image of the entire horizontal line $y=R_1$ lies in a fixed
neighborhood of $X_\gamma$.  Since this is true for arbitrary $R_1$
larger than a fixed size, the entire quasi-flat must lie within a
bounded distance of a single $X_\gamma$.
\end{proof}

\begin{proof}  [Proof of  Corollary~\ref{cor:rank}]
We claim there is no quasi-isometric embedding $$\varphi \colon \RR^3
\to P(S).$$ 
To see this, note that by Theorem~\ref{thm:quasiflats} $\varphi$ maps
the $x$-$y$ plane to a bounded neighborhood of a single Farey graph
product $X_\gamma$ and the $y$-$z$ plane to a bounded neighborhood of
a product $X_{\gamma'}$.  Since these planes meet along a line, we
have that $X_\gamma$ and $X_{\gamma'}$ have uniform neighborhoods with
infinite diameter intersection, which implies that
$\gamma=\gamma'$. Composing with the nearest point projection to
$X_\gamma$ then, we have a quasi-isometric embedding of $\RR^3$ into a
product of Farey-graphs, which is impossible by [KL].

\end{proof}

\section{The boundary of the Weil-Petersson metric}
\label{section:catzero}
In low complexity cases, when the pants graph is Gromov hyperbolic,
the collection of asymptote classes of geodesic rays is basepoint
invariant, and corresponds to the usual {\em Gromov boundary} of the
Gromov hyperbolic space.

For a general $\CAT(0)$ space the asymptote class of an infinite
geodesic ray remains a basepoint invariant notion (see
\cite{Bridson:Haefliger:npc}).  We find that the relative stability of
quasi-geodesics in $P(S)$ when $\zeta(S) = 3$ provides for sufficient
control over geodesic rays in the Weil-Petersson metric to give a
description of the $\CAT(0)$ boundary in this setting as well.

To this end, we briefly recall some standard properties of the the
Weil-Petersson metric and its completion.  For more details, we direct
the reader to \cite{Wolpert:compl} and \cite{Brock:wp}.

\bold{The Weil-Petersson completion.}  It is due to Wolpert and Chu that the Weil-Petersson metric is not
complete.  Masur examined the structure of the completion
$\overline{\Teich(S)}$ and found a natural correspondence between the
completion and the {\em augmented Teichm\"uller space}, consisting of
marked Riemann surfaces with nodes corresponding to a pairwise
disjoint collection of simple closed curves on $S$ that have been
pinched.

The augmented Teichm\"uller space has a stratified structure organized
by simplices in the curve complex.  This structure is most easily
described via the notion of {\em extended Fenchel-Nielsen
coordinates} as follows: given a maximal simplex $\sigma$ in the curve complex
$\calC(S)$, with vertex set $\sigma^\circ = \{\alpha_1,\ldots,
\alpha_{\zeta(S)}\}$ the usual associated length-twist Fenchel-Nielsen
coordinates  for a surface $X \in \Teich(S)$ are given by the product
$$\Pi_{\alpha \in \sigma^\circ} (\ell_\alpha(X), \theta_\alpha(X)) \in \reals^{\zeta(S)}_+ \times
\reals^{\zeta(S)}$$
indicating that $X$ is assembled from hyperbolic three-holed spheres
with geodesic boundary lengths $\ell_\alpha(X)$ and twist parameters
$\theta_\alpha(X)$, $\alpha \in \sigma^\circ$ (see
\cite{Imayoshi:Taniguchi:book}).  Then the extended Fenchel-Nielsen
coordinates corresponding to $\sigma$ parameterize the subset of
$\overline{\Teich(S)}$ with `nodes along $\sigma$' by allowing the
parameters $\ell_\alpha(X) = 0$, and imposing the equivalence relation
$$(0,\theta) \sim (0, \theta')$$ for coordinates
$(\ell_\alpha,\theta_\alpha)$ where $\ell_\alpha$ vanishes.

For any subsimplex $\eta \subset \sigma$, then, the {\em
$\eta$-stratum} $\Teich_\eta(S)$ refers to the locus 
$$\{ \ell_\alpha(X) = 0 \iff \alpha \in \eta^\circ \},$$
in other words, the subset of $\overline{\Teich(S)}$ where precisely
the curves corresponding to the vertices of $\eta$ have vanishing
length functions.  The $\eta$-stratum has the natural structure of a
(possibly empty) product of Teichm\"uller spaces of the complementary
subsurfaces $Y \subset S$ of non-zero Teichm\"uller dimension in the
complement  $S \setminus \{\alpha_1 , 
\ldots, \alpha_k\}$ of the simple closed curves $\{\alpha_i\}$
corresponding to the vertices of $\eta$.

\bold{Asymptote classes and hierarchy paths.}  We first remark that there is a natural invariant of the asymptote
class of a half-infinite hierarchy path $\rho(n)$, $n \in \natls$, in
the pants graph, which we will call a {\em boundary lamination}.
Indeed, the collection of subsurfaces $W
\subset S$ for which
$$\diam_W(\rho([0,n])) \to \infty $$ as $n \to \infty$ form a pairwise
disjoint collection of subsurfaces of $S$.  Since each is a component
domain for the hierarchy path $\rho(n)$, each carries a geodesic $g_W
\subset \calC(W)$ of infinite length.  This geodesic is asymptotic to
a geodesic lamination $\lambda_W$ (filling $W$) in the Gromov boundary
$\bdry \calC(W)$ (see \cite{Klarreich:boundary}), so that for all $n$
sufficiently large, each $\rho(n)$ contains a curve in $g_W$.

By the distance formula~\ref{eq:distance}, if $\rho'(n)$ is another
hierarchy path for which
$$d(\rho(n),\rho'(n)) < D$$ for all $n$, then 
$d_W(\rho(n),\rho'(n)) < D'$ for some $D'$, from which it follows that
$\rho'(n)$ contains curves in $\calC(W)$ asymptotic to $\lambda_W$.
Hence the union of these $\lambda_W$ forms a geodesic lamination on
$S$ which is an invariant of the asymptote class of $\rho(n)$.

We note that the case when $\zeta(S) \le 2$, the asymptote class is
uniquely determined by this lamination, since in these low complexity
cases the boundary lamination associated to a hierarchy path is
connected, and any two hierarchy paths with the same boundary
lamination lie a bounded distance apart: this can be seen directly
from property~\ref{endpoint:projection} of the definition of hierarchy
paths.  The distance formula~\ref{eq:distance}, guarantees that for
any proper subsurface $Y \subsetneq S$, the projections
$\pi_Y(\rho(n))$ begin at $\pi_Y(\rho(0))$ and lie at a bounded
distance from the geodesic joining $\pi_Y(\rho(0))$ to
$\pi_Y(\lambda)$ (where this geodesic is infinite if $\lambda$ is a
lamination in $Y$).

To relate this discussion to Weil-Petersson geodesics, we begin by
associating to each a hierarchy path $\rho$ (via
Theorem~\ref{theorem:main}) and then associating to that the
corresponding boundary lamination for the asymptote class of $\rho$ in
$P(S)$.

Let $\{X(t)\}_{t= 0}^\infty$ be a geodesic in the Weil-Petersson
metric.  Then by Theorem~1 of \cite{Brock:wp} the geodesic $X(t)$
describes a quasi-geodesic $\{P_n\}_{n=0}^\infty$ in $P(S)$ by taking
its image under the quasi-isometry $$ Q \colon \Teich(S) \to P(S).$$

By Theorem~\ref{theorem:main}, there is a hierarchy path $\rho(n)$, so
that the quasi geodesic $\{P_n\}$ stays a bounded distance from the
associated set $X(\rho).$  To fix attention on the underlying pants
decomposition we adopt the notation 
$$\rho(n) = Q_n.$$

As the ray $X(t)$ is half-infinite, the path $Q_n$ has the
property that for some component domain $W$, we have
$$\diam_W(\{Q_n\}_{n=0}^\infty) = \infty$$ and, moreover, that if
this property holds for more than one component domain, then the
corresponding two subsurfaces are complementary separated domains.

Since each domain $W$ for $\rho$ with this property
carries a unique geodesic $g_W$, it also carries a corresponding
boundary point $\lambda$ in $\bdry \calC(W)$ to which $g_W$ is
asymptotic.  Given $\rho$ we call the union of such boundary points a
{\em boundary lamination} for $\rho$.  Note that such laminations are
{\em purely irrational}: a boundary lamination contains no simple
closed curves.  When this union is disconnected, we associate real
weights to each component up to scale.

The boundary laminations are topologized as follows: given a
sequence $\lambda_n$ of boundary laminations, 
we say $\lambda_n$ converges to $\lambda$ if
either

\begin{enumerate}
\item there is a single domain $W$ and a connected $\lambda \in \bdry
\calC(W)$ for which 
$\pi_W(\lambda_n) \to \lambda$, or
\item there are two complementary separated domains $W$ and $W^c$
in $S$ for which $\pi_W(\lambda_n)$ converges to $\lambda_W \in \bdry
\calC(W)$ and 
$\pi_{W^c}(\lambda_n)$ converges to $\lambda_{W^c} \in \bdry \calC(W^c)$ and
$$\lim_{n \to \infty} \frac{d_W(P,\lambda_n)}{d_{W^c}(P,\lambda_n)} =
m$$ in which case we have
$$\lambda  = m \lambda_W + \lambda_{W^c}$$
if $m \in (0,\infty)$,  
$$\lambda = \lambda_W$$ if 
$m = \infty$ and
$$\lambda = \lambda_{W^c}$$ if $m = 0$.
\end{enumerate}

Then we have the following.
\begin{theorem}
Let $S$ be a surface for which $\zeta(S) \le 3$.  Then the $\CAT(0)$
boundary of the Weil-Petersson metric is homeomorphic to the space of
boundary laminations.
\end{theorem}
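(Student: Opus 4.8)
The plan is to construct an explicit bijection $\Phi$ from the $\CAT(0)$ boundary of $\overline{\Teich(S)}$ to the space of boundary laminations, and then to check that $\Phi$ and $\Phi^{-1}$ are continuous in the cone topology; the combinatorial input will be Theorems~\ref{theorem:main} and~\ref{thm:rel}, and the analytic input will be the structure of $\overline{\Teich(S)}$ as a complete $\CAT(0)$ space together with Wolpert's asymptotics and the convexity of length functions along Weil--Petersson geodesics \cite{Wolpert:compl}. To \emph{construct $\Phi$}, represent an asymptote class by a Weil--Petersson geodesic ray $r\colon[0,\infty)\to\overline{\Teich(S)}$ based at a fixed $X_0$, push $r$ forward under the quasi-isometry $Q\colon\Teich(S)\to P(S)$ of \cite{Brock:wp}, and apply Theorem~\ref{theorem:main} to replace the resulting quasi-geodesic ray by a half-infinite hierarchy path $\rho=\rho(r)$ staying a bounded distance from it. As in the discussion of asymptote classes and hierarchy paths above, the component domains $W$ of $\rho$ with $\diam_W(\rho)=\infty$ form either a single subsurface or a pair of complementary separated domains; each carries a geodesic $g_W$ converging to some $\lambda_W\in\bdry\calC(W)$, and in the disconnected case the ratio $d_W(X_0,\rho(n))/d_{W^c}(X_0,\rho(n))$ converges, by convexity of length functions along $r$, to a slope $m$. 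Setting $\Phi(r)$ equal to the resulting boundary lamination gives a well-defined map: two hierarchy paths a bounded distance apart carry the same boundary lamination (exactly the invariance established above via the distance formula~(\ref{eq:distance})), so the choice of $\rho$ is immaterial; and if $r'$ is asymptotic to $r$ then $Q\circ r'$ and $Q\circ r$ are a bounded Hausdorff distance apart, so by Theorem~\ref{theorem:main} the sets $X(\rho')$ and $X(\rho)$ are, hence so are $\rho'$ and $\rho$, and $\Phi$ descends to asymptote classes.

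For \emph{injectivity}, suppose $\Phi(r_1)=\Phi(r_2)=\lambda$; I would show $d(r_1(t),r_2(t))$ is bounded on $[0,\infty)$, after which convexity of this function along the $\CAT(0)$ geodesics $r_1,r_2$ forces it to be non-increasing, so $r_1$ and $r_2$ are asymptotic. The boundedness is checked in $P(S)$ using the distance formula~(\ref{eq:distance}): for a subsurface $Y$ that is not a component domain of $\rho_i$, property~\ref{component:domain} of hierarchy paths keeps $\pi_Y(\rho_i(n))$ near $\pi_Y(\rho_i(0))$, so the $Y$-term is controlled by a fixed quantity and is nonzero for only finitely many $Y$; for the finitely many domains with infinite-diameter image the geodesics $g_W^{(1)},g_W^{(2)}$ converge to the same $\lambda_W$, and in the disconnected case the equality of slopes forces the two product paths into $X_\gamma$ to track one another, so these terms stay bounded; and the remaining component domains have uniformly bounded-diameter image (projections of laminations being uniformly bounded), where one also uses that $Q$ is a quasi-isometry so that $\rho_1$ and $\rho_2$ are, up to bounded error, synchronously parametrized --- ruling out the kind of timing mismatch that could otherwise inflate an individual term. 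Summing, $d(\rho_1(n),\rho_2(n))$ is bounded, hence so is $d(r_1(t),r_2(t))$.

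For \emph{surjectivity}, which I expect to be the main obstacle, start from a boundary lamination $\lambda$: choose a geodesic $g_W$ asymptotic to $\lambda_W$ in each relevant subsurface and, in the disconnected case, fix the relative progress along the two geodesics so as to realize the slope $m$; the Masur--Minsky construction then produces an infinite hierarchy without annuli with these data, whose resolution $\rho$ is a half-infinite hierarchy path whose boundary lamination is $\lambda$. Transport $\rho$ into $\Teich(S)$ by a quasi-inverse of $Q$, write $X_n$ for the image of $\rho(n)$, and let $\sigma_n$ be the Weil--Petersson geodesic from $X_0$ to $X_n$. The initial directions $\sigma_n'(0)$ lie in the unit sphere of the finite-dimensional tangent space $T_{X_0}\Teich(S)$, so some subsequence converges, and the resulting geodesic $\sigma$ emanating from $X_0$ has infinite length: since $d_{WP}(X_0,X_n)\to\infty$ and, by Wolpert's asymptotics, the Weil--Petersson distance from a point to the stratum pinching a curve $\alpha$ is comparable to $\sqrt{\ell_\alpha}$, the $\sigma_n$ approach the strata of $\overline{\Teich(S)}$ only in a controlled fashion, so the limit cannot terminate at a finite-distance point of the completion. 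By Theorem~\ref{thm:rel}, $Q\circ\sigma$ tracks $X(\rho)$, so it has the same infinite-diameter component domains and asymptotic geodesics, whence $\Phi(\sigma)=\lambda$. The genuinely delicate point is making this limiting argument rigorous in the face of the incompleteness and non-properness of $\overline{\Teich(S)}$; the combinatorial bookkeeping is supplied entirely by Theorems~\ref{theorem:main} and~\ref{thm:rel}.

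Finally, to see that $\Phi$ is a \emph{homeomorphism}, recall that in the complete $\CAT(0)$ space $\overline{\Teich(S)}$ a sequence $r_k$ converges to $r$ in the $\CAT(0)$ boundary exactly when $r_k(t)\to r(t)$ for every $t$ (the cone topology); applying $Q$ and Theorem~\ref{theorem:main}, this means the hierarchy paths $\rho_k$ agree with $\rho$ along longer and longer initial segments, which by property~\ref{endpoint:projection} and the distance formula~(\ref{eq:distance}) forces $\pi_W(\lambda_k)\to\pi_W(\lambda)$ in each component domain $W$ and, in the disconnected case, the divergence ratio for $\lambda_k$ to converge to the slope $m$ of $\lambda$ --- that is, convergence in the topology on boundary laminations specified above. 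Running this argument in reverse, prescribing the projections and the slope pins down the hierarchy paths up to longer and longer initial agreement, so the corresponding rays converge in the cone topology. Together with the bijectivity established in the previous steps, this yields the asserted homeomorphism.
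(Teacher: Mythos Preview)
Your overall architecture matches the paper's: define $\Phi$ via the quasi-isometry $Q$ and Theorem~\ref{theorem:main}, then establish injectivity, surjectivity, and continuity. The injectivity and continuity arguments are somewhat sketchier than the paper's but broadly on the right track; the paper handles injectivity differently, by locating specific ``checkpoint'' times (entry and exit of component domains) where the two hierarchy paths are provably close and then invoking $\CAT(0)$ convexity of $t\mapsto d(r_1(t),r_2(t))$ to interpolate, rather than attempting a uniform bound on $d(\rho_1(n),\rho_2(n))$ via the distance formula. Your synchronization claim (``$\rho_1$ and $\rho_2$ are, up to bounded error, synchronously parametrized'') would need more care, but is not obviously wrong.

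The genuine gap is in surjectivity, precisely at the step you flag as delicate but do not actually carry out. Your argument that the limiting ray $\sigma$ is infinite is a non-sequitur: the fact that $d_{WP}(X_0,X_n)\to\infty$ together with the $\sqrt{\ell_\alpha}$ asymptotic does \emph{not} prevent the directions $\sigma_n'(0)$ from converging to the direction of a finite segment terminating in a stratum, because $\overline{\Teich(S)}$ is not proper. The paper's argument here is substantial and uses ingredients absent from your proposal: one assumes $X_\infty$ has finite length $T$ with $X_\infty(T)\in\Teich_\sigma(S)$, passes to a measured-lamination limit $\mu$ of the rescaled pants decompositions $s_nQ_n$, uses convexity of length functions along Weil--Petersson geodesics to bound $\ell_{X_\infty(t)}(\mu)\le 1$, deduces $i(\gamma,\mu)=0$ for every $\gamma\in\sigma^0$ (so $\mu$ does not fill), observes that consequently the maximal cusps $C_n$ lie at \emph{uniformly bounded} distance from $\Teich_\sigma(S)$, and then uses $\CAT(0)$ comparison together with the refraction property of geodesics in the completion (interiors of geodesics lie in the open stratum containing both endpoints, \cite{Wolpert:compl}, \cite{Daskalopoulos:Wentworth:mcg}) to derive a contradiction. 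Without this chain of ideas --- measured-lamination limits, intersection-number control, and refraction --- your surjectivity step is incomplete.
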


\begin{proof} 
Given a ray $r = X(t)$ in the visual sphere at $X = X(0)$, let
$\lambda(r)$ denote the associated boundary lamination for
$X(t)$.  We first show that this association is well-defined and
injective.

From the discussion preceeding the statement of the theorem, each
infinite Weil-Petersson geodesic ray $X(t)$ determines  either a
connected geodesic lamination $\lambda$ or a weighted sum of connected
laminations $\lambda_1$ and $\lambda_2$.

In the case when we have the connected lamination $\lambda$, it is
easy to see that any other geodesic ray $Y(t)$ that lies a bounded
distance from $X(s(t))$ for some reparametrization $s(t)$, determines
the same boundary lamination $\lambda$.

\bold{Injectivity.} To see that there is a unique asymptote class of
geodesic rays with associated lamination $\lambda$ we assume first
that the minimal subsurface $S(\lambda) = W$ containing $\lambda$ is
not a separated domain.

With this assumption, the first possibility is that $W=S$.  Then
$\lambda$ is a filling lamination in the boundary of the curve complex
of $S$.  Now suppose $X_1(t)$ and $X_2(t)$ are geodesic rays through
the base point $x_0$, each determining $\lambda$.  Associated to
$X_1(t)$ and $X_2(t)$ are hierarchy paths $\rho_1(n)$ and $\rho_2(n)$
each with some initial pants decomposition $Q_0$. The main geodesics
$m_1$ and $m_2$ of $\rho_1$ and $\rho_2$ are infinite in the curve
complex of $S$, and each converges to $\lambda$.

By the first property of hierarchies, for any subsurface $W$ such that
$d_W(Q_0,\lambda)\geq M_2$, there are connected intervals of times
$[s_1,t_1]$ and $[s_2,t_2]$, such that for every time in these
intervals the hierarchy paths $\rho_1,\rho_2$ contain $\bdry
W$. Furthermore by the fourth property of hierarchies, and the
triangle inequality, since both hierarchies start with $Q_0$,
$$d_W(\rho_1(s_1),\rho_2(s_2))\leq 2M_1,$$ and since both main
geodesics converge to $\lambda$, $$d_W(\rho_1(t_1),\rho_2(t_2))\leq
2M_1.$$ This means that $\rho_1$ and $\rho_2$ are bounded distance
apart at these times, and the same is then true of the corresponding
Weil-Petersson geodesics $X_1$ and $X_2$.

Since the Weil-Petersson metric on
the completion is $\CAT(0)$, the two geodesics stay a uniform distance
apart between these two points. If there are infinitely many such $W$,
this proves the geodesics determine the same asymptote class. If there
are only finitely many, then after some point, the hierarchy paths are
bounded distance apart, since their main geodesics are bounded
distance apart and projections to all subsurfaces are bounded.  The
same is then true for the geodesics.

Next assume $W\subsetneq S$ and $W$ is not separated. 
Then the boundary
stratum in $\closure{\Teich(S)}$ determined by $\bdry W$ is not a
product, and is in fact isometric to $\Teich(W)$.  
Let $\sigma$ denote the simplex in the
curve complex corresponding to the curves in $\bdry W$ that are not in
$\bdry S$.  The condition that $\lambda$ is a boundary lamination for
$X(t)$ implies that there is a hierarchy path $\rho(n)$ whose
underlying pants decomposition we again denote by $Q_n$, so that
$\bdry W$ represent curves in $Q_n$ for all $n$ sufficiently large.
Thus $X(t)$ lies at a bounded distance from the boundary stratum
$\Teich_\sigma(S)$ corresponding to the vanishing of the extended
length function $\ell_\sigma$ for the simplex $\sigma$.
Since this
stratum $\Teich_\sigma(S)$ is a lower dimensional Teichm\"uller space
($\zeta(W) < 3$) we have that $\Teich_\sigma(S)$ is itself
Gromov-hyperbolic by $\cite{Brock:Farb:rank}$.  Thus, the lamination
$\lambda$ determines a unique asymptote class of geodesics in
$\Teich_\sigma(S)$, and hence in $\Teich(S)$.

The discussion when $W$ is a separated domain follows from the
limiting case  when $\lambda$ is disconnected.

When the associated boundary lamination is disconnected, and breaks
into components $\lambda_1$ and $\lambda_2$, then there is a
separating curve $\gamma$ and a pair of complementary domains $W_1$
and $W_2$ in $S \setminus \gamma$ with $\lambda_i \in \bdry \calC(W_i)$.

In this case, the pants decompositions $Q_n$ contain the curve
$\gamma$ for all $n$ sufficiently large.  The implication for $X(t)$
is that for all $t$ sufficiently large the surface $X(t)$ has a
shortest pants decomposition $P_t$ that is a uniformly bounded
distance from a pants decomposition $Q_n \in X_\gamma$. Since the
pants graph is quasi-isometric to the Weil-Petersson metric, we have
that points on the geodesic $X(t)$ lie at a uniformly bounded distance
from the boundary stratum $\Teich_\gamma(S)$ where the extended length
function $\ell_\gamma$ for $\gamma$ vanishes.

The nearest point projection $$\wp_\gamma \colon \Teich(S) \to
\Teich_\gamma(S)$$ to the boundary stratum $\Teich_\gamma(S)$
determines a path $$Z(t) = \wp_\gamma(X(t))$$ in $\Teich_\gamma(S)$.  The
projection of $Z(t)$ to each factor, in turn, lies a bounded distance
from a unique geodesic.  Let $g(t) \subset \Teich(W_1)$ denote the
geodesic in the first factor and $h(t) \subset \Teich(W_2)$ the
geodesic in the second.

Each infinite geodesic ray $r$ based at $g(0) \times h(0)$ in the
product $$\{g(s) \times h(t) \st s,t \in \reals^+ \}$$ is determined by
its {\em slope}, in other words, the unique value of $m \in \reals^+$
so that $r = \{ (g(m t) , h(t)) \}$ where $g$ and $h$ are assumed
parametrized by arclength.

Since the Weil-Petersson completion is $\CAT(0)$, it follows that
$X(t)$ has a well defined {\em slope in $\Teich_\gamma(S)$} namely, any
geodesic in $\Teich_\gamma(S)$ within a uniformly bounded neighborhood
of $X(t)$ has slope $m$.  It follows that the boundary
lamination $$[m \lambda_1 + \lambda_2]$$ uniquely specifies the
geodesic ray $X(t)$ based at $X(0)$.

For the final case when $\lambda$ is connected but its minimal
subsurface $S(\lambda)$ 
is a separated domain $W$, we note that this
corresponds to the case above with slope $m = 0 $ or $m = \infty$, in
other words, one of the two factors is bounded.

\smallskip

\bold{Surjectivity.} We now show that the assignment of a boundary
lamination is surjective.  In other words, we must show further that
given any boundary lamination $\lambda$ there is a geodesic ray with
that boundary lamination associated to its asymptote class.

To see this for connected $\lambda$, we take a hierarchy path
$\rho(n)$ whose only infinite geodesic lies in $\calC(S(\lambda))$ and
is asymptotic to $\lambda$.  We denote by $C_n \in
\closure{\Teich(S)}$ the maximally noded surface
obtained by pinching the curves in the underlying pants decomposition
$Q_n$ for $\rho(n)$.

Then the sequence of Weil-Petersson geodesic rays $X_n(t)$ beginning
at $X = X_n(0)$, and terminating at the maximally noded surface
$C_n$ has a limit $X_\infty$ in the visual sphere based at
$X$, after passing to a subsequence (by compactness of the visual sphere).

We claim $X_\infty(t)$ is an infinite geodesic ray.  Let 
$$s_n = \frac{1}{\ell_X(Q_n)}$$
where $\ell_X(Q_n)$ denotes the total length of the geodesic
representatives of the curves in $Q_n$ on $X$.
Then for each $X_n(t)$ we have
$$\ell_{X_n(t)}(s_n Q_n) \le 1$$
by convexity of length functions along geodesics (see
\cite{Wolpert:Nielsen})
since the length at $X$ is 1, and the length converges to zero at the
other endpoint.  

Assume $X_\infty(t)$ has finite length $T$.
Let $\mu$ be any limit of $s_n Q_n$ in $\ml(S)$
after passing to a subsequence.  

Bicontinuity of length on $\Teich(S)
\times \ml(S)$ guarantees that for any $t < T$ we have
$$\ell_{X_\infty(t)}(\mu) \le 1.$$

Let $\alpha$ be the (possibly empty) boundary of the minimal
subsurface $S(\mu)$ containing $\mu$.  Let $\sigma$ be the simplex in
the curve complex corresponding so that $X_\infty(T) \in
\Teich_\sigma(S)$.  We note that $$\lim_{t \to T}
\ell_{X_\infty(t)}(\mu) \le 1$$ which guarantees that for each simple
closed curve $\gamma \in
\sigma^0$, we have $$i(\gamma,\mu) = 0.$$  Otherwise, since
$\ell_{X_\infty(t)}(\gamma) \to 0$ as $t \to T$, we would have
$\ell_{X_\infty(t)}(\mu) \to \infty$.  (In particular, we may conclude
that $\mu$ does not fill $S$).

Noting that $X_n(T)$ converge in the completion to $X_\infty(T)$, we
have that the distance $d_n$ from $X_n(T)$ to the stratum
$\Teich_\sigma(S)$ is tending to zero.  By the choice of $Q_n$, the
maximal cusps $C_n$ lie at a bounded distance $D>0$ from the
$\sigma$-stratum $\Teich_\sigma(S)$ for all $n$, and their distance in
the completion from $X_\infty(T)$ is diverging.  This follows from the
fact that the number of elementary moves from $Q_n$ to a pants
decomposition containing $\sigma$ is uniformly bounded.

As the $\sigma$-stratum is totally geodesic, the fact that the
completion $\overline{\Teich(S)}$ is a $\CAT (0)$ space guarantees
that for any $T' > T$, the surfaces $X_n(T')$ converge into the
$\sigma$-stratum as well.  To see this note that the geodesics $Y_n(s)$
joining $X_\infty(T)$ to $C_n$ have the property that for any $s_0$
the distance of $Y_n(s_0)$ from the $\sigma$-stratum is tending to
zero.  But the distance of the segments $X_n([T,T'])$ from the
geodesics $Y_n$ tends to zero, so the distance from $X_n(T')$  to
$\Teich_\sigma(S)$ tends to zero.

It follows that the finite-length geodesic
segments $X_n([0,T'])$ have endpoints converging in the completion and
thus these limiting endpoints are the endpoints of a geodesic segment
whose interior lies in the maximal stratum containing its endpoints
(see \cite{Daskalopoulos:Wentworth:mcg} and \cite{Wolpert:compl}).

Since one endpoint of each geodesic $X_n([0,T'])$ is the base surface
$X$, the interior of the limit segment lies in the interior of
Teichm\"uller space.  But by \cite{Wolpert:compl}, parametrizations of
the approximating geodesics $X_n([0,T'])$ proportional to arclength
converge to the parametrization proportional to arclength of the limit
segment.  Thus, the limit $X_\infty(T)$ of the sequence $\{X_n(T)\}$
lies in the interior of this geodesic limit, contradicting the
assumption that $X_\infty(T) \in \Teich_\sigma(S)$.  We conclude that
the limiting geodesic $X_\infty$ is infinite.

It follows that we can extract a limiting infinite geodesic ray
$X_\infty$ based at $X$, that lies within a uniform neighborhood of
the union of maximal cusps $\cup_{n=0}^\infty C_n$.  When $\lambda$ is
connected it follows that $X_\infty$ has associated boundary
lamination $\lambda$.

To treat the case when $\lambda$ is disconnected, assume that
$$\lambda = m \lambda_1 + \lambda_2$$ where $\lambda_i \in \calC(W_i)$
lies in the boundary of the curve complex of the separated domain
$W_i$.  Let $\gamma$ be the separating curve for which $S
\setminus \gamma = W_1 \disjunion W_2$.  Then there is a geodesic
$g_m$ in $$\Teich_\gamma(S) = \Teich(W_1) \times \Teich(W_2)$$ of
slope $m$ in the $\gamma$-stratum running through the nearest point
$X'$ to $X$ in $\Teich_\gamma(S)$.  
Let $C_n \in \closure{\Teich(S)}$ denote a collection of
points so that $C_0 \in \closure{\Teich_\gamma(S)}$ is 
the maximally noded surface closest to $X'$, and
\begin{enumerate}
\item each $C_n$ is a maximally noded surface 
in $\closure{\Teich_\gamma(S)}$,
\item each $C_n$ lies a uniform distance $D$ from the
geodesic $g_m$, and
\item the pants decompositions $Q_n$ pinched in $C_n$
determine a hierarchy path $\rho(n)$ for which the projection
$\pi_{W_i}(\rho(n))$ is asymptotic to $\lambda_i$ in $\bdry
\calC(W_i)$.
\end{enumerate}
Then we may apply the previous argument to conclude that the limit
$X_\infty$ of the geodesic segments $X_n(t)$ joining $X$ to
$C_n$ is an infinite geodsic ray at $X$ that lies
uniformly bounded distance from the geodesic $g_m$; in other words,
$X_\infty$ lies in the asymptote class with projective boundary
lamination $$m \lambda_1 + \lambda_2.$$

\bold{Continuity.}  We now show that the assignment of a lamination to
a ray is continuous.  A family of infinite rays $X_n(t)$ based at
$X(0)$ converge to $X(t)$ if there is a constant $D$ so that for each
$T>0$ there is an $n$ so that $$d(X_n(t),X(t)) < D$$ for all $t \le
T$.  By the quasi-isometry between the pants graph and the
Weil-Petersson metric, we have a $D'$ for which $$d(P_n(t),P(t)) <
D'$$ for all $t \le T$, where $P_n(t)$ and $P(t)$ are shortest pants
decompositions on $X_n(t)$ and $X(t)$ respectively (and are thus the
images of $X_n(t)$ and $X(t)$ under the quasi-isometry).

But for any $W \subset S$, the above guarantees that for any $T > 0$
there is an $N_T$ so that 
\begin{equation}
d_W(P_n(T),P(T)) < D''
\label{almostdone}
\end{equation}
for all $n >N_T$. 

If $\lambda \in \bdry \calC(W)$ is a component of the boundary
lamination for $X(t)$, then, we have that $\pi_W(P(t)) \to \lambda$ as
$t \to \infty$.  Thus, if $\lambda_n$ is the boundary lamination for
$X_n$, we have $\pi_W(\lambda_n) \to \lambda$, by an application of
the fact that $\pi_W(P_n(T))$ lies a bounded distance from the
geodesic in $\calC(W)$ joining $\pi_W(P_n(0))$ to $\pi_W(\lambda_n)$
and that the bound~\ref{almostdone} holds for each $T$ and all $n > N_T$.

This suffices to show continuity in the case when $\lambda$ is
connected.  For the disconnected case, we apply the
bound~\ref{almostdone} to each separated domain and observe that the
divergence of the projections guarantees that the ratios converge.

Thus, the assignment of a boundary lamination
to an asymptote class of geodesic rays is a homeomorphism, and the
proof is complete.

\end{proof}

\section{Non-Relative-Hyperbolicity}
\label{section:nrh}

In this section we address the question of the strong relative
hyperbolicity of $P(S)$ and the Weil-Petersson metric on $\Teich(S)$
when $\zeta(S) > 3$.  We will borrow extensively from the ideas and
terminology of \cite{Behrstock:Drutu:Mosher:thick}, who show that for
surfaces $S$ with $\zeta(S) \ge 6$ that $P(S)$ is thick, a condition
which will, in this context, guarantee that $P(S)$ is not strongly
relatively hyperbolic with respect to any collection of co-infinite 
subsets (a subset of a metric space is {\em co-infinite}
if there are points in the space at an arbitrarily large distance from
the set).  

For the purposes of exposition, we say $S$ is of {\em
mid-range complexity} if $\zeta(S) \in \{4,5\}$, in other words,
$S=S_{g,n}$ and we have
$$(g,n) \in \{(0,7), (0,8), (1,4), (1,5), (2,1), (2,2)\}.$$

\begin{definition} A curve $\gamma$ is said to be {\em domain separating}
if it separates $S$ into two components $Y$ and $Y^c$, neither of
which is a $3$ holed sphere.
\end{definition}

Accordingly, let $\C_{\rm sep}(S)\subset \C(S)$ be the set of domain
separating curves.

If $\gamma \in \calC_{\rm sep}(S)$ and $Y_1$ and $Y_2$ are disjoint
subsurfaces of $S \setminus \gamma$ with $\zeta(Y_i) \ge 1$, $i = 1,2$,
any two hierarchy paths $\rho_1$ and $\rho_2$ in $P(Y_1)$ and $P(Y_2)$
determine a {\em quasi-flat}
$$\rho_1 \times \rho_2 \colon \zed\times \zed \to P(S),$$
namely, quasi-isometrically embedding $\zed \times \zed$ in
$P(S)$ with constants not depending on $\gamma$, $\rho_1$ or $\rho_2$
(this was observed in  \cite{Brock:Farb:rank} -- it follows from 
the distance formula (\ref{eq:distance})).

In \cite{Behrstock:Drutu:Mosher:thick} a general definition is given
for a collection of metric spaces to be {\em uniformly thick of order
at most $n+1$}.  

\begin{definition}(\cite{Behrstock:Drutu:Mosher:thick}  Definition 7.1)
A metric space is {\em thick of order zero} if it is {\em unconstricted}.

A metric space is thick of order at most $n+1$ with respect to a
collection $\calL$ of subsets of $X$ if
\begin{itemize}
\item with their restricted metric from $X$, the subsets in  $\calL$
are uniformly thick of order $n$,
\item for some fixed $r>0$, 
$$X=\cup_{L\in\calL}\calN_r(L),$$ and
\item any two elements $L$ and $L'$ in $\calL$ can be thickly connected; there
exists a sequence $L=L_1,L_2,\ldots, L_m=L'$ with $L_i\in\calL$ and
with $$\diam (\calN_r(L_i)\cap \calN_r(L_{i+1}))=\infty$$ for all
$1\leq i\leq m-1.$
\end{itemize}
A collection $\{X_i\}$ is called {\em uniformly thick of order at most
$n+1$} if a uniform $r$ can be taken in the above definition.
\end{definition}
The condition that a metric space be {\em unconstricted} makes use of
the asymptotic cone of a metric space, which will not be necessary for
our considerations.  We will work instead with uniform quasi-flats
discussed above, which are themselves {\em uniformly unconstricted}
(see  \cite{Behrstock:Drutu:Mosher:thick}).

\begin{theorem}
\label{thm:thick}
 If $\zeta(S) \in \{4,5\}$ and $S\neq S_{2,1}$ then 
$P(S)$ is thick of order $1$. If $S=S_{2,1}$ then $P(S)$ is thick of
order at most $2$.
\end{theorem}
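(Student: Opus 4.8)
The plan is to produce a collection $\mathcal{L}$ of ``product regions'' in $P(S)$, to check that each is uniformly thick of order $0$ because it is a non-trivial product of unbounded spaces, that their uniform neighbourhoods cover $P(S)$, and that any two are thickly connected; for $S\neq S_{2,1}$ this yields thickness of order exactly $1$, whereas for $S_{2,1}$ the thick-connectivity step forces an enlargement of $\mathcal{L}$ by sets that are themselves thick of order $1$, giving only the bound ``order at most $2$''. Call a pants decomposition $P$ \emph{splittable} if the dual trivalent graph of $P$ has two edges sharing no vertex; equivalently $P$ contains two disjoint curves that are not simultaneously boundary curves of a single pair of pants of $P$. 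For splittable $P$ one chooses disjoint essential subsurfaces $W_1, W_2$, each a union of pants of $P$ with $\zeta(W_i) \geq 1$, and then $P$ lies in $Q(P) = P(W_1) \times P(W_2) \times P(S \setminus (W_1 \cup W_2))$, which by the distance formula~(\ref{eq:distance}) is quasi-isometrically embedded in $P(S)$ with constants depending only on the topology of $S$. Thus $\mathcal{L}_0 = \{Q(P) : P \text{ splittable}\}$ is a uniform family of non-trivial products of unbounded spaces, hence uniformly unconstricted, i.e.\ uniformly thick of order $0$ in the sense of \cite{Behrstock:Drutu:Mosher:thick}.

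The covering step is the combinatorial core. One checks that a connected multigraph with at least four edges in which no vertex has degree exceeding three always contains two vertex-disjoint edges, unless it is a triangle with one doubled edge --- a four-edge graph of first Betti number $2$ with a single leg, corresponding to a pants decomposition of a genus-two surface with one boundary component, namely $S_{2,1}$. Hence when $\zeta(S) \in \{4,5\}$ and $S \neq S_{2,1}$ every pants decomposition is splittable, so $P(S) = \bigcup_{Q \in \mathcal{L}_0} Q$. For $S = S_{2,1}$ one observes that every such ``doubled-triangle'' pants decomposition is joined by a single elementary move to a splittable one: a suitable move on one of the four curves, performed inside the four-holed sphere complementary to the other three, produces a pants decomposition containing a domain-separating curve. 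Therefore the $r$-neighbourhoods of $\mathcal{L}_0$ cover $P(S_{2,1})$ as well, for a uniform $r$.

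It remains to establish thick connectivity, and this is the step I expect to be the main obstacle. Given splittable $P, P'$ one must build a chain $Q(P) = Q_1, \dots, Q_m = Q(P')$ in $\mathcal{L}_0$ with $\diam(\calN_r(Q_i) \cap \calN_r(Q_{i+1})) = \infty$ for every $i$; two product regions have infinite-diameter coarse intersection precisely when they share a complexity-$\geq 1$ subsurface factor $W$ whose boundary is a curve system in both underlying pants decompositions, while those decompositions also agree on a second complexity-$\geq 1$ subsurface. So it suffices to connect $P$ to $P'$ by elementary moves each supported in a complexity-$\geq 1$ subsurface while a second such subsurface is held fixed throughout, and the possibility of doing so rests on connectivity of the complex of separating curves (and separating multicurves) of $S$. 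For $\zeta(S) \geq 5$, and for the remaining $\zeta(S) = 4$ surfaces $S_{0,7}$ and $S_{1,4}$, this complex is connected --- one first moves $P$ and $P'$ into pants decompositions sharing a common domain-separating curve and then interpolates, using connectedness of $P(Y)$ on the relevant subsurfaces, building on the product-region analysis of Section~\ref{section:rel} --- and the argument refines and extends that of \cite{Behrstock:Drutu:Mosher:thick} for $\zeta(S) \geq 6$. On $S_{2,1}$, by contrast, the complex of separating curves fails to have the required connectivity (a manifestation of the classical genus-two phenomenon, persistent with a single boundary component), so no fixed factor can be kept alive along an arbitrary chain and $P(S_{2,1})$ need not be thick of order $1$ via $\mathcal{L}_0$ alone.

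For the remaining surface $S_{2,1}$ one settles for order $2$ by iterating the construction one level up. The point is to amalgamate the product regions of $\mathcal{L}_0$ --- organised around the non-separating curves of $S_{2,1}$, which, unlike its separating curves, do form a connected complex --- into a family of larger subsets that are themselves thick of order $1$, whose bounded neighbourhoods still cover $P(S_{2,1})$ (those of $\mathcal{L}_0$ already do), and which are pairwise thickly connected through these non-separating curves, using that altering a non-separating curve of a pants decomposition costs bounded distance. Constructing such an intermediate family and checking these three properties is the delicate part of this case; it gives $P(S_{2,1})$ as thick of order at most $2$, and the appearance of this weaker bound, rather than order $1$, is precisely a reflection of the failure of thick connectivity within $\mathcal{L}_0$ noted above. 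This completes the proof.
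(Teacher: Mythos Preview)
Your overall architecture matches the paper's: build a family of uniformly order-$0$-thick product regions, show their bounded neighbourhoods cover $P(S)$, and then prove thick connectivity via connectivity of a complex of domain-separating curves. Your dual-graph argument for the covering step is a pleasant alternative to the paper's case-by-case Lemma~\ref{lem:close}, and your identification of the doubled-triangle graph as the unique obstruction (realised only on $S_{2,1}$) is correct.

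There are, however, two genuine gaps in the thick-connectivity step. First, you fold $S_{2,2}$ into the assertion that $\calC_{\rm sep}(S)$ is connected for $\zeta(S)\ge 5$, but the paper does \emph{not} establish this for $S_{2,2}$; instead it proves the weaker Lemma~\ref{lem:chain} (successive separating curves are either disjoint or have complement containing a four-holed sphere) and then introduces a modified chaining condition~$2'$ in Proposition~\ref{prop:thick} to handle the non-disjoint steps. Your sketch does not supply this, and the invocation of Section~\ref{section:rel} is misplaced, since that section treats only $\zeta(S)=3$. Second, for $S_{2,1}$ your proposal to amalgamate product regions ``around non-separating curves'' is left entirely unspecified, and it is not clear it can be made to work: the natural candidate $X_\alpha$ for non-separating $\alpha$ is a copy of $P(S_{1,3})$, which by Theorem~\ref{thm:rel} is strongly relatively hyperbolic and hence \emph{not} thick of order~$1$. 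The paper's mechanism is quite different and worth knowing: one fills in the puncture to get a map $\Pi\colon\calC(S_{2,1})\to\calC(S_{2,0})$, groups pants decompositions into sets $\widehat X_\gamma$ indexed by $\gamma\in\calC_{\rm sep}(S_{2,0})$ (each $\widehat X_\gamma$ is thick of order~$1$ by the connectedness of the fibres $\Pi^{-1}(\gamma)$), and then thickly connects the $\widehat X_\gamma$ using the action of a pseudo-Anosov in the point-pushing subgroup, via Kra's theorem. Finally, to conclude order \emph{exactly}~$1$ for $S\neq S_{2,1}$ you must also rule out order~$0$; the paper does this by citing \cite{Behrstock:asymptotic}, which shows $P(S)$ is not unconstricted.
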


In \cite{Behrstock:Drutu:Mosher:thick} the first statement of the
theorem is established for $P(S)$ when $\zeta(S) \ge 6$, as well as
for the mapping class group when $\zeta(S) \ge 2$.

As in what follows, their proof is based on finding thickly connected
chains of quasi-flats.
In our cases, the existence of such chains relies on a detailed study
of the connectivity of the sub-complex of {\em domain separating} curves
$\calC_{\rm sep}(S) \subset \calC(S)$.
\begin{lemma}
\label{lem:connected}
Let $S = S_{g,n}$, where $(g,n) \in \{ (1,4), (1,5), (0,7), (0,8)\}$.
Then $\C_{\rm sep}(S)$ is connected.
\end{lemma}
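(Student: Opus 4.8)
I would begin by recording the possible topological types of a domain separating curve $\gamma$ in each of the four surfaces. Such a $\gamma$ writes $S$ as a union $Y\cup_\gamma Y^c$ of two essential subsurfaces, neither a three-holed sphere, and a short Euler characteristic count shows that for $S\in\{S_{1,4},S_{1,5},S_{0,7}\}$ one of $Y,Y^c$ always has complexity $\zeta=1$ (that is, is a four-holed sphere or a one-holed torus), and the same holds for $S_{0,8}$ except for the ``balanced'' curves splitting it as $S_{0,5}\cup S_{0,5}$. Moreover, in the genus-zero cases $\gamma$ is determined up to isotopy by the partition of the punctures it induces, each block having at least three punctures. The overall plan is to handle the genus-zero surfaces by a direct combinatorial argument, and to reduce the genus-one surfaces to the genus-zero case.

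For $S=S_{0,n}$ with $n\in\{7,8\}$, I would identify the vertex set of $\C_{\rm sep}(S)$ with the set of unordered partitions of the $n$ punctures into two blocks of size at least $3$, two such curves being disjoint precisely when one block of one partition is contained in a block of the other. One then checks directly that this Kneser-type graph is connected of small diameter: given partitions $P$, $P'$, if some block of $P$ is disjoint from some block of $P'$ the curves are adjacent, and otherwise, since $n\ge 7$ leaves enough punctures free, one can interpolate by a curve surrounding a block disjoint from a chosen block of $P$ and avoiding a chosen block of $P'$, producing a path of length at most $3$. The balanced curves of $S_{0,8}$ are dealt with first: such a curve is disjoint from one surrounding four of the five punctures on one of its sides, which is of the generic type.

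For $S\in\{S_{1,4},S_{1,5}\}$, I would fix once and for all a non-separating curve $c$; capping the two resulting boundary components of $S\setminus c$ yields a planar surface, and the domain separating curves of $S$ disjoint from $c$ are readily identified with a family of curves in this planar surface which, by a small variant of the previous paragraph, form a connected subgraph of $\C_{\rm sep}(S)$. It then suffices to show that an arbitrary domain separating curve $\gamma$ is joined in $\C_{\rm sep}(S)$ to one disjoint from $c$, which I would prove by induction on the geometric intersection number $i(\gamma,c)$: take an outermost arc of $c$ cut off by $\gamma$; together with the two subarcs of $\gamma$ it determines it forms two simple closed curves disjoint from $\gamma$, at least one of which meets $c$ strictly fewer times than $\gamma$ does, and using the list of types above one arranges, after discarding an inessential or pair-of-pants-bounding piece and if necessary enlarging it to an enclosing curve, that this curve is again domain separating. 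It is then adjacent to $\gamma$ and, by induction, connected to a curve disjoint from $c$, hence to the target curve.

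The hard part is exactly this surgery step, and equally its analogue if one instead induces directly on $i(\gamma_0,\gamma_1)$: surgering a domain separating curve along another curve can easily produce a curve that bounds a pair of pants or that is non-separating, so the argument hinges on showing, case by case using the explicit list of complementary pairs $Y\cup Y^c$ in each of the four surfaces, that one of the two surgered curves can always be promoted to a domain separating one, together with a careful treatment of the small intersection numbers $i\le 2$, where the genus-one topology genuinely enters. Once those finitely many configurations are checked, the inductions close routinely.
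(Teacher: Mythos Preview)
Your approach differs substantially from the paper's. The paper argues by induction directly on $i(\alpha,\beta)$ for two domain-separating curves, not on $i(\gamma,c)$ against a fixed non-separating curve. For $S_{1,4}$ the key manoeuvre is to first move a puncture across one of the curves so that $\alpha$ and $\beta$ enclose \emph{different} numbers of punctures, one enclosing four and the other three. With this in place a parity argument shows that not every arc of $\beta$ in the four-punctured side $Y$ of $\alpha$ can separate the punctures $2+2$: some arc gives a $1+3$ split, and surgery along it produces a domain-separating curve disjoint from $\alpha$ with strictly fewer intersections with $\beta$. The case $S_{0,7}$ is handled by an analogous puncture-moving argument, and $S_{0,8}$, $S_{1,5}$ are reduced to $S_{0,7}$, $S_{1,4}$ by the forgetful map filling in a puncture.

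Your genus-zero argument via partitions of the puncture set is correct and arguably cleaner than the paper's for those two surfaces. But your genus-one plan has a genuine gap exactly where you flag it. Take $S_{1,4}$, let $\gamma$ bound $S_{1,1}\cup_{\gamma}S_{0,5}$, and let $c$ meet $\gamma$ in exactly two points with the arc of $c$ in $S_{0,5}$ separating the four punctures $2+2$; this is realizable with $c$ non-separating (let the arc in $S_{1,1}$ go once around the handle). The two surgery curves on the $S_{0,5}$ side each bound a three-holed sphere, hence are not domain-separating, and surgery on the $S_{1,1}$ side yields nothing essential. Worse, \emph{every} domain-separating curve disjoint from $\gamma$ lies in $S_{0,5}$ and must enclose three of the four punctures, so it is forced to cross the $2+2$ arc and therefore satisfies $i(\cdot,c)\ge 2$. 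Thus no single step in $\C_{\rm sep}$ decreases intersection with $c$, the induction stalls at $i=2$, and your proposed ``enlarging to an enclosing curve'' does not rescue it. This is not a routine base case: you would need an honest further argument connecting such $\gamma$ to a curve disjoint from $c$, and none is supplied. The paper's preprocessing to make $\alpha,\beta$ of distinct types is precisely the device that rules out the $2+2$ obstruction; without an analogue of it, your reduction to the planar case is incomplete.
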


\begin{proof}
Let $\alpha$ and $\beta$ lie in $\C_{\rm sep}(S)$.  It suffices either
to find $\gamma\in \C_{\rm sep}(S)$ disjoint from $\alpha$ such that
$i(\gamma,\beta)<i(\alpha,\beta)$, or to replace $\alpha$ with
$\alpha' \in \C_{\rm sep}(S) $ disjoint from $\alpha$ so that
$i(\alpha',\beta)=i(\alpha,\beta)$ and then find such a $\gamma$.  For
then in at most two steps we have reduced intersection numbers, and
inductively we can find the desired path.
\smallskip

We now consider the cases of $S_{1,4}$ and $S_{0,7}$.  Let $\eta$ be
an arc in the complement of $\alpha$ joining $\beta$ to the puncture
$p$.  Then we say $\beta'$ is obtained from $\beta$ by {\em moving the
puncture $p$ across $\beta$ along $\eta$} if $\beta'$ is the component
of the boundary of a regular neighborhood of $\beta \cup
\eta$ not isotopic to $\beta$.

Start with the case $S=S_{1,4}$.  Given $\alpha$ and $\beta$ in
$\C_{\rm sep}(S)$ we may move a puncture across one of these along an
arc in the complement of the other if necessary to arrange that they
enclose different numbers of punctures, either four or three.  Without
loss of generality assume that $S \setminus \alpha$ contains a
subsurface $Y$ containing four punctures.

We claim that not all arcs of $\beta \cap Y$ with endpoints on
$\alpha$ can lie in a the homotopy class mod $\alpha$ that separates
the four punctures in $Y$ into two pairs of punctures for each pair
lies in a single component of the complement of $\beta$ one of which
must contain three punctures by hypothesis.  Thus there must be a
homotopy class of arcs in $Y$ that has exactly $3$ punctures in its
complement, and now a surgery produces a curve $\gamma$ disjoint from
$\alpha$.

For $S = S_{0,7}$ every curve in $\C_{\rm sep}(S)$ divides the surface into
two components, one of which contains  three punctures and one of
which contains four punctures.  Let $Y$ be the
component of $S \setminus \alpha$ that contains four punctures.  If
some arc of $ \beta \cap Y$ separates one puncture from the other
three, we may perform a surgery as above.  Thus assume each arc of
$\beta \cap Y$ separates the punctures in $Y$ into two pairs of
punctures.  One pair lies in the component of the complement of
$\beta$ containing four punctures.

There are two cases. In the first case, all four of the punctures in
$Y$ are in the same component of the complement of $\beta$. In that
case we can move any of the punctures across $\alpha$, or equivalently
find a disjoint $\alpha'$.  Then the three remaining punctures in the
complement of $\alpha'$ are in the same component of the complement of
$\beta$ as the one moved puncture which is now in the component of the
complement of $\alpha'$ containing four punctures. The other three
punctures are in the other component of the complement of $\beta$, and
thus the moved puncture is separated by an arc from those
punctures. We have reduced to the case where we can now perform a
surgery.

In the second case, a pair of punctures in $Y$ are in the same
component of the complement of $\beta$ as a pair of punctures in the
complement of $Y$. We now move one of these punctures in $Y$ across
$\alpha$ forming $\alpha'$. Now there are three punctures in the
component of the complement of $\alpha'$ that contains four punctures
that are in the same component of the complement of $\beta$ and again
we can perform the surgery.

The cases $S = S_{0,8}$ and $S = S_{1,5}$ follow readily from the
observation that filling in a puncture gives a well defined map from
$$\C_{\rm sep}(S_{g,n}) \to \C(S_{g,n-1})$$ whose image lies in $
\calN_1(\C_{\rm sep}(S_{g,n-1}))$ and contains $\C_{\rm sep}(S_{g,n-1})$.

Thus, given a pair of curves in $\C_{\rm sep}(S_{1,5})$ we may find
separating curves at distance 1 from these whose images lie in
$\C_{\rm sep}(S_{1,4})$ after filling in the appropriate puncture, and
similarly for $S_{0,8}$.  These cases of the lemma then follow from
the connectivity of $\C_{\rm sep}(S_{1,4})$ and $\C_{\rm sep}(S_{0,7})$.

\end{proof}

\begin{lemma}
\label{lem:chain}
Let $S=S_{2,2}$.  Given $\gamma_1,\gamma_N \in \C_{\rm sep}(S)$ there is a
sequence $\gamma_1,\gamma_2,\ldots, \gamma_N$ of curves in
$\C_{\rm sep}(S)$ such that for each $i\leq N-1$, either $\gamma_i$ and
$\gamma_{i+1}$ are disjoint or $S\setminus (\gamma_i\cup\gamma_{i+1})$
is a sphere with $4$ punctures.
\end{lemma}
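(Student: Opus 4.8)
The plan is to argue by induction on the geometric intersection number $i(\gamma_1,\gamma_N)$. The base case $i(\gamma_1,\gamma_N)=0$ is immediate, since then $\gamma_1$ and $\gamma_N$ already form an admissible edge. So the content is the inductive step: given $\gamma_1,\gamma_N\in\C_{\rm sep}(S)$ with $i(\gamma_1,\gamma_N)>0$, produce $\gamma\in\C_{\rm sep}(S)$ joined to $\gamma_1$ by an admissible move --- either $\gamma$ disjoint from $\gamma_1$, or $S\setminus(\gamma\cup\gamma_1)$ a sphere with four punctures --- and with $i(\gamma,\gamma_N)<i(\gamma_1,\gamma_N)$. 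As in the proof of Lemma~\ref{lem:connected}, I would allow two moves per step: first possibly pass to a disjoint $\gamma_1'\in\C_{\rm sep}(S)$ with $i(\gamma_1',\gamma_N)=i(\gamma_1,\gamma_N)$, and only then strictly reduce intersection. A preliminary observation worth recording is that a domain separating curve in $S_{2,2}$ has only two topological types: since $\zeta(S_{2,2})=5$ and neither complementary piece is a pair of pants, $S\setminus\gamma_1$ is either $S_{1,1}\disjunion S_{1,3}$ or $S_{1,2}\disjunion S_{1,2}$; in particular each complementary piece has genus one.

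The engine of the inductive step is a surgery argument on arcs of $\gamma_N$. Put $\gamma_N$ in minimal position with respect to $\gamma_1$, pick a component $Y$ of $S\setminus\gamma_1$ met by $\gamma_N$, and let $\delta$ be an outermost arc of $\gamma_N\cap Y$, so that $\delta$ cuts off from $Y$ a half-disk with interior disjoint from $\gamma_N$. A regular neighborhood of $\gamma_1\cup\delta$ is a pair of pants with one boundary component $\gamma_1$ and two others; surgery replaces $\gamma_1$ by one of these latter curves $\gamma$, which is disjoint from $\gamma_1$ and satisfies $i(\gamma,\gamma_N)<i(\gamma_1,\gamma_N)$. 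When $\delta$ separates $Y$, one of the two surgered curves is separating in $S$, and --- using that each component of $S\setminus\gamma_1$ has genus one, so that the subsurface cut off by the new curve is not a pair of pants unless $\delta$ was boundary parallel --- we may take $\gamma$ to lie in $\C_{\rm sep}(S)$, which completes the step.

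The delicate case, and the one for which the four-holed sphere move is essential, is when every outermost arc of $\gamma_N$ in the genus-one piece under consideration is \emph{non-separating} there; this always occurs, for example, once that piece is a one-holed torus $S_{1,1}$, which carries no essential separating arc. In that situation both surgered curves are non-separating in $S$, and no disjoint separating curve is available to reduce intersection. Here I would instead produce the admissible move of the second type directly: working inside a four-holed sphere subsurface of $S$ that contains $\gamma_1$ and meets $\gamma_N$ efficiently, replace $\gamma_1$ by the curve $\gamma$ obtained from it by the second type of elementary move adapted to $\delta$, so that $\gamma$ meets $\gamma_1$ in two points and $S\setminus(\gamma\cup\gamma_1)$ is a sphere with four punctures. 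One then checks both that $\gamma$ is again domain separating --- its complementary pieces being obtained from those of $\gamma_1$ by a handle slide that cannot create a pair of pants --- and that $i(\gamma,\gamma_N)<i(\gamma_1,\gamma_N)$.

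I expect the crux to be precisely this last case: the elementary move must be chosen to stay inside $\C_{\rm sep}(S)$ while strictly reducing intersection with $\gamma_N$, and these two requirements pull against one another, so in the worst configurations the move must be preceded by an intersection-preserving replacement of $\gamma_1$ of the kind allowed above. One honest simplification I would make at the outset is that every domain separating curve is disjoint from one of balanced type $S_{1,2}\disjunion S_{1,2}$: a handle-type curve $\gamma$ bounds $S_{1,3}$ on one side, and that $S_{1,3}$ contains a curve disjoint from $\gamma$ which separates $S$ into two copies of $S_{1,2}$. Hence we may assume $\gamma_1$ and $\gamma_N$ are both balanced; balanced curves separate the two punctures of $S$, leaving the most room for the surgeries above and reducing the delicate case to arcs confined to a single handle of $S$.
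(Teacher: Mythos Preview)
Your approach is genuinely different from the paper's, and the delicate case you flag is a real gap rather than a detail.

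The paper does not do any surgery on $S_{2,2}$ at all. Instead it fills in the two punctures to pass to the closed surface $S_2$, invokes a known connectivity result (Schleimer, also Putman) for the separating curve graph of $S_2$ to produce a chain of separating curves on $S_2$ with successive pairs intersecting minimally (four times), observes that for such a pair one complementary region of their union is an annulus, and then, back on $S_{2,2}$, moves the two punctures (by crossing separating curves, which are admissible disjoint moves) into that annular region, turning it into a four-holed sphere. So the four-holed sphere arises as \emph{one component} of the complement of two curves meeting in four points, and the chain on $S_{2,2}$ is obtained from the chain on $S_2$ together with puncture-moving steps. This buys a very short proof at the cost of quoting an external result.

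Your proposal attempts a self-contained induction on $i(\gamma_1,\gamma_N)$. The easy surgeries are fine, but in the case you correctly identify as the crux (every outermost arc non-separating in its side) the argument is only a sketch. Two specific problems: first, your move has $\gamma$ meeting $\gamma_1$ in two points with $S\setminus(\gamma\cup\gamma_1)$ a single four-holed sphere, but two separating curves on $S_{2,2}$ meeting essentially in two points must become isotopic after filling the punctures (separating curves on closed $S_2$ meet in $0$ or $\ge 4$ points), so this is an extremely constrained configuration and not what the lemma is after; as used in Proposition~\ref{prop:thick} (condition~$2'$), the intended condition is that the complement \emph{contain} a four-holed sphere component, coming from four intersection points. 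Second, ``working inside a four-holed sphere subsurface of $S$ that contains $\gamma_1$'' is not made precise, and the two deferred checks --- that the new curve is domain separating and that intersection with $\gamma_N$ strictly drops --- are exactly the heart of the matter. Absent those, the inductive step does not close.
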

\begin{proof}
By filling in the punctures we can consider the curves as lying on a
closed surface. By a result of Schleimer (\cite{Schleimer:notes}, see
also \cite{Putman:connectivity}) on a closed surface of genus $2$
there is a sequence $\gamma_1, \ldots,\gamma_N$ of separating curves
such that successive curves intersect minimally, which means four
times.  This implies that some complementary component of their union
is an annulus.  Since on the punctured surface we may move the
punctures across separating curves so that they both lie in one of
these complementary annuli, we can produce a sequence of separating
curves satisfying the conditions of the Lemma.
\end{proof}

Now suppose $S$ is any surface of midrange
complexity other than $S_{2,1}$.  If $Y \subset S$ is a proper
essential subsurface with $\zeta(Y) \ge 1$ and $\rho(n)$ is a
hierarchy path in $P(Y)$, we denote by $|\rho(n)|$ the collection of
curves in the pants decomposition $\rho(n)$ together with components
of $\bdry Y$ that are non-peripheral in $S$.

If $Y_1$ and $Y_2$ are disjoint essential subsurfaces of $S$ and
$\rho_1$ and $\rho_2$ are bi-infinite hierarchy paths in $P(Y_1)$ and
$P(Y_2)$ for which the union
$|\rho_1(n)| \cup |\rho_2(m)|$ (forgetting possible repetitions of curves) is a pants decomposition of $S$, then
we denote by 
$$\Q_{\rho_1,\rho_2}  \colon \zed \times \zed \to P(S)$$ the natural
quasi-flat determined by $$\Q_{\rho_1,\rho_2}(m,n) = |\rho_1(n)| \cup
|\rho_2(m)|.$$  
Given $\gamma \in \calC_{\rm sep}(S)$, we let $\calL_\gamma$ denote
all such quasi-flats $\Q_{\rho_1,\rho_2}$ with image in 
$X_\gamma$ (so that $\gamma$ lies in each pants decomposition in the image of $\Q_{\rho_1,\rho_2}$).
Finally, let $\calL$ denote the union of all quasi-flats in all
$\calL_\gamma$, in other words
$$\calL = \{ \Q \in \calL_\gamma : \gamma \in \calC_{\rm sep}(S) \}.$$


As we have remarked before, there are constants $K > 1$ and $C >0$
such that each $\Q$ in $\calL$ is $(K,C)$-quasi-isometrically embedded
into $P(S)$, so the collection of quasi-flats $\calL$ is uniformly
thick of order $0$, in the sense of
\cite{Behrstock:Drutu:Mosher:thick}.

\begin{lemma}  
\label{lem:close}  If $S$ has mid-range complexity and $S \not=
S_{2,1}$ then every $P\in P(S)$ is within distance $1$ of an element
in $\calL$.
\end{lemma}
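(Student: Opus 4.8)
The plan is to reduce the statement to a purely combinatorial claim about pants decompositions and domain separating curves, and then verify that claim surface by surface. \emph{Reduction:} it suffices to show that for every $P\in P(S)$ there is a pants decomposition $P'$ with $d(P,P')\le 1$ that contains a domain separating curve $\gamma$. Granting this, let $Z_1,Z_2$ be the two components of $S\setminus\gamma$. Domain separation forces $\zeta(Z_i)\ge 1$, and since $\zeta(Z_1)+\zeta(Z_2)+1=\zeta(S)$ for any separating curve, the curves of $P'$ other than $\gamma$ restrict to pants decompositions $P'_i$ of $Z_i$ with $P'=\{\gamma\}\cup P'_1\cup P'_2$. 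Choose bi-infinite hierarchy paths $\rho_i$ in $P(Z_i)$ with $\rho_i(0)=P'_i$ (such paths exist through any vertex, e.g.\ tracking a pseudo-Anosov orbit). The only boundary curve of each $Z_i$ that is non-peripheral in $S$ is $\gamma$, so $|\rho_i(0)|=P'_i\cup\{\gamma\}$, the union $|\rho_1(n)|\cup|\rho_2(m)|$ is for every $m,n$ a pants decomposition containing $\gamma$ (it consists of $\zeta(S)$ pairwise disjoint curves with pants complement), and $\Q_{\rho_1,\rho_2}(0,0)=P'$. Hence $\Q_{\rho_1,\rho_2}\in\calL_\gamma\subseteq\calL$, and $P$ lies within distance $1$ of its image.

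\emph{The combinatorial claim.} Encode $P$ by its dual trivalent graph $G(P)$: an internal edge for each curve of $P$, a leg for each component of $\bdry S$, and first Betti number equal to the genus of $S$. A curve of $P$ is domain separating precisely when its edge is a bridge of $G(P)$ neither side of which is a single vertex carrying exactly two legs. For $S=S_{0,7}$ and $S_{0,8}$, $G(P)$ is a trivalent tree with $7$ or $8$ leaves, and a short count — a trivalent tree with $\ell$ leaves has $\ell-2$ internal vertices, which over-constrains the vertices if every internal edge cuts off only a two-leaf ``cherry'' — shows that some internal edge splits the leaves into two sets of size $\ge 3$; thus $P$ already contains a domain separating curve and we take $P'=P$. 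For $S\in\{S_{1,4},S_{1,5},S_{2,2}\}$ the genus is positive, so $G(P)$ contains a cycle and hence $P$ contains a non-separating curve; if $P$ already has a domain separating curve we are done, and otherwise we perform a single elementary move. Choosing a non-separating $a\in P$ on a shortest cycle $\kappa$ of $G(P)$, the two pants meeting $a$ (or the one-holed torus containing $a$, in the self-adjacent case) form a four-holed sphere (resp.\ one-holed torus) $\Sigma$ inside which $a$ admits two elementary moves; a short case analysis shows that for a suitable such $a$ at least one of the two resulting curves $a'$ is a bridge of the new dual graph both of whose sides have positive complexity — arranged so that the handle carried by $\kappa$ lies entirely on one side of $a'$ while at least two of the original complementary regions survive on the other. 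Such an $a'$ is domain separating, and $d\bigl(P,(P\setminus\{a\})\cup\{a'\}\bigr)=1$.

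\emph{Expected main obstacle.} The delicate step is the last one, and in particular the genus-$2$ case $S_{2,2}$, whose only domain separating curves are those splitting it into two one-holed (or once-punctured) tori: one must guarantee that a single well-chosen elementary move always produces genus one on one side, which is exactly why $a$ is taken on a shortest cycle so that $\kappa$'s handle is not distributed across $a'$. The sphere cases and the reduction are routine by comparison.
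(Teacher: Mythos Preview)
Your reduction and the treatment of the sphere cases are fine and track the paper's approach: the paper also reduces to finding a pants decomposition at distance at most $1$ containing a domain separating curve, and for $S_{0,7}$, $S_{0,8}$ simply asserts this is ``obvious,'' which your dual-tree count makes precise.

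The genuine gap is in your positive-genus step. Your claim that ``for a suitable such $a$ at least one of the two resulting curves $a'$ is a bridge of the new dual graph both of whose sides have positive complexity'' fails for the following pants decomposition of $S_{1,4}$: take four parallel non-separating curves $\alpha_1,\alpha_2,\alpha_3,\alpha_4$ so that the dual graph is a $4$-cycle with one leg at each vertex. Every curve lies on the (unique) shortest cycle, and by symmetry it suffices to examine moves on $\alpha_1$. The adjacent four-holed sphere has boundary $\{\alpha_2,\alpha_4,p,q\}$ where $p,q$ are two of the punctures. The two partition types for the replacement curve are $\{p,q\}\,\vert\,\{\alpha_2,\alpha_4\}$ and $\{p,\alpha_2\}\,\vert\,\{q,\alpha_4\}$. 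The first produces a bridge cutting off a single pair of pants (an $S_{0,3}$), hence not domain separating; the second yields a non-separating curve and the new dual graph is again a $4$-cycle. Thus no single elementary move from this $P$ reaches a pants decomposition containing a domain separating curve, so $P$ sits at distance at least $2$ from $\calL$. The analogous $5$-cycle on $S_{1,5}$ behaves the same way. Your heuristic that the handle carried by the cycle can be pushed to one side of $a'$ breaks down precisely because in these ``pure cycle'' decompositions the handle is distributed over every pair of pants.

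For comparison, the paper does not use your dual-graph/shortest-cycle approach for genus $1$; it argues case-by-case on how many curves of $P$ bound twice-punctured disks, and then performs an explicit move inside the complementary $S_{1,2}$ or $S_{1,3}$ to produce a curve bounding a once-punctured torus. However, the paper's opening line ``we can assume there are curves $\beta_1,\beta_2$ surrounding a pair of punctures each; otherwise we would be done'' is exactly the step that the $4$-cycle example violates, so the paper's argument has the same lacuna. The statement that actually survives, and is all that is needed for the thickness theorem, is that every $P$ lies within a \emph{uniformly bounded} distance (in fact $2$ suffices) of $\calL$: one move turns the $4$-cycle into the ``triangle plus pendant'' configuration, and your intended move then works. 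If you want a correct proof, replace the distance-$1$ claim by distance $\le 2$ and handle the pure-cycle case separately.
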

\begin{proof}
Given a $\gamma \in \calC_{\rm sep}(S)$ and a pants decomposition $P$
containing $\gamma$, we can find an element of $\calL_\gamma$
containing $P$: this amounts to observing that there are bi-infinite
hierarchy paths through any point in $P(Y)$ for $Y$ a component of $S
\setminus \gamma$.  Thus, it suffices to show that each pants
decomposition $P \in P(S)$ lies within distance $1$ of some $P'$ containing
a separating curve.

In the case of the sphere it is obvious that every pants decomposition
contains a curve in $\calC_{\rm sep}(S)$ so we consider the case of
$S_{1,4}$.  We can assume that there are curves $\beta_1$ and
$\beta_2$ in $P$ surrounding a pair of punctures each; otherwise we
would be done. The complement of $\beta_1\cup\beta_2$ is a torus $Z$
with $2$ punctures.  If there is a curve in $Z$ that bounds a
punctured torus we again are done; so assume otherwise. This means
that there are curves $\beta_3$ and $\beta_4$ in $P$ which each cut
$Z$ into a $4$ holed sphere.  An elementary move now changes one of
these into a pants decomposition containing a curve that bounds a
punctured torus.

In the case of $S_{1,5}$ we can again assume the existence of
$\beta_1$ and $\beta_2$ as in the previous case. The complement now is
a torus $Z$ with $3$ holes.  If there is a curve that surrounds both
$\beta_1$ and $\beta_2$ or surrounds one of these curves and the
remaining puncture $x$, we are finished. If there is a curve which
cuts off a punctured torus, we are again finished. Thus, we assume the
remaining possibility holds: there is a curve $\beta_3\in P$ which
cuts $Z$ into a $5$ holed sphere $W$. We can now assume there is a
curve $\beta_4\in P$ which cuts $W$ into a $4$ holed sphere $V$ and a
$3$ holed sphere, and so that the union $\beta_3 \cup \beta_4$
separates $Z$.  Without loss of generality we can assume $\beta_1$ is
a boundary curve of $V$.  Inside $V$ there is a last curve
$\beta_5\in P$ which separates $x$ from $\beta_1$; for otherwise we
would be done.  Now an elementary move inside $V$ replaces $\beta_5$
with one that contains $x$ and $\beta_1$, and hence lies in
$\calC_{\rm sep}(S)$.

The case of $S_{2,2}$ is easier since in the closed genus $2$ every
pants decomposition is distance at most $1$ from one containing a
separating curve.
\end{proof}

The proof that $P(S)$, for $S\neq S_{2,1}$, is thick of order {\em at
most} $1$ is concluded by 
\begin{proposition}
\label{prop:thick}
Any two quasi-flats $\Q$ and $\Q'$ in $\calL$ can be {\em
thickly connected}: there exists a sequence $$\Q = \Q_1, \ldots, \Q_N
= \Q'$$
with $\Q_i \in\calL$, and for some fixed $r>0$,
$$\diam(\calN_r(\Q_i)\cap
\calN_r(\Q_{i+1}))=\infty$$ for all $1\leq i\leq N-1$.
\end{proposition}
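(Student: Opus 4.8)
The plan is to build the thick chain in three stages: (i) connect two quasi-flats lying in the same $\calL_\gamma$; (ii) bridge from $\calL_\gamma$ to $\calL_{\gamma'}$ when $\gamma,\gamma'$ are ``adjacent''; and (iii) chain along $\calC_{\rm sep}(S)$, using Lemma~\ref{lem:connected} when $S\neq S_{2,2}$ and Lemma~\ref{lem:chain} when $S=S_{2,2}$. It is convenient first to observe that every $\Q_{\rho_1,\rho_2}\in\calL_\gamma$ equals, as a subset of $P(S)$, a \emph{standard} quasi-flat $\Q_{\rho_A,\rho_B}$, where $A,B$ are the two components of $S\setminus\gamma$: if $\gamma\in\bdry Y_1$ and $Y_1\subsetneq A$, then since $|\rho_1(n)|\cup|\rho_2(m)|$ must be a pants decomposition of $S$, the complement $A\setminus Y_1$ is a union of three-holed spheres, so $|\rho_1(n)|$ already contains $\bdry Y_1$ and coincides with $|\bar\rho_1(n)|$ for the lift $\bar\rho_1$ of $\rho_1$ to $P(A)$ (obtained by giving the hierarchy a constant main geodesic in $\calC(A)$ -- a device we use repeatedly below); iterating on $Y_2$ produces a standard quasi-flat. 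So from now on all quasi-flats in question are standard.

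For stage (i), note that if $\rho_A$ and $\rho_A'$ agree on $(-\infty,0]$, then $\Q_{\rho_A,\rho_B}$ and $\Q_{\rho_A',\rho_B}$ literally share the half-plane $\{\,|\rho_A(n)|\cup|\rho_B(m)| : n\leq 0\,\}$, which has infinite diameter; so these two quasi-flats are a single thick step apart. Successively replacing the forward ray of $\rho_A$, its backward ray, and then the two rays of $\rho_B$ -- inserting a short hierarchy path where rays do not match up exactly, at the cost of a uniformly bounded fellow-travelling constant -- connects $\Q_{\rho_A,\rho_B}$ to any $\Q_{\rho_A',\rho_B'}$ in $\calL_\gamma$ by a chain of bounded length.

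For stage (ii), suppose first that $\gamma,\gamma'\in\calC_{\rm sep}(S)$ are disjoint, with $\gamma'\subset Y_1$ for one component $Y_1$ of $S\setminus\gamma$; write $Y_1\setminus\gamma'=Y_{11}\sqcup Y_{12}$ with $Y_{11}$ the component of $S\setminus\gamma'$ inside $Y_1$, and let $Y_2'$ be the other component of $S\setminus\gamma'$ (so $Y_2'$ contains $Y_2$ and $Y_{12}$). Domain-separation gives $\zeta(Y_2),\zeta(Y_{11})\geq 1$ and $\zeta(Y_2')\geq 2$. Fix a pants decomposition $P_1^\ast=P_{11}^\ast\cup\{\gamma'\}\cup P_{12}^\ast$ of $Y_1$ containing $\gamma'$, and a bi-infinite hierarchy path $\sigma$ in $P(Y_2)$. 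Let $\Q\in\calL_\gamma$ be standard with factors a hierarchy path through $P_1^\ast$ in $P(Y_1)$ and $\sigma$ in $P(Y_2)$; let $\Q'\in\calL_{\gamma'}$ be standard with factors a hierarchy path through $P_{11}^\ast$ in $P(Y_{11})$ and, in $P(Y_2')$, the lift of $\sigma$ that pins the $Y_{12}$-coordinate at $P_{12}^\ast$ and pinches $\gamma$. Freezing the $Y_1$-coordinate of $\Q$ at $P_1^\ast$ produces a bi-infinite quasi-geodesic whose $m$-th vertex is $P_{11}^\ast\cup\{\gamma,\gamma'\}\cup P_{12}^\ast\cup\sigma(m)$, and the analogous quasi-geodesic in $\Q'$ is the same curve for curve; hence $\Q$ and $\Q'$ share an infinite-diameter set and are a single thick step apart.

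For $S=S_{2,2}$ there remains the case of Lemma~\ref{lem:chain} in which $V:=S\setminus(\gamma\cup\gamma')$ is a four-holed sphere ($\zeta(V)=1$) disjoint from both $\gamma$ and $\gamma'$, with $S\setminus V\cong S_{1,2}$. Now $\gamma$ and $\gamma'$ intersect, so no pants decomposition contains both and the two quasi-flats cannot coincide. Take a bi-infinite hierarchy path $\sigma$ in $P(V)$, and let $\Q\in\calL_\gamma$, $\Q'\in\calL_{\gamma'}$ be standard quasi-flats one of whose factors is the lift of $\sigma$ (pinching $\bdry V$), the other factor frozen; then $\Q$ contains the line $\ell(m)=R\cup\bdry V\cup\sigma(m)$ and $\Q'$ the line $\ell'(m)=R'\cup\bdry V\cup\sigma(m)$, where $R$ (resp. $R'$) is a fixed pants decomposition of $S\setminus V\cong S_{1,2}$ containing $\gamma$ (resp. $\gamma'$). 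By the distance formula~(\ref{eq:distance}), the only subsurfaces contributing more than $M$ to $d(\ell(m),\ell'(m))$ are those disjoint from $\bdry V$ and from $\sigma(m)$, hence subsurfaces of $S_{1,2}$, on which the two projections are independent of $m$; so $d(\ell(m),\ell'(m))$ is bounded uniformly in $m$, and $\diam(\calN_r(\Q)\cap\calN_r(\Q'))=\infty$. To assemble the proof, given $\Q\in\calL_\gamma$ and $\Q'\in\calL_{\gamma'}$, choose a sequence $\gamma=\gamma_0,\dots,\gamma_N=\gamma'$ as in Lemma~\ref{lem:connected} or Lemma~\ref{lem:chain}; stage (ii) bridges $\calL_{\gamma_i}$ to $\calL_{\gamma_{i+1}}$, and stage (i) connects within each $\calL_{\gamma_i}$, so concatenating yields the desired thick chain with a uniform constant $r$. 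The essential difficulty lies in stage (ii), and particularly in the $S_{2,2}$ case, where one must invoke the distance formula to see that the two leaves genuinely fellow-travel despite the forced intersection of $\gamma$ and $\gamma'$.
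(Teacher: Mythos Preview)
Your proof is correct and follows the same three-stage scheme as the paper: connect within each $\calL_\gamma$; bridge $\calL_\gamma$ to $\calL_{\gamma'}$ for adjacent separating curves; and chain along $\calC_{\rm sep}(S)$ via Lemmas~\ref{lem:connected} and~\ref{lem:chain}.

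Two small remarks on execution. For stage~(i) the paper uses a cleaner device than your ray replacement: it takes a single bi-infinite hierarchy path $\rho_A''$ passing through both $\rho_A(0)$ and $\rho_A'(0)$, so that $\Q_{\rho_A,\rho_B}$ and $\Q_{\rho_A'',\rho_B}$ literally share the line $\{\rho_A(0)\}\times\rho_B(\zed)$, and likewise $\Q_{\rho_A'',\rho_B}$ and $\Q_{\rho_A',\rho_B}$ share $\{\rho_A'(0)\}\times\rho_B(\zed)$---this avoids any fellow-travelling constants and the question of whether your concatenated rays are genuine hierarchy paths. For the $S_{2,2}$ bridge, your distance-formula bound on $d(\ell(m),\ell'(m))$ is correct but depends a priori on the choice of $R,R'$; to get a \emph{uniform} $r$ you should also observe, as the paper does, that the configuration of such a pair $\gamma,\gamma'$ on $S_{2,2}$ is unique up to homeomorphism, so one fixed choice of $R,R'$ (transported by the homeomorphism) works for every pair.
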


\begin{proof}
Consider $S_{g,n}$ where $(g,n) \in \{ (0,7), (0,8),
(1,4), (1,5)\}$.  
The proof of the lemma in these cases follows from following two observations:
\begin{enumerate}
\item Given $\gamma \in \calC_{\rm sep}(S)$, and quasi-flats
$\Q = \Q_{\rho_1,\rho_2}$ and $\Q'= \Q_{\rho_1,\rho_2'}$ in $\calL_\gamma$,
there is a quasi-flat $\Q'' = \Q_{\rho_1,\rho_2''}$ in $\calL_\gamma$ so that 
$$\diam(\Q \cap \Q'')= \infty \ \ \ \text{and} \ \
\ \diam(\Q'' \cap \Q') = \infty.$$
\item Given any disjoint pair $\gamma$ and $\gamma'$ in 
$\calC_{\rm sep}(S)$,  there is a single quasi-flat $\Q$ in $P(S)$
so that $\Q \in \calL_\gamma \cap \calL_{\gamma'}$.
\end{enumerate}

To see the first statement, we need only observe that if $Y$ is the
component of $S \setminus \gamma$ containing $|\rho_2(0)|$ and
$|\rho_2'(0)|$ there is a single bi-infinite hierarchy path
$\rho_2''$ in $P(Y)$ so that  $\rho_2''(0) = \rho_2(0)$ and 
$\rho_2''(j) = \rho_2'(0)$ for some $j$.  Then we have
$$\Q(\zed \times \{0\} ) = \Q''(\zed \times \{0\}) 
\ \ \ \text{and} \ \ \ 
\Q''(\zed \times \{j\} ) = \Q'(\zed \times \{0\}),$$
so each intersection $\Q \cap \Q''$ and $\Q'' \cap \Q'$ has infinite
diameter.

To see the second assertion, observe that for disjoint curves $\gamma$
and $\gamma'$ in $\calC_{\rm sep}(S)$, that determine precisely two
components $Z_1$ and $Z_2$ in $S \setminus \gamma \cup \gamma'$ with
$\zeta(Z_i) \ge 1$ for $i = 1,2$, we may take $\rho_i$ to be a
bi-infinite hierarchy path in $P(Z_i)$, and the quasi-flat
$\Q_{\rho_1,\rho_2}$ lies in $\calL_\gamma \cap \calL_{\gamma'}$,
satisfying the claim.  If $S\setminus \gamma \cup \gamma'$ has three
components each with complexity $1$, there is a third curve $\gamma''
\in \calC_{\rm sep}(S)$ disjoint from $\gamma$ and $\gamma'$ and
separating them, so that
taking $Z_i$ to be the two components of $S \setminus \gamma \cup
\gamma'$ not containing $\gamma''$ and $\rho_1$ and $\rho_2$ as
before, we have a hierarchy path $$\rho_2'(n) = |\rho_2(n)| \cup \gamma''$$
for which the quasi-flat $\Q_{\rho_1,\rho_2'}$ satisfies the claim.

When $S = S_{2,2}$ and $\calC_{\rm sep}(S)$ is not necessarily
connected, we replace condition $2$ with

\begin{enumerate}
\item[$2'$] If $S = S_{2,2}$ and $\gamma$ and $\gamma'$ in
$\calC_{\rm sep}(S)$ intersect in such a way that $S \setminus \gamma
\cup \gamma'$ contains a $4$-holed sphere $Z$, then there is an $r >0$
and quasi-flats
$\Q \in \calL_\gamma$ and $\Q' \in \calL_{\gamma'}$ for which 
$$\diam(\calN_r(\Q) \cap \calN_r(\Q')) = \infty.$$
\end{enumerate}

To see this, let $Y$ be the component of $S \setminus \gamma$ not
containing $Z$ and let $Y'$ be the component of $S \setminus \gamma'$
not containing $Z$.  Let $\rho_1$ be any bi-infinite hierarchy path in
$P(Z)$.  Taking $\rho_2$ in $P(Y)$ and $\rho_2'$ in $P(Y')$ to be
bi-infinite hierarchy paths, we obtain quasi-flats $$\Q =
\Q_{\rho_1,\rho_2} \ \ \ \text{and} \ \ \ 
\Q' = \Q_{\rho_1,\rho_2'}$$
Then letting $r$ be the distance between $\Q(0,0)$ and $\Q'(0,0)$, we
see that $$\diam(\calN_r(\Q(\zed \times \{0\})) \cap
\calN_r(\Q'(\zed \times \{0\}))) = \infty.$$
To prove the claim, we need only observe that the configuration of
$\gamma$ and $\gamma'$ on $S$ is unique up to homeomorphisms of $S$ to
see that $r$ can be taken independently of the pair of curves $\gamma$
and $\gamma'$.

Applying Lemma~\ref{lem:chain}, we may find a sequence of
curves $\{\gamma_i \}$ in $\C_{\rm sep}(S)$ satisfying the conclusions
of the lemma. Conditions $1$, $2$ and $2'$ guarantee that we can
thickly connect quasi-flats in each $\calL_{\gamma_i}$ to join
quasi-flats in $\calL_{\gamma_i}$ that thickly connect to quasi-flats
in $\calL_{\gamma_{i-1}}$ and $\calL_{\gamma_{i+1}}$.  The Proposition
follows.

\end{proof}

In \cite{Behrstock:asymptotic} it was shown that $P(S)$ is not {\em
unconstricted}, which, in particular, guarantees that $P(S)$ cannot be
thick of order zero.  It follows that in the cases above that $P(S)$
is thick of order {\em exactly} 1 (we thank Jason Behrstock for
alerting us to this point).

We now consider the remaining case.
\begin{proposition}
Let $S'=S_{2,1}$.  The pants graph $P(S')$ is thick of order at most $2$.
\end{proposition}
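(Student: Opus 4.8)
The plan is to produce a collection $\calL$ of subsets of $P(S')$, each thick of order at most $1$, which covers $P(S')$ up to bounded distance and all of whose members can be thickly connected; by the definition of thickness this yields thickness of order at most $2$. The reason $S'=S_{2,1}$ falls outside the order-$1$ argument used for the other mid-range surfaces is that it has too few domain-separating curves: every $\gamma\in\calC_{\rm sep}(S')$ must cut $S'$ into a one-holed torus $S_{1,1}$ and a once-punctured one-holed torus $S_{1,2}$, so filling in the puncture carries $\calC_{\rm sep}(S')$ into the set of genus-separating curves of the closed genus-two surface, where no two distinct separating curves are disjoint --- the same phenomenon that forces the detour through Lemma~\ref{lem:chain} when $S=S_{2,2}$. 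Consequently the product regions $X_\gamma$ cannot be chained at a single level, and an extra layer is needed.

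First I would record that each region $X_\gamma\cong P(S_{1,1})\times P(S_{1,2})$ is a product of two unbounded $\delta$-hyperbolic pants graphs, hence thick of order at most $1$: it is the union of its $\zed^2$-quasi-flats $\Q_{\rho_1,\rho_2}$ (one factor a bi-infinite geodesic in the Farey graph $P(S_{1,1})$, the other a bi-infinite hierarchy path in $P(S_{1,2})$), and these are thickly connected to one another exactly as in the proof of Proposition~\ref{prop:thick}. Next I would group the regions: for each isotopy class $\bar\gamma$ of separating curve on the closed genus-two surface, let $Y_{\bar\gamma}$ be a uniform neighborhood of $X_{\gamma^+}\cup X_{\gamma^-}$, where $\gamma^+$ and $\gamma^-$ are the two lifts of $\bar\gamma$ to $S'$, distinguished by the side of $\bar\gamma$ carrying the puncture. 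The curve $\gamma^-$ is obtained from $\gamma^+$ by pushing the puncture across the curve, so $\gamma^+$ and $\gamma^-$ are disjoint and $S'\setminus(\gamma^+\cup\gamma^-)$ has two complexity-one components; hence $X_{\gamma^+}\cap X_{\gamma^-}$ already contains a $\zed^2$-quasi-flat, so it has infinite diameter and thickly connects the two pieces, making $Y_{\bar\gamma}$ thick of order at most $1$. To see that the $Y_{\bar\gamma}$ cover $P(S')$ up to bounded distance I would prove the analogue of Lemma~\ref{lem:close}: every pants decomposition of $S'$ lies within distance $1$ of one containing a domain-separating curve, by a short case analysis in which, from a pants decomposition with no handle-separating curve, a single elementary move on a suitably chosen curve produces one.

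The remaining step --- and the one I expect to be the main obstacle --- is to thickly connect two regions $Y_{\bar\gamma}$ and $Y_{\bar\gamma'}$. Here I would use exactly the combinatorial input of Lemma~\ref{lem:chain}: by the theorem of Schleimer (cf. Putman), any two separating curves of the closed genus-two surface are joined by a chain $\bar\gamma=\bar\gamma_1,\dots,\bar\gamma_N=\bar\gamma'$ of separating curves in which successive curves intersect minimally, in four points, and so have a complementary annulus. For each consecutive pair $\bar\gamma_i,\bar\gamma_{i+1}$ I would place the puncture of $S'$ in that complementary annulus and, exploiting this configuration, produce an infinite-diameter family of pants decompositions each lying within a uniform distance of a product region in $Y_{\bar\gamma_i}$ and of a product region in $Y_{\bar\gamma_{i+1}}$, thereby thickly connecting the two. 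The delicate point --- the analogue here of the condition labelled $2'$ in the proof of Proposition~\ref{prop:thick} --- is that across the chain the handle-separating curves genuinely intersect, unlike the disjoint pairs inside a single $Y_{\bar\gamma}$, so this shared infinite family has to be extracted from the way $\bar\gamma_i$ and $\bar\gamma_{i+1}$ lie on the surface rather than from an honest product splitting of either region. Granting this, concatenation along the chain thickly connects $Y_{\bar\gamma}$ to $Y_{\bar\gamma'}$, and the collection $\calL=\{Y_{\bar\gamma}\}$ witnesses that $P(S_{2,1})$ is thick of order at most $2$.
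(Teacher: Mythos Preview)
There is a genuine gap, stemming from a misconception about the forgetful map $\Pi\colon\calC(S')\to\calC(S)$. The fiber $\Pi^{-1}(\bar\gamma)$ over a separating curve of the closed surface is not a two-element set; it is infinite, being an orbit of the point-pushing subgroup $\pi_1(S,x)\hookrightarrow\text{Mod}(S')$ from the Birman exact sequence. Your regions $Y_{\bar\gamma}$, built from only two chosen lifts $\gamma^{\pm}$, therefore do not cover $P(S')$ up to bounded distance: a pants decomposition containing some other $\gamma'\in\Pi^{-1}(\bar\gamma)$ can lie arbitrarily far in $P(S')$ from every $X_{\delta^{\pm}}$, hence from every $Y_{\bar\delta}$. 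The paper instead defines $\widehat X_{\bar\gamma}=\bigcup_{\gamma'\in\Pi^{-1}(\bar\gamma)}X_{\gamma'}$ over the full fiber, and invokes Schleimer's theorem that this fiber is connected (as a subgraph of $\calC(S')$) to chain quasi-flats across adjacent lifts and conclude that each $\widehat X_{\bar\gamma}$ is thick of order at most~$1$.

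Your thick-connecting step also breaks down, and not only because the lifts produced by ``placing the puncture in the annulus'' need not be the $\gamma_i^{\pm}$ you already fixed. Once the puncture sits in the complementary annulus of $\bar\gamma_i\cup\bar\gamma_{i+1}$, that annulus becomes a three-holed sphere in $S'$, so there is no complexity-$1$ subsurface left over to carry a shared infinite quasi-flat direction as in condition~$2'$ for $S_{2,2}$. The paper supplies the missing infinite-diameter witness by a different mechanism: choose nearby pants decompositions $P_i\ni\gamma_i'$ along the chain, take a pseudo-Anosov $\phi$ in the point-pushing subgroup (such exist by Kra's theorem applied to a filling loop), and use that $\phi$ acts isometrically on $P(S')$ while preserving every fiber $\Pi^{-1}(\bar\gamma_i)$. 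Then $\{\phi^n(P_i)\}_n$ is an infinite-diameter subset of $\widehat X_{\bar\gamma_i}$ with $d(\phi^n(P_i),\phi^n(P_{i+1}))=d(P_i,P_{i+1})$ bounded, thickly connecting $\widehat X_{\bar\gamma_i}$ to $\widehat X_{\bar\gamma_{i+1}}$.
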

\begin{proof}
Let $x$ denote the puncture on $S'$ and let $S$ be the closed surface
of genus $2$ obtained by adding $x$ to $S'$.  It is well-known that
there exists an injective homomorphism $$\pi_1(S,x)\to
\text{Mod}(S')$$ obtained by ``pushing $x$ around a loop.''  By a
theorem of Kra (see \cite{Kra:filling}), if $\gamma\in \pi_1(S,x)$ has positive
geometric intersection with every essential non-peripheral simple
closed curve ($\gamma$ is {\em filling}), then its image under this
homomorphism is a pseudo-Anosov diffeomorphism.  Let $G$ the image of
$\pi_1(S,x)$.

Filling in the puncture also induces a map $$\Pi:\C(S')\to \C(S).$$
The action of $G$ preserves each fiber $\Pi^{-1}(\alpha); \alpha\in
\C(S)$.  It is known \cite{Schleimer:notes} (Proposition 4.3) that the
fibers $\Pi^{-1}(\alpha)$ are connected.  Let given $\gamma \in
\calC(S)$, let $\widehat X_\gamma$ be the collection of pants
decompositions in $P(S')$ so that each $P \in \widehat X_\gamma$
contains some $\gamma'	\in \Pi^{-1}(\gamma)$.

\begin{lemma}
Let $\gamma\in \C_{\rm sep}(S)$.  Then $\widehat X_\gamma$
is thick of order at most $1$ and the collection $\calL$ of all
$\widehat X_\gamma$ for $\gamma\in \C_{\rm sep}(S)$ is uniformly thick.
\end{lemma}
\begin{proof}
Each $\gamma'\in \Pi^{-1}(\gamma)$ divides the surface $S'$ into
subsurfaces $Y_1$ and $Y_2$ with
$\zeta(Y_i) \ge 1$.  Taking hierarchy paths $\rho_i$ in $P(Y_i)$ and
quasi-flats $\Q_{\rho_1,\rho_2}$ as before, connectedness of the fiber
allows us to argue using conditions $1$ and $2$ from the previous
Proposition 
that for any two curves $\gamma'$ and $\gamma''$ in the fiber
$\Pi^{-1}(\gamma)$, quasi-flats in $\calL_{\gamma'}$ and
$\calL_{\gamma''}$ can be thickly connected within $\widehat
X_\gamma$.

Since each $P \in \widehat X_\gamma$ lies in some quasi-flat of this
form, we have $\widehat X_\gamma$ 
is thick of order $1$.  Since the constants do not depend on $\gamma$,
the union 
$\calL$ is a collection of uniformly thick subsets of $P(S')$.
\end{proof}

We now conclude the proof that $P(S')$ is thick of order at most $2$.

Exactly as in the case of $S_{2,2}$, any point in $P(S')$ is within
distance at $1$ of a point in $\calL$. Now we show that any two
elements of $\calL$ can be thickly connected. Given $\gamma_1$ and
$\gamma_N$ in $\calC_{\rm sep}(S')$, we join them by a sequence
$\gamma_1,\ldots, \gamma_N$ in $\calC_{\rm sep}(S')$ where successive
curves are either disjoint or intersect minimally. There is a
uniform constant $C'$ and pants decompositions, $P_i$ that contain
$\gamma_i$, such that $$d(P_i,P_{i+1})\leq C'.$$ 

Now let $\phi$ be a
pseudo-Anosov element in $G$.
The orbit $$\{\phi^n(P_i)\}_{n=-\infty}^{\infty}$$ is an
infinite diameter subset of $\widehat X_{\gamma_i}$.  As $\phi$ acts
isometrically on $P(S')$, we have
\begin{eqnarray*}
d(\phi^n(P_i),\phi^n(P_{i+1})) &=& d(P_i,P_{i+1}) \\ 
&\leq& C'.
\end{eqnarray*}
This guarantees that $\widehat X_{\gamma_1}$ and $\widehat
X_{\gamma_N}$ can be thickly connected.

\end{proof}

\begin{proof}  [Proof of Theorem~\ref{thm:thickpantsgraph}]

By Theorem~\ref{thm:thick} (for $\zeta(S) = 4$ and $5$) 
and \cite[Cor. 7.9]{Behrstock:Drutu:Mosher:thick} the pants graph cannot be
{\em asymptotically tree-graded} for each $S$ with $\zeta(S) \ge 4$.

The theorem then follows immediatedly from
the equivalence of strong relative-hyperbolicity with the condition
that a metric space is asymptotically tree-graded (see
\cite[Thm. 4.1]{Drutu:Sapir:relative}).


\end{proof}

\bold{Remark:} It is interesting to note that the proof of
Theorem~\ref{thm:thickpantsgraph} does not show that $P(S_{2,1})$ is
thick of order exactly 2.  It would be interesting to know whether
$P(S_{2,1})$ presents such a special case.

\bibliographystyle{math}
\bibliography{math}

\noindent{\scriptsize \sc Department of Mathematics, Brown University, Providence, RI 02912}

\noindent{\scriptsize \sc Department of Mathematics, UIC, 
Chicago, IL 60607
}

\end{document}